      \numberwithin{equation}{section}
      \theoremstyle{plain}
      \newtheorem{theorem}{Theorem}[section]
            \newtheorem{thm}[theorem]{Theorem}
      \newtheorem{lemma}[theorem]{Lemma}
      \newtheorem{corollary}[theorem]{Corollary}
      \newtheorem{proposition}[theorem]{Proposition}
      \newtheorem{prop}[theorem]{Proposition}
      \theoremstyle{definition}
      \theoremstyle{remark}
      \newtheorem{remark}[theorem]{Remark}
\renewcommand{\P}{\mathbb P}
\newcommand{\R}{\mathbb R}
\newcommand{\E}{\mathbb E}
\newcommand{\Z}{\mathbb Z}
\newcommand{\V}{\mathcal{V}}
\newcommand{\lr}[4]{#3\xleftrightarrow[#1]{#2} #4}
     \newcommand{\nlr}[4]{#3\mathrel{\mathop{\centernot\longleftrightarrow}_{#1}^{#2}} #4}
\newcommand\couprad{10} 
\newcommand\rangeofdep{40}
\newcommand\pivrange{\the\numexpr\couprad + \rangeofdep\relax}
\titleformat{\subsection}[runin]{\normalfont\bfseries}{\thesubsection.}{.5em}{}[.]\titlespacing{\subsection}{0pt}{2ex plus .1ex minus .2ex}{.8em}
\titleformat{\subsubsection}[runin]{\normalfont\bfseries}{\thesubsubsection.}{.5em}{}[.]
\titlespacing{\subsubsection}{0pt}{2ex plus .1ex minus .2ex}{.8em}
\definecolor{Red}{rgb}{1,0,0}
\definecolor{Blue}{rgb}{0,0,1}
\definecolor{Yarok}{rgb}{0,0.5,0}
\title{{\textbf{\normalsize{SHARP 
CONNECTIVITY BOUNDS FOR THE %
VACANT SET OF   \\[-0.5em] RANDOM INTERLACEMENTS}}}}
\date{}
\begin{document}
\thispagestyle{empty}
\maketitle
\vspace{0.1cm}
\begin{center}
\vspace{-1.7cm}
Subhajit Goswami$^1$, Pierre-Fran\c{c}ois Rodriguez$^2$ and Yuriy Shulzhenko$^3$

\end{center}
\vspace{0.1cm}
\begin{abstract}
We consider percolation of the vacant set of random interlacements at intensity $u$ in dimensions three and higher, and derive lower bounds on the truncated two-point 
function for all values of $u>0$. These bounds are sharp up to principal exponential order for all $u$ in dimension three and all $u \neq u_\ast$ in higher 
dimensions, where $u_*$ refers to the critical parameter of the model, and they match the upper bounds derived in the article \cite{GRS24.1}. In dimension three, our results further imply that the truncated two-point function grows at large distances $x$ at a rate that 
depends on $x$ only through its {Euclidean} norm, which offers a glimpse of the expected (Euclidean) invariance of the scaling limit at criticality.
The decay rate is atypical, it incurs a logarithmic correction and comes with an explicit pre-factor that converges to $0$ as the parameter $u$ approaches 
the critical point $u_*$ from either side. A particular challenge stems from the combined effects of lack of monotonicity due to the 
{truncation} in the super-critical phase, and the precise ({rotationally invariant}) controls we seek, that measure the effects of a certain ``harmonic humpback'' function. Among others, their derivation relies on rather fine estimates for 
hitting probabilities of the random walk in arbitrary direction $e$, which witness this invariance at the discrete level, and preclude straightforward applications of 
projection arguments. 
\end{abstract}

\vspace{6.0 cm}

\begin{flushleft}
\thispagestyle{empty}
\vspace{0.1 cm}
{\footnotesize
\noindent\rule{6cm}{0.35pt} \hfill {\normalsize June 2026} \\[2em]

\begin{multicols}{2}
\small
 $^1$School of Mathematics\\
 Tata Institute of Fundamental Research\\
 1, Homi Bhabha Road\\
 Colaba, Mumbai 400005, India. \\ \url{goswami@math.tifr.res.in} \columnbreak
 
\hfill $^2$ Center for Mathematical Sciences\\
\hfill University of Cambridge\\
\hfill Wilberforce Road\\
\hfill Cambridge CB3 0WB, UK\\
\hfill\url{pfr26@cam.ac.uk}\\[2em]
\hfill$^3$Imperial College London\\
\hfill Department of Mathematics\\
\hfill 180 Queen's Gate\\
\hfill London SW7 2AZ, UK\\
\hfill \url{yuriy.shulzhenko16@imperial.ac.uk}

\end{multicols}
}
\end{flushleft}

\newpage

\setcounter{page}{1}

\section{Introduction}\label{Sec:intro}
In this article, we investigate the vacant set of random interlacements $(\mathcal V^u)_{u > 0}$ on $\Z^d$ in 
dimensions $d \geq3$ and its percolative properties. As shown in the successive works 
\cite{MR2680403,MR2512613,MR2498684}, the random set 
$\mathcal{V}^u$, which is translation invariant and 
decreasing in $u$ (by means of a suitable coupling of the family $(\mathcal V^u)_{u > 0}$, see \cite{MR2680403}), undergoes a percolation phase transition across a non-degenerate 
threshold $u_*=u_*(d) \in (0,\infty)$ for all $d \geq 3$. This transition entails that whenever 
$u<u_*$, there exists a unique infinite cluster in $\mathcal{V}^u$ with probability one. In  
contrast, for all $u> u_*$ the connected components (clusters) of $\mathcal{V}^u$ are finite 
almost surely.  

Our focus in this article is on the so-called {truncated two-point 
 function} of $(\mathcal V^u)_{u > 0}$, 
\begin{equation}\label{def:truncated_twopt}
\tau_{u}^{{\rm tr}}(x, y) \stackrel{\text{def.}}{=} \P[\lr{}{\mathcal V^u}{x}{y}, \nlr{}{\mathcal V^u}{x}{\infty}],\, \mbox{for $x, y 
\in \Z^d$ and $u \in (0, \infty)$},
\end{equation} 
where, with hopefully obvious notation $\lr{}{C}{A}{B}$ refers to the existence of a nearest-neighbor path in $C$ connecting $A$ and $B$, and the symbol $\nlr{}{}{}{}$ to its complement. The function $\tau_{u}^{{\rm tr}}$ is symmetric, i.e.~$\tau_{u}^{{\rm tr}}(x, y) =\tau_{u}^{{\rm tr}}(y, x) $, and $\tau_{u}^{{\rm tr}}(x, 
 y)= \tau_{u}^{{\rm tr}}(0, y-x) \equiv \tau_{u}^{{\rm tr}}(y-x)$ for all $x,y \in \Z^d$ by translation invariance of 
 $\mathcal{V}^u$. When $u>u_*$ the `truncation' event that $x$ belongs to a finite component of $\mathcal{V}^u$ has probability one and can be safely omitted, so that 
$\tau_{u}^{{\rm tr}}(x)=\tau_u(x)$, the usual two-point 
function, i.e.~$\tau_u(x)$ refers to the probability to connect $0$ and $x$ in  $\mathcal V^u$.  When $u<u_*$ however, by the FKG-inequality, which holds~\cite{MR2525105}, one has that $\tau_u(x) \geq \P[\lr{}{\mathcal V^u}{0}{\infty}]^2 >0$, 
which does not decay at all, and the presence of the disconnection event in \eqref{def:truncated_twopt} is crucial for $\tau_{u}^{{\rm tr}}$ to be small at values of $u<u_*$.

Our aim in this article is to determine sharp asymptotics for $\tau_{u}^{{\rm tr}}(x)$ 
at large distances $|x|$ for any value $u \neq u_*$; the hastened reader is referred to Corollary~\ref{cor:sharpB} below. This question has already instigated a number of previous works, which we now briefly summarize. Deep in the sub-critical regime, when $u \gg 1$, polynomial upper bounds on $\tau_u(\cdot)$ follow from the results of~\cite{MR2680403}, and were first improved to stretched exponential bounds in \cite{MR2744881} for all values $u \geq u_{**}$ with $u_{**} \geq u_*$ an auxiliary parameter introduced in \cite{sznitman2009}. Later, in \cite{PopTeix}, $\tau_u(\cdot)$ was proved  to decay exponentially for all $u \geq u_{**}$ in dimensions four and higher, with (sub-optimal) logarithmic corrections in dimension three. The logarithmic correction for $u>u_{**}$ was improved much more recently in \cite{prevost2023passage}, and this improvement is sharp on account of the results of the present article. The findings of \cite{prevost2023passage} also rely on certain fine estimates developed in \cite{MR3602841} in a different context, and are inspired by corresponding results for the Gaussian free field \cite{gosrodsev2021radius,MR3417515}. As a consequence of the sharpness result  \cite{RI-I} (see in particular (1.21) therein), drawing upon the companion 
articles \cite{RI-II, RI-III}, the equality $u_*=u_{**}$ was established, by which all previous results now hold throughout the sub-critical phase $u>u_*$.

Owing to the presence of the truncation in \eqref{def:truncated_twopt}, the super-critical phase $u< u_*$ is even more intricate -- indeed it will also be the main focus of this paper. For $u \ll 1$ and $d \geq 5$, stretched exponential bounds for $\tau_{u}^{{\rm tr}}$ follow from the results of \cite{Tei11}. This was extended to all dimensions $d \geq 3$ and $u \ll 1$ in \cite{MR3269990}. Until recently, the closest to pushing these results to all values of $u< u_*$ are the results of \cite[Theorem~1.2]{RI-I}, which together with the disconnection estimate in \cite{MR2498684} 
yield a stretched exponential bound on a related but {\em strictly smaller} quantity than $\tau_u^{{\rm tr}}$, where the 
disconnection in \eqref{def:truncated_twopt} happens at an intensity $v < u$. The sprinkling is a significant drawback. In very recent work \cite{GRS24.1} we managed to remove this sprinkling, i.e.~set $u=v$ to obtain upper bounds on $\tau_{u}^{{\rm tr}}$ for all values $u<u_*$. The main 
object of this paper is to derive matching lower bounds for $\tau_{u}^{{\rm tr}}$, thus ascertaining the sharpness of results of \cite{GRS24.1}. In the sequel, 
we write $|\cdot|$ for the Euclidean distance on $\Z^d$.
 \begin{thm}\label{T:LB-RI}
 For all $u \in (0, \infty)$,
 \begin{equation}\label{eq:LB_RI-1}
\liminf_{|x | \to \infty}\frac{\log |x |}{|x|}\log \tau_u^{{\rm tr}}(x) \geq  -\frac{\pi}{3}(\sqrt{u} - 
\sqrt{u_*})^2, \quad d=3.
\end{equation}
When $d  \ge 4$, for all $u \in (0,\infty)$, there exists $C = C(u, d) \in (0, \infty)$ such that for all $x \in \Z^d$,
\begin{equation}\label{eq:LB_RI-2}
 \log \tau_u^{{\rm tr}}(x) \geq -C|x|.
\end{equation}
 \end{thm}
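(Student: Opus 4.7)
The plan is to obtain the lower bound by constructing explicit events realizing the claimed rates. The basic template is: force a vacant tube from $0$ to $x$ ensuring connection, and when $u<u_*$ further force a surrounding shell to lie in $\mathcal I^u$, ensuring disconnection of $0$ from $\infty$.

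\textbf{Dimensions $d\geq 4$.} I would fix a monotone lattice path $\gamma$ of length $O(|x|)$ from $0$ to $x$ and consider the fat tube $T = \bigcup_{y \in \gamma}B(y, r_0)$ of constant radius $r_0 \geq 1$. Then $\cp(T) \leq C|x|$, and by the standard identity $\P[T \subset \mathcal V^u]=\exp(-u\,\cp(T))\geq \exp(-C'|x|)$, which directly yields \eqref{eq:LB_RI-2} for $u > u_*$ since $\tau_u^{\mathrm{tr}} = \tau_u$ there. For $u<u_*$, I would additionally fix a thin surrounding shell $\mathcal S \subset \partial T$ of size $O(|x|)$ and intersect with $\{\mathcal S \subset \mathcal I^u\}$; applying FKG to the increasing (in the interlacement process) events $\{y \in \mathcal I^u\}$, $y \in \mathcal S$,
\begin{equation*}
\P[\mathcal S \subset \mathcal I^u] \geq \prod_{y \in \mathcal S}\P[y \in \mathcal I^u] \geq e^{-C|x|},
\end{equation*}
and on the resulting intersection event one has $0 \leftrightarrow x$ inside $T$ while $0 \not\leftrightarrow \infty$, giving \eqref{eq:LB_RI-2}.

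\textbf{Dimension $d=3$.} The sharp exponent $\tfrac{\pi}{3}(\sqrt u - \sqrt{u_*})^2$ is characteristic of Gaussian-type behavior, which is consistent with the isomorphism identities between random interlacements and the Gaussian free field. In the \emph{super-critical} regime $u<u_*$, the dominant cost comes from disconnection: I would fix an enclosure $\Omega\supset \{0,x\}$ of diameter $\asymp |x|$ and force $\Omega \not\leftrightarrow \infty$ in $\mathcal V^u$. The sharp lower bound on such a disconnection probability is obtained via a change-of-measure tilting the local interlacement intensity from $u$ up to $u_*$ along an optimal ``harmonic humpback'' profile; the resulting variational problem evaluates to $\exp\bigl(-(\tfrac{\pi}{3}+o(1))(\sqrt{u_*}-\sqrt u)^2|x|/\log|x|\bigr)$, while the connection event inside $\Omega$ contributes only sub-exponentially. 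In the \emph{sub-critical} regime $u>u_*$, the dual construction applies: force a humpback tube $H$ of optimal profile between $0$ and $x$ to be contained in $\mathcal V^u$, combining $\{H \subset \mathcal V^u\}$ with a suitable internal critical-type connection event and optimizing the tube profile; the variational trade-off between tube capacity and internal connection probability yields the same rate $\tfrac{\pi}{3}(\sqrt u - \sqrt{u_*})^2$.

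\textbf{Main obstacle.} The principal difficulty is the sharp rotationally invariant constant $\pi/3$, which depends on $x$ only through its Euclidean norm $|x|$. Standard projection arguments onto coordinate axes yield only rates involving $|x|_\infty$ or direction-dependent quantities; obtaining the genuinely Euclidean rate demands refined hitting-probability estimates for the simple random walk in arbitrary directions, as stressed in the abstract. Moreover, combining the connection and disconnection constructions in the super-critical regime without loss of leading-order factors (given the lack of monotonicity of $\tau_u^{\mathrm{tr}}$ under truncation, and the precise rotationally invariant controls sought) requires a careful compatibility check, and matching the upper bounds from \cite{GRS24.1} hinges on verifying that the variational optima on both sides of $u_*$ align exactly.
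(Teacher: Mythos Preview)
Your high-level intuition is sound, but there are two concrete gaps that prevent the argument from going through as written.

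\medskip

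\textbf{The $d\geq 4$ case.} Your combination of $\{T\subset\mathcal V^u\}$ with $\{\mathcal S\subset\mathcal I^u\}$ cannot be handled by FKG: the first event is decreasing in the interlacement configuration, the second is increasing, so FKG gives nothing for their intersection. (Also, as written $\mathcal S\subset\partial T\subset T$, which would make the two events incompatible.) The paper avoids this by a different mechanism: it takes the path $\ell$ from $0$ to $x$, sets $\Sigma=\partial B(\ell,1)$, and conditions on \emph{exactly one} interlacement trajectory with label $\le u$ hitting $\Sigma$. The cost of this is $\geq c\,u\,e^{-Cu|x|}$. One then forces that single trajectory to cover $\Sigma$ while avoiding $\ell$ and then escape to infinity without returning; each of these steps costs at most $e^{-C|x|}$ (the last uses $d\geq 4$ so that $\cp(\Sigma)\leq C|x|$ and the return probability from distance $|x|$ is $O(|x|^{3-d})\to 0$). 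This realises connection and disconnection in one stroke without any positive-association argument.

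\medskip

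\textbf{The $d=3$ case.} You describe the tilt as pushing the intensity from $u$ to $u_*$ in order to make disconnection typical, and then say connection ``contributes only sub-exponentially''. This is where the argument breaks. Under a tilt that merely makes $\{0\not\leftrightarrow\infty\}$ typical, the connection $\{0\leftrightarrow x\}$ is \emph{not} automatically typical, and you cannot append it afterwards by FKG because the joint event is non-monotone. The paper's tilt is genuinely two-sided: the humpback profile $f$ has \emph{two} plateaux, pushing the effective level to $u_*-\varepsilon$ inside a thin corridor $T^1$ (so connection becomes typical there) and to $u_*+\varepsilon$ in a surrounding shell $T^5\setminus T^3$ (so disconnection becomes typical). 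Both events are then shown to hold with probability $\to 1$ under the single tilted law $\widetilde{\P}_f$, and the entropy cost of $f$ is what produces $\tfrac{\pi}{3}(\sqrt u-\sqrt{u_*})^2$. A further point you do not address: the tilted construction only yields $B(0,r_1)\leftrightarrow B(x,r_1)$ in $\mathcal V^u$, not $0\leftrightarrow x$. Upgrading to the latter while \emph{preserving} disconnection cannot be done by FKG (again non-monotonicity), and the paper handles it via a conditional ``sprinkled finite energy'' device imported from \cite{RI-II}, at multiplicative cost $e^{-Cr_1^{2d}}$ which is sub-leading.

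\medskip

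Your identification of the rotational-invariance difficulty is correct and matches the paper's motivation for the martingale estimates in Section~\ref{sec:oblique}.
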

Combined with Theorem~1.4 in the companion article \cite{GRS24.1}, this implies
\begin{corollary}\label{cor:sharpB}
For all $u \in (0, \infty)$, 
 \begin{equation}\label{eq:SB_RI-1}
\lim_{|x | \to \infty}\frac{\log |x |}{|x|}\log \tau_u^{{\rm tr}}(x) =  -\frac{\pi}{3}(\sqrt{u} - 
\sqrt{u_*})^2, \quad d=3.
\end{equation}
When $d  \ge 4$, for all $u \neq u_*$, there exist $C = C(u, d)$, $c = c(u, d)$ in $(0, \infty)$ such that for all $x \in \Z^d$,
\begin{equation}\label{eq:SB_RI-2}
-c \, |x| \ge  \log \tau_u^{{\rm tr}}(x) \geq - C \, |x|.
\end{equation}
\end{corollary}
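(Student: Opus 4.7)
The plan is to obtain the corollary by simply pairing the lower bound established in Theorem~\ref{T:LB-RI} above with the matching upper bound supplied by Theorem~1.4 of the companion article~\cite{GRS24.1}. The combination is a bookkeeping step; I do not expect any technical obstacle beyond what is already contained in those two ingredients, so the ``hard part'' of the corollary is really the proof of Theorem~\ref{T:LB-RI} carried out in the body of this paper.

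For the three-dimensional identity \eqref{eq:SB_RI-1}, the bound \eqref{eq:LB_RI-1} immediately supplies the $\liminf$ half. To produce the complementary $\limsup$ inequality
\[
\limsup_{|x| \to \infty} \frac{\log |x|}{|x|} \log \tau_u^{\mathrm{tr}}(x) \leq -\frac{\pi}{3}\bigl(\sqrt{u} - \sqrt{u_*}\bigr)^2,
\]
I would quote the three-dimensional part of \cite[Theorem~1.4]{GRS24.1}. The two together give equality of the limit for every $u \in (0, \infty)$. It is worth flagging the degenerate case $u = u_*$, where both sides vanish: the statement then reduces to $\log \tau_{u_*}^{\mathrm{tr}}(x) = o(|x|/\log |x|)$, which is consistent with (and weaker than) the polynomial decay anticipated at criticality in $d = 3$, so no extra argument is required to cover this endpoint.

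For $d \geq 4$, the right-hand (lower) inequality in \eqref{eq:SB_RI-2} is \eqref{eq:LB_RI-2} verbatim, and is valid throughout $(0, \infty)$. The complementary upper bound $\log \tau_u^{\mathrm{tr}}(x) \leq -c\, |x|$ is read off the high-dimensional part of \cite[Theorem~1.4]{GRS24.1}, which furnishes exactly this exponential estimate for every $u \neq u_*$. The explicit exclusion of the critical value in the statement of the corollary reflects a genuine feature of the upper bound in dimension $d \geq 4$---exponential decay is not expected at criticality---rather than any limitation of Theorem~\ref{T:LB-RI}, which is unrestricted in $u$.
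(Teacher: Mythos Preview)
Your proposal is correct and matches the paper's approach exactly: the paper simply states that the corollary follows by combining Theorem~\ref{T:LB-RI} with \cite[Theorem~1.4]{GRS24.1}, without writing out any further argument. Your additional remarks on the critical case $u=u_*$ are accurate and helpful but go beyond what the paper itself records.
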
 

 We now make a few comments about Theorem~\ref{T:LB-RI}, focussing entirely on \eqref{eq:LB_RI-1} in the super-critical regime $u<u_*$, which is the main interest of this article. The other results are obtained via simplified versions of these arguments. The methods  developed in this article can in fact be adapted to
prove an analogue of Corollary~\ref{cor:sharpB} for excursion sets of the discrete Gaussian free field (GFF) on $\Z^3$, as conjectured in \cite[(1.11)]{gosrodsev2021radius}. 

There are two main obstacles to proving \eqref{eq:LB_RI-1}, and, a fortiori, \eqref{eq:SB_RI-1}. One issue is the non-monotonic nature of $u  \mapsto \tau_u^{{\rm tr}}(x)$. The other issue is the Euclidean invariance of the lower bound on the right-hand side of \eqref{eq:LB_RI-1}. A result akin to Corollary~\ref{cor:sharpB} is known to hold for the metric-graph GFF in three dimensions, see \cite[Corollary 1.3]{drewitz2023arm} and combine with \cite{arXiv:2405.17417} or \cite{arXiv:2406.02397} to remove the $\log \log$ correction. However, the combination of the above two issues can be avoided for this model because bounded metric-graph GFF clusters on either side of the critical point have the {\em same} distribution, see  \cite{zbMATH07529630}, so it is effectively enough to deal with the sub-critical problem, corresponding to $u>u_*$; see also \cite{MR4749810,prevost2023passage} for further results of this flavour, all in sub-critical regimes.
 
As we explain in more detail below, the non-monotonicity will be dealt with by extending a tilting technique pioneered by Li and Sznitman \cite{zbMATH06797082,zbMATH06257634}, who considered the (monotone) disconnection problem. The extension will consist of also keeping track of the time parameter $u$ associated to trajectories, rather than just the trajectories themselves, which is carefully modulated but in opposite directions in distinct spatial regions (cf.~the discussion around \eqref{eq:LU-intro2}), so as to ``identify'' (via coupling arguments) regions with different effective values of $u$ after tilting (again, cf.~\eqref{eq:LU-intro2} below). The carefully designed profile function $f$ used to spatially modulate the level $u$ is depicted in Fig.~\ref{fig:profile}. It looks like a ``harmonic humpback'' when $u<u_*$; the simplifications incurred when $u>u_*$ are mirrored by the fact that $f$ becomes monotone away from the middle plateau in this regime.

For the purposes of producing rotationally invariant bounds, the definitions of the relevant spatial regions will involve discretisations of {\em oblique} `corridors' (around $x$ and $y$), symmetric around $e=\tfrac{x-y}{|x-y|}$, in which the connection between $x$ and $y$ appearing in \eqref{def:truncated_twopt} can be made sufficiently cost-efficient. The implementation of this strategy will require rather delicate (pointwise) estimates on hitting and escape probabilities for such sets. We deal with these issues separately in Section~\ref{sec:oblique}. Our approach yields robust arguments that involve certain (non-standard) martingales, cf.~Fig.~\ref{fig:mart} and the proof of 
 Proposition~\ref{L:hit}. These are of independent interest.
 
 \medskip

 \begin{figure}[htb!]
 		 \centering
 		\includegraphics[width=\linewidth]{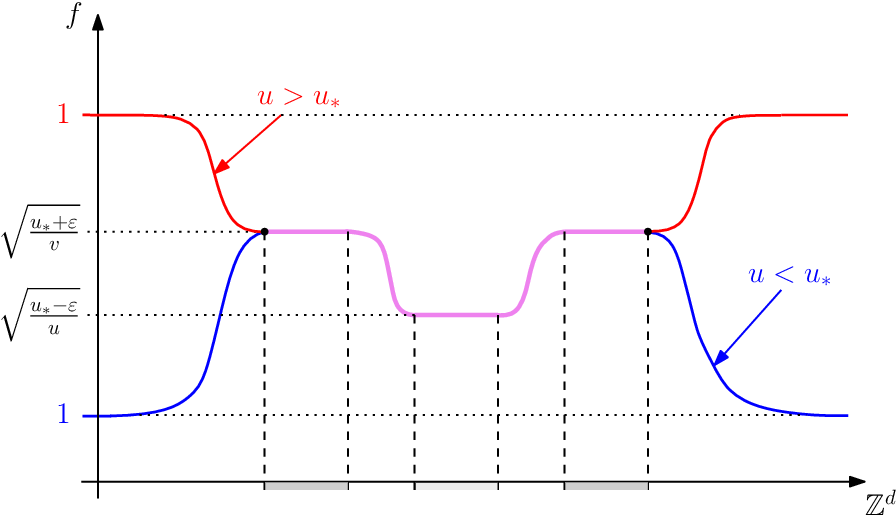}
 		\caption{{\bf The ``harmonic humpback'' function $f$.} The purple part is joint to both the red and blue curve, which distinguish sub-critical (red) and super-critical (blue) regimes. The argument of $f$ is a {ratio} of intensities, and the value $f=1$ in red or blue corresponds to the critical point $u_*$ in each regime. In reality, each curve depicts a section orthogonal to the direction $e \in S^{d-1}$, which is arbitrary. The plateaux correspond to spatial regions (in shaded grey) at which trajectories arrive at roughly constant (but suitably modulated) intensity. For practical purposes the reader should think of $v \approx u$.}
 		\label{fig:profile}
 	\end{figure}

 We now highlight some aspects of the proof of Theorem~\ref{T:LB-RI}, and provide a guide to how subsequent sections are organized.
 Theorem~\ref{T:LB-RI} will follow from a more general result, Theorem~\ref{T:LB-gen}, which yields a lower bound on the probability of a class of events $A^u(x,\delta)$, that, roughly 
 speaking, allow to connect the $|x|^{\delta}$-neighborhoods of $0$ and $x \in \mathbb{Z}^d$ by a sufficiently 
 ``straight'' cluster (i.e.~efficiently in diameter) in the vacant set $\mathcal{V}^u$, which is 
 isolated from infinity (eventually, $\delta \downarrow 0$). The event $A^u(x,\delta)$ is in fact 
 more involved, it also entails the flexibility to perform local surgeries near $0$ and $x$ 
 (borrowing a technique recently introduced in \cite{RI-II}), which normally would follow by 
 application of the FKG-inequality, which however fails to apply when the event in question is 
 not monotone.

 Theorem~\ref{T:LB-gen} has a lot of mileage, in allowing us to deduce 
 Theorem
 ~\ref{T:LB-RI}, along with other interesting results, among 
 which, bounds on the so-called truncated one-arm events, see Remark~\ref{R:1-arm}.
 Another case in point is the following `local uniqueness' event $\textnormal{LU}$ and variants thereof, which play a prominent role in the proof: for $u,v,\varepsilon > 0$, with $B_r$ denoting the  ball of radius $r>0$ around $0$, let
  \begin{equation}\label{eq:LU-intro}
  \text{LU}_{r,\varepsilon}^{u,v} \text{ ``$=$'' } \left\{\!\begin{array}{r} (B_{(1+\varepsilon)r} \setminus B_{r/2}) \cap \mathcal{V}^{u}  \text{ has at least one crossing component} , \\  (B_{r} \setminus B_{r/2})\cap \mathcal{V}^{v} \text{ has at most one crossing component} \end{array} \right\},
  \end{equation}
  where ``crossing component'' refers to a connected set intersecting both boundary components of the annulus in question. The previous display should be read with a grain of salt (see around \eqref{eq:typical-22} for the precise definition). The problem with \eqref{eq:LU-intro} as such is that the event $\text{LU}_{r,\varepsilon}^{u,v}$ is not monotone in the configuration $\mathcal{V}^{v}$. For the purpose of this introduction, it will be sufficient to have \eqref{eq:LU-intro} in mind when referring to $\text{LU}_{r,\varepsilon}^{u,v}$ in the sequel.
Events of this type 
are 
instrumental in 
understanding the properties of $\mathcal V^u$ in the super-critical phase; see, 
e.g.~\cite{MR3602841, https://doi.org/10.48550/arxiv.2105.12110, zbMATH07226362, RI-I, 
RI-II, li2024sharp}.

The study of  $\text{LU}_{r,\varepsilon}^{u,v}$ for $v<u$ (and in fact the more general event considered in Theorem~\ref{T:LB-gen})
proceeds through a change of probability method, that involves a carefully designed tilt 
$\widetilde{\P}_f$ of the canonical law $\mathbb{P}$ of the interlacement point process. As 
within classical large deviation theory, the gist is for the change of measure to simultaneously 
make the event of interest likely (i.e.~with probability of order unity rather than, say, the 
right-hand side of \eqref{eq:LB_RI-1}), all the while retaining good control on the 
Radon-Nikodym derivative it induces, which eventually leads to lower bounds as in 
\eqref{eq:LB_RI-1}. A similar but simpler tilting technique than the one we employ here was 
developed in a series of articles \cite{zbMATH06257634, zbMATH06797082,MR4650161,arXiv:2312.17074, arXiv:2405.14329} to study various monotone events of interest. In the present case, the function $f:\Z^d \to \mathbb{R}$ induces a tilt on \textit{labelled} 
trajectories entering the interlacement set which acts as a Doob-transform for the 
trajectories, in a manner that depends on the label (here working with labeled trajectories is 
intimately linked to the fact that the event \eqref{eq:LU-intro} requires working simultaneously 
at different levels), in order to induce a propitious spatially modulated level $u(x)$, for which 
we now give some heuristics.

Roughly speaking, 
the effect of the function $f$ we choose is  to create a `corridor' $T$ and concentric interface $T'$ (see the regions in light and dark grey on the horizontal axis in Fig.~\ref{fig:profile}), in which
 the new vacant sets $\widetilde{\V}^u$, resp.~$\widetilde{\V}^v$ declared under $\widetilde{\P}_f$ look slightly super-, resp.~sub-critical. That is, $f$ is designed so as to roughly ensure that for any $0<v \leq u$, and small $\varepsilon > 0$
 \begin{equation}\label{eq:LU-intro2}
 \widetilde{\V}^u \vert_{T} \stackrel{\text{law}}{\approx} \V^{u_* - \varepsilon}, \quad  \widetilde{\V}^v \vert_{T'} \stackrel{\text{law}}{\approx} \V^{u_* + \varepsilon}.
 \end{equation}
 The rationale behind \eqref{eq:LU-intro2} is that the opposite requirements appearing in \eqref{eq:LU-intro} become typical at the effective new levels $u_*\pm \varepsilon$; indeed connection in $\mathcal{V}^u$ is likely below $u_*$, whereas disconnection is above $u_*$. The way \eqref{eq:LU-intro2} is made precise is by developing couplings allowing to compare tilted and untilted interlacements; see Proposition~\ref{P:coupling}. These couplings boil down to rather precise comparison estimates between tilted and untilted harmonic quantities for the random walk (see for instance Proposition~\ref{P:harm-comp}) below, which make the ``effective levels'' $u_* \pm \varepsilon$ appear, and are ultimately responsible for our choice of tilting function $f$. One technical point is that the tilting function $f$ is typically not the discrete blow-up of a continuous analogue, which makes proving these comparison estimates somewhat involved.

 The remaining part is to control the relative entropy $H(\widetilde{\mathbb{P}}_{f} \vert \P)$ 
 between $\widetilde{\mathbb{P}}_{f} $ and $\P$, and with it the derivative between the two 
 measures. This eventually follows from (killed) capacity estimates for corridor regions $T$ 
 and $T'$, somewhat similar in spirit to those that arose in \cite{gosrodsev2021radius} in the 
 context of the Gaussian free field. However, unlike the setup of \cite{gosrodsev2021radius}, 
 which dealt with one-arm events (in the present context this would mean choosing $T,T'$ to be axis-aligned), the efficient connection between $0$ and $x$ underlying 
 \eqref{eq:LB_RI-1} follows the \textit{Euclidean} distance, hence the sets $T$ and $T'$ 
 are oblique and directed towards $e=\frac x{|x|}$. This setup is paramount in order to yield 
 \eqref{eq:LB_RI-1} and it precludes the use of certain projection arguments (onto subsets of 
 coordinates) used in \cite{gosrodsev2021radius}. Rather than following a route suggested in 
 \cite[Remark 5.17,2)]{gosrodsev2021radius}, which would resort to dyadic coupling to 
 Brownian motion, we develop robust techniques, which could be adapted, e.g.~to the setup 
 of  \cite{drewitz2018geometry} and involve the afore mentioned (non-standard) martingale arguments.
 In essence, the martingales in questions are built summing the Green's function over certain carefully designed sets, so as to ensure a desirable uniformity in the ensuing estimates; see Figure~\ref{fig:mart} for an illustration.
 
 \medskip
 
We now describe how this article is organized. 
Section~\ref{sec:oblique} 
isolates 
estimates on the hitting probability for an obliquely aligned cylinder pertaining to the problem 
discussed above. 
In Section~\ref{sec:gen} we state our general lower bound in Theorem~\ref{T:LB-gen} and 
deduce from it our main result, 
namely Theorem~\ref{T:LB-RI} above. The rest of the paper is devoted to the proof of 
Theorem~\ref{T:LB-gen}. Section~\ref{sec:tilt} introduces the tilting functions $f$ of interest, 
along with the tilted interlacement measure $\widetilde{\P}_f$ they induce. 
Theorem~\ref{T:LB-gen} is then reduced to two results, Propositions~\ref{P:entropy} 
and~\ref{P:typical}, from which the proof of Theorem~\ref{T:LB-gen} is readily concluded. 
Proposition~\ref{P:entropy} is the control on the relative entropy 
$H(\widetilde{\mathbb{P}}_{f} \vert \P)$. It is proved in Section~\ref{sec:entropy} which draws 
upon, among others, the aforementioned hitting probability estimate for `oblique corridors' 
given by Proposition~\ref{L:hit} in Section~\ref{sec:oblique}. Proposition~\ref{P:typical} 
asserts that the event of interest in Theorem~\ref{T:LB-gen} becomes typical under the tilted 
measure. The proof of Proposition~\ref{P:typical} spans Sections~\ref{sec:coupling} 
and~\ref{sec:tilt-hm}. The key ingredient is a coupling between tilted and untilted 
interlacements at suitable levels that makes precise the above heuristics, in particular, \eqref{eq:LU-intro2}.

 
 \bigskip
 
 Our convention regarding constants is as follows. Throughout the article $c, c', C, C', \ldots$ denote generic 
 constants with values in $(0, \infty)$ which are allowed to change from place to place. All constants may implicitly depend on the 
 dimension $d \ge 3$. Their dependence on other parameters will be made explicit. Numbered constants are 
fixed when 
first appearing  
within the text.

\section{Hitting probabilities for oblique sets}\label{sec:oblique}

We first 
present an 
upper bound on the hitting probability of a cylinder (referred to as {\em corridor} in the introduction) 
aligned along an {arbitrary} direction in 
$\R^3$. This bound 
becomes effective when the 
distance between the 
starting point of the walk 
and the cylinder is of {lower order} than its {length}, 
which renders 
the `standard' 
estimate obtained using bounds on capacity and Green's function (see, e.g.~via \eqref{eq:last-exit} below) 
{\em trivial} (i.e. $\ge 1$). A similar estimate was obtained in 
\cite[(2.25)]{gosrodsev2021radius} for {\em axis-aligned} cylinders. The main feature of our 
bound is that it holds {uniformly} in all directions which is consistent with the rotational 
invariance apparent in \eqref{eq:LB_RI-1} and \eqref{eq:SB_RI-1}. 
The argument in \cite{gosrodsev2021radius} relied heavily on projection arguments, whose 
straightforward application is precluded when the 
cylinders are oblique. 
We proceed using a (different) martingale argument which inherits the required (rotational) 
symmetry from the 
large-distance behavior of the Green's function (see~\eqref{eq:c_2} below) rather than from
the Brownian motion directly, as suggested in \cite[Remark 
5.17,2)]{gosrodsev2021radius}. 
Consequently, our method could be adapted to very general class of graphs; see, 
e.g.~\cite{drewitz2018geometry}.


Below and in the remainder of the article, 
we write $P_x$ for the canonical law of the continuous-time random walk on $\Z^d$, $d \geq 
3$, with starting point $x\in \Z^d$ and mean one exponential holding times. We write 
$X=(X_t)_{t \geq0}$ for the canonical process under $P_x$. For $K \subset \Z^d$, we 
introduce the stopping time $H_K = \inf\{t\geq 0: X_t \in K\}$, $T_K=H_{\Z^d \setminus K}$ 
and $\widetilde{H}_K=\inf\{ t \geq 0: X_t \in K \text{ and } \exists s\in [0,t] \text{ s.t. } X_s\neq 
X_0\}$. For $U \subset \Z^d$, we write $g_U$ for the Green's function of the walk killed when 
exiting $U$, i.e.
\begin{equation}
	\label{eq:g_U}
	g_U(x,y)= E_x\Big[ \int_0^{T_U} 1\{X_t=y \} \mathrm{d}t \Big], \ x,y\in \Z^d
\end{equation}
which is symmetric and finite, and for any $K\subset U$ (finite) we denote by $e_{K,U}$ for 
the equilibrium measure of $K$ relative to $U$, \begin{equation}
	\label{eq:e_K,U}
	e_{K,U}(y)=P_y[\widetilde{H}_K > T_U] 1_{\{y \in K\}}.
\end{equation}
Its total mass is denoted by $\text{cap}_U(K)$, the capacity of $K$ (relative to $U$). We 
omit $U$ from the notations in \eqref{eq:g_U} and~\eqref{eq:e_K,U} whenever $U=\Z^d$ 
(with $T_{\Z^d} = \infty$ by convention). An application of the strong Markov property yields 
the formula
\begin{equation}
	\label{eq:last-exit}
	P_y[H_K< T_U]= \sum_{z \in K} g_U(y,z)e_{K,U}(z), 
\end{equation}
valid for all $y \in \Z^d$ and $K$ finite set.

We write $|\cdot|$ for the Euclidean norm and denote by $d(\cdot,\cdot)$ the Euclidean distance between sets.
Let $x \in \Z^d \setminus \{0\}$ 
and $e=\frac{x}{|x|} $. For a point $z\in 
\mathbb{R}^d$, let $[z] \in \mathbb{Z}^d$ be a point achieving $d(z, \mathbb{Z}^d)$. 
We now introduce certain discretized cylindrical sets, namely (with  $e=\frac{x}{|x|} $)
\begin{equation}\label{eq:T_x}
	\begin{split}
		&T(x)= T(x,0)= \{ [j
		e] : 0 \leq j \leq \lceil |x|
		\rceil \},\\
		&T(x,r) = \{ y \in \Z^d: \, d(y,T(x))\leq r\}, \quad r \geq0.
	\end{split}
\end{equation}
In the sequel, $\partial K \stackrel{}{=} \{x \in K: d(x, K^c) = 1\}$ denotes the inner 
vertex boundary of $K \subset \Z^d$ 
whereas $\partial^{\text{out}}K = \partial(\Z^d \setminus K)$ its outer 
boundary.
 \begin{prop}
 \label{L:hit} For all 
$\delta, \varepsilon \in (0, 1)$, 
$|x| \geq C(\delta, \varepsilon)$, if $d=3$,
 	\begin{equation}\label{eq:hit}
 		\inf_
 		{y \notin T(x, |x|^\delta)} P_y[H_
 		{T(x, |x|^{(1 - \varepsilon)\delta})} = \infty] \geq c \delta\varepsilon.
 	\end{equation}
 \end{prop}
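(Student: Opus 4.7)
I will derive the bound by an optional-stopping argument applied to a non-negative test function $h:\Z^3 \to [0,\infty)$ satisfying (i) $h$ is bounded and vanishes at infinity, and (ii) $h$ is harmonic on $\Z^3 \setminus T(x, r')$, where $r = |x|^\delta$ and $r' = |x|^{(1-\varepsilon)\delta}$. Writing $\tau = H_{T(x, r')}$, the stopped process $h(X_{t \wedge \tau})$ is then a bounded martingale; since the walk is transient in $d=3$, $h(X_t) \to 0$ almost surely on $\{\tau = \infty\}$, and optional stopping yields
\[
h(y) = E_y\bigl[ h(X_\tau)\, \mathbf{1}_{\{\tau < \infty\}} \bigr] \ge \Bigl( \inf_{z \in T(x, r')} h(z) \Bigr)\, P_y[\tau < \infty].
\]
The proposition thus reduces to exhibiting an $h$ with $\sup_{y \notin T(x, r)} h(y) \leq (1 - c\, \delta\, \varepsilon)\, \inf_{z \in T(x, r')} h(z)$.

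\textbf{Candidate and heuristic.} The natural candidate is $h(y) = \sum_{z \in S} g(y,z)$ for some set $S \subseteq T(x, r')$: harmonicity outside $T(x, r')$ follows from $-\Delta g(\cdot, z) = \delta_z$, while the $d = 3$ decay $g(y, z) \lesssim |y-z|^{-1}$ announced in~\eqref{eq:c_2} below yields $h(y) \to 0$ as $|y| \to \infty$. Taking $S$ roughly line-like, the sum is comparable to the potential of a uniformly charged segment in $\R^3$, whose value at $y$ in cylindrical coordinates $(s, t_y)$ around the line through $T(x)$ is approximately $2 c_0 \log(|x|/s)$ whenever $s = d(y, T(x)) \ll |x|$ and $t_y$ lies in the bulk of $T(x)$ (with $c_0$ the constant from~\eqref{eq:c_2}). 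This suggests that both the supremum and the infimum above take the form $2 c_0 \log(|x|/\cdot) + O(1)$, so that the prefactor $2 c_0$ cancels and the ratio becomes
\[
\frac{\log(|x|/r)}{\log(|x|/r')} = \frac{1-\delta}{1-\delta+\varepsilon\delta} \le 1 - \delta\, \varepsilon,
\]
which is the desired bound.

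\textbf{Main obstacle.} The crux---and the reason the set $S$ must be ``carefully designed,'' in the language of the introduction---is \emph{uniformity} of the lower bound on $h$ over $T(x, r')$. For the naive choice $S = T(x)$, the line-charge potential near the endpoint caps of $T(x, r')$ drops by roughly a factor of $2$ relative to its bulk value, because only one side of the segment contributes, destroying the match of leading-order constants and rendering the resulting ratio bound trivial. To repair this, $S$ should be chosen to smooth out the endpoint deficit while remaining inside $T(x, r')$ (so harmonicity of $h$ off $T(x, r')$ is preserved); a natural remedy is to take $S$ to be a thin cylinder $T(x, r'')$ for a suitable $r'' \in (0, r')$, possibly weighted or with locally enlarged support near the endpoints so that the resulting potential is uniformly of order $\log(|x|/r')$ on all of $\partial T(x, r')$. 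Controlling $h$ then reduces to a careful discrete-to-continuum comparison of $\sum_{z \in S} g(y, z)$ with $\int_S c_0/|y-z|\, \mathrm{d}z$, with errors that must be uniform in the direction $e = x/|x|$; it is precisely this uniformity that underpins the rotational invariance of~\eqref{eq:hit} and precludes the projection-based arguments of~\cite{gosrodsev2021radius}. Once the ratio bound above is established for a suitable $S$, the optional-stopping inequality immediately yields $P_y[\tau = \infty] \ge c\, \delta\, \varepsilon$ as asserted.
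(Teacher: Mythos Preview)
Your strategy---optional stopping for $h(y)=\sum_{z\in S}g(y,z)$ with $S$ a (discretised) line-like set---is exactly what the paper does, and you have correctly isolated the main difficulty: with $S=T(x)$ the lower bound on $h$ over $T(x,r')$ drops by a factor close to $2$ near the endpoint caps.

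The gap is in your proposed remedy. You insist on $S\subset T(x,r')$ so that $h$ stays harmonic off $T(x,r')$, and suggest thickening to a cylinder $T(x,r'')$ or locally enlarging/weighting $S$ near the endpoints. None of these works. Thickening leaves the deficit untouched, since the shortfall comes from missing \emph{length}, not width. A localised mass $W$ placed near an endpoint contributes $\sim W/\sqrt{j_0^2+(r')^2}$ at axial position $j_0$, so to fill the gap $c_0\log(|x|/r')$ at $j_0=0$ one needs $W\sim r'\log(|x|/r')$; but then at $j_0\sim kr'$ the line already gains $\sim c_0\log k$ while the mass contributes only $\sim c_0\log(|x|/r')/k$, and $\log k + A/k \ge A$ fails for $k\ge 2$ once $A=\log(|x|/r')$ is large. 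The same arithmetic defeats any compensation whose support has diameter $O(r')$.

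The paper's fix is to \emph{drop} the constraint $S\subset T(x,r')$: one takes $S=\widetilde{T}$, the line $T(x)$ extended by two rods of length $|x|$ beyond each endpoint. Then every $z\in T(x,r')$ away from the rod-tips sees at least $|x|/2$ of line on both sides, and the lower bound $E_z[M_0]\ge 2c_0(1-(1-0.9\varepsilon)\delta)\log|x|$ holds uniformly (Lemma~\ref{C:mart}). Two new issues arise, both handled separately: (i) the stopped process may now land on the extension $\mathrm{Ext}(\widetilde{T})$, but this has capacity $O(|x|/\log|x|)$ and sits at distance $\gtrsim|x|$ from the region of interest, so it is hit with probability $o(1)$; (ii) points $y$ on the outer boundary near the caps lie close to the rods and have large $h(y)$, so the upper bound \eqref{eq:mart-1} is asserted only on the ``long side'' $\mathbb{L}=\{d(\cdot,\ell)\ge R/4\}$, and the cap points $\mathbb{S}$ are reduced to $\mathbb{L}$ by a short chaining argument.
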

 
 \begin{proof} We abbreviate 
 \begin{equation*}
 T = T(x),  \quad T^2 = T(x, |x|^{\delta(1 - \varepsilon)}), \quad T^3 
 = T(x, |x|^{\delta}) 
 \end{equation*}
 throughout this proof. 
 Let us start with a few reduction steps. By a straightforward application of the strong Markov 
 property at time $H_{T^3}$ it is enough to show \eqref{eq:hit} for $y \in 
 \partial^{\text{out}} T^3$. {For $|x| \ge C(\delta)$, we can write $\partial^{\text{out}} T^3 
 	\subset \mathbb{S} \cup \mathbb{L}$,} 
 	where, 
 	writing $R= |x|^{\delta}$ 
 	and 
 	$\ell=\{te: t \in \mathbb{R}\}$ with $e = x/|x|$ and tacitly embedding 
 	$\Z^d\subset \R^d$ in writing expressions such as $d(z, \ell)$ below, we set
 	\begin{equation} \label{eq:short-long}
 		\begin{split}
 			& {\mathbb{L} \stackrel{\text{def.}}{=}  
 				\big\{ z \in T(x, 4
 				R) \setminus T^3: d(z, \ell) \ge \tfrac
 				{R}{4} \big\},} \\
 			& {\mathbb{S}\stackrel{\text{def.}}{=}  \partial^{\text{out}} T^3 \setminus \mathbb L}.
 		\end{split}
 	\end{equation}
 	One can reduce the case $y \in \mathbb{S}$ to the case $y \in \mathbb{L}$, as explained at the end of the proof. We now focus on $y \in \mathbb{L}$. For such $y$, we will in fact show a stronger statement, with $T^2$ in \eqref{eq:hit} replaced by a certain enlargement $\widetilde{T}^2 \supset T^2$, which we now introduce. This enlargement will later have a `uniformizing' effect on the martingale we consider, cf.~\eqref{eq:hit-1} and \eqref{eq:mart-2} below.
 	
 	\begin{figure}[htb!]
 		 \centering
 		\includegraphics[width=\linewidth]{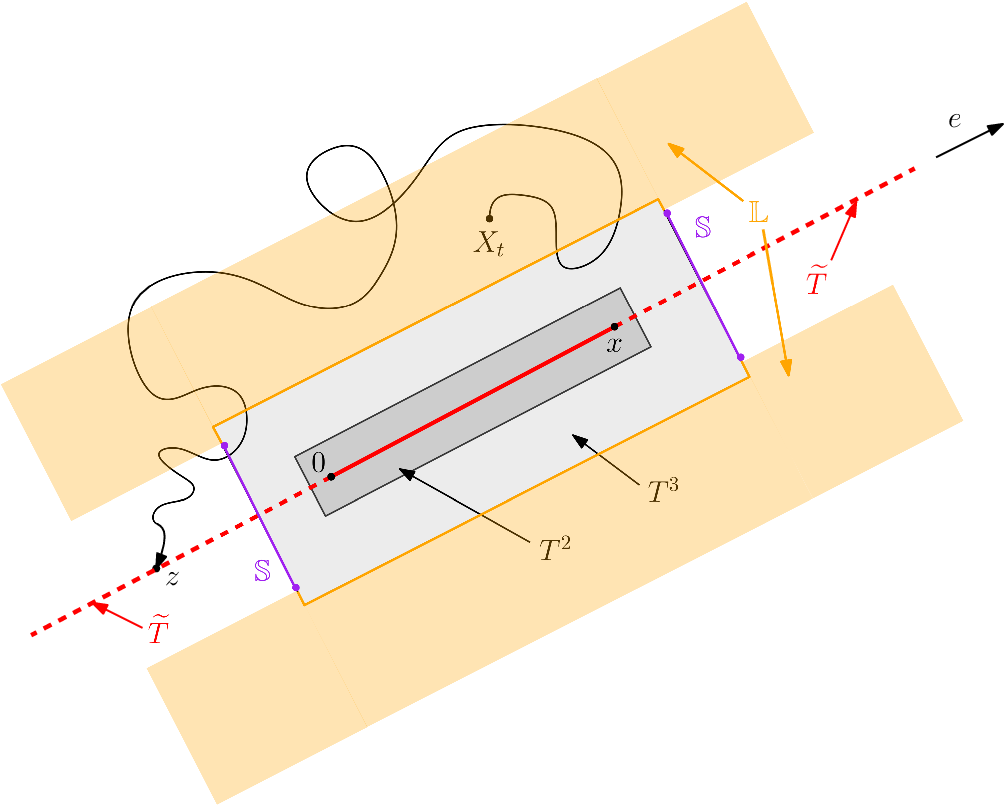}
 		\caption{{\bf A configuration of cylinders in a generic direction $\bm{e = {x}/{|x|}}$.} We illustrate a sample path contributing to the martingale $M_t$ introduced in \eqref{eq:hit-1}. The point $z$ varies over all of $\widetilde{T}$, which includes the two `stabilizing' rods (dashed). These are used to achieve the desired uniformity for the lower bound in \eqref{eq:mart-2}. In reality, all lines and rectangles drawn are replaced by lattice approximations, cf.~\eqref{eq:T_x}-\eqref{eq:T_i}.}
 		\label{fig:mart}
 	\end{figure}
 	
 	Recalling the notation $[z]$ from around \eqref{eq:T_x} and for $K \subset \mathbb{Z}^d$, let
 	\begin{equation} \label{eq:K-tilde}
 		\widetilde{K}\stackrel{\text{def.}}{=} K \cup \big\{ [j
 		e] : j \in \mathbb{Z} \cap \big( [-|x| , 0] \cup [ |x| , 2|x|] \big) \big\}.
 	\end{equation}
 	In words, $\widetilde{K}$ is obtained from $K$ by adding two `rods' of length roughly $|x|$ each, extending from near $0$ and $x$ in opposite directions parallel to $e$.
 	Abbreviating $T(x) \equiv T$ for the set from \eqref{eq:T_x} in the sequel,  let
 	\begin{equation}
 		\label{eq:hit-1}
 		M_t = \sum_{z \in \widetilde{T}} g(X_t,z),
 	\end{equation}
 	and define the stopping time $\tau= H_{\widetilde{T}^2} \wedge T_U= T_{U\setminus \widetilde{T}^2} $ with $U = B(0,10^3|x|)$.

 	Since $g(\cdot, z)$ is harmonic outside $z$ and using the fact that $\widetilde{T} \subset \widetilde{T}^2$ it readily follows that $(M_{t\wedge \tau})_{t \geq 0}$ is a martingale under $P_y$ for all $y \in \mathbb{L}$. We will need to separately consider the extremities $\text{Ext}(\widetilde{T}) \subset \widetilde{T}$, defined as follows. Let $x_{\pm} \in \widetilde{T}$ where $x_+= [2|x| e]= [2x]$ and $x_-= [-|x| e]=[-x]$ and
 	\begin{equation}
 		\label{eq:hit-2}
 		\text{Ext}(\widetilde{T}) \stackrel{\text{def.}}{=} \{ z \in \widetilde{T}: d(z,x_+)\wedge d(z,x_-) \leq |x|/2\}.
 	\end{equation}
 	With these definitions, the following hold.
 	\begin{lemma}[$d = 3$]
 		\label{C:mart} Let 
 		\begin{equation}\label{eq:c_2}
 			\Cl[c]{c:Green}=\lim_{|x| \to \infty} |x|\cdot { g(0,x)} \ (=\textstyle\frac3{2\pi})
 		\end{equation}
(see, e.g.~\cite[Theorem~1.5.4]{Law91}). For $|x| \geq C(\delta, \varepsilon)$, one has:
\begin{align}
 			&\label{eq:mart-1} \text{for all $z \in \mathbb{L}$: } E_z[M_0] \leq 2 \Cr{c:Green} 
 			(1 - (1 - 0.1\varepsilon)\delta) \log |x|, \\
 			&\label{eq:mart-2} \text{for all $z \in \widetilde{T}^2 \setminus \textnormal{Ext}(\widetilde{T})$: } E_z[M_0] \geq 2\Cr{c:Green} 
 			(1 - (1 - 0.9\varepsilon)\delta)
 			\log |x|. 
 		\end{align}
 	\end{lemma}
 	The proof of both \eqref{eq:mart-1} and \eqref{eq:mart-2} requires slight care owing to 
 	discrete effects. To avoid disrupting the flow of reading the proof is postponed to the end of this section.
 	Suppose now that Lemma~\ref{C:mart} holds. We proceed to show \eqref{eq:hit} for $y \in \mathbb{L}$. Since $|M_{t\wedge \tau}| \leq C$ $P_y$-a.s., the optional stopping theorem applies and yields, upon
 	neglecting the (positive) contributions stemming from both the event 
 	$\{H_{\widetilde{T}^2} > T_U\}$ and otherwise from the case where 
 	$X_{\tau}=X_{H_{\widetilde{T}^2}}$ belongs to $ \text{Ext}(\widetilde{T})$, that for all $|x| 
 	\geq C(\delta, \varepsilon)$ and $ y\in \mathbb{L}$,
 	\begin{multline}\label{eq:hit-5}
 		2\Cr{c:Green} 
 		(1 - (1 - 0.1\varepsilon)\delta)\log|x| \stackrel{\eqref{eq:mart-1}}{\geq} E_y[M_0] \\= E_{y}[M_{\tau}] \stackrel{\eqref{eq:mart-2}}{\geq} 
 		P_y\big[H_{\widetilde{T}^2} \leq T_U, X_{\tau} \notin \text{Ext}(\widetilde{T})\big] \cdot 2 \Cr{c:Green} 
 		(1 - (1 - 0.9\varepsilon)\delta) \log|x|. 
 	\end{multline}
 	By definition of $\widetilde{T}$ and $\text{Ext}(\widetilde{T})$, see \eqref{eq:K-tilde} and \eqref{eq:hit-2}, see also \eqref{eq:T_i} one has that $d(T(x, 4R), \text{Ext}(\widetilde{T})) 
 	\geq \frac{|x|}{4}$ when 
 	$|x| \geq C(\delta, \varepsilon)$. By \cite[Lemma 2.2 and Remark 
 	2.3]{gosrodsev2021radius} (or using \eqref{eq:cap-1} below), one knows that 
 	$\text{cap}\big(\text{Ext}(\widetilde{T})\big) \leq C\frac{|x|}{\log |x|}$. By 
 	\eqref{eq:last-exit}, it thus follows that
 	\begin{equation}
 		\label{eq:hit-6}
 		\sup_{y \in T(x, 4R)} P_y\big[H_{\text{Ext}(\widetilde{T})}< \infty\big] \leq C( \log|x|)^{-1} \to 0 \text{ as $|x| \to\infty$.}
 	\end{equation}
Using \eqref{eq:hit-6} one deduces in particular that $P_y\big[H_{\widetilde{T}^2} \leq 
T_U, X_{\tau} \in \text{Ext}(\widetilde{T})\big] \leq 0.2\Cr{c:Green}\delta\varepsilon$ 
for $y \in \mathbb{L}$ whenever $|x| \geq C(\delta, \varepsilon)$. Using this together with 
\eqref{eq:hit-5} and the fact that $T_2 \subset \widetilde{T}_2$ by \eqref{eq:hit-1}, it follows 
that 
\begin{equation}
\label{eq:hit-7}
P_y[H_{{T}^2} \leq T_U] \leq P_y[H_{\widetilde{T}^2} \leq T_U] \leq 1- 2 \Cr{c:Green}  
\Big(\frac{0.8\delta\varepsilon}{1 - (1 - 0.9\varepsilon)\delta}- 0.1 \delta \varepsilon\Big),
\end{equation}
for all $y \in \mathbb{L}$ and $|x| \geq C(\delta, \varepsilon)$. With a similar calculation as in 
\eqref{eq:hit-6}, one finds that $\sup_{z \in \Z^d} P_z[ T_U< H_{{T}^2}< \infty]\to 0$ as $|x| 
\to \infty$. Combining with \eqref{eq:hit-7}, one deduces the bound in \eqref{eq:hit} uniformly 
in $y \in \mathbb{L}$. 
 	
 	To complete the proof of \eqref{eq:hit} it thus remains to treat the case $y \in \mathbb{S}$ in \eqref{eq:hit}. To deal with this, we now argue that
 	\begin{equation}
 		\label{eq:hit-8}
 		\inf_{y \in \mathbb{S}}P_y[H_{\mathbb{L}} < H_{T^2}] \geq c.
 	\end{equation}
 	Once \eqref{eq:hit-8} is shown, \eqref{eq:hit} follows by applying the strong Markov property at time $H_{\mathbb{L}}$ and combining \eqref{eq:hit-8} with the lower bound 
 	on $P_y[H_{T^2}=\infty]$ for $y \in \mathbb{L}$ already derived. We show \eqref{eq:hit-8} 
 	using a chaining argument. Let $y \in \mathbb{S}$. By definition, see \eqref{eq:short-long} 
 	and \eqref{eq:T_i} for any such $y$ we can find  a finite number of boxes 
 	{$B_i=B(x_i, \frac{R}{100})$}, $1 \leq i \leq n$ (with $n$ uniform in $y$) such 
 	that $B_1 \ni y$, $B_{i+1}$ and $B_i$ have a non-empty overlap containing a translate of $B(0, \frac{R}{300})$ for all $1 \leq i < n$, 
 	$B_n \subset \mathbb L$ and lastly $B(x_i, \frac{R}{10}) \cap T_2 = \emptyset$ for all $1\leq i \leq n$. Then, using 
 	\cite[(A.4)]{PRS23} with the choice $\delta=1$, one readily infers that $\inf_{z \in 
 		B_i}P_z[H_{B_{i+1}}< H_{T^2}] \geq c$ for all $1 \leq i < n$.   Applying the strong Markov property repeatedly, \eqref{eq:hit-8} follows as 
 	$B_n \subset \mathbb L$.
 \end{proof}
We now give the proof of the estimates  \eqref{eq:mart-1} and \eqref{eq:mart-2}, for which 
the stabilizing effect of the extension $\widetilde{T}$ comes into effect.
\begin{proof}[Proof of Lemma~\ref{C:mart}]
Recall $M_t$ from \eqref{eq:hit-1}, which involves summation along the oblique line $\widetilde{T}$, see \eqref{eq:T_x} regarding $T=T(x)$ and \eqref{eq:K-tilde} regarding its extension $\widetilde{T} \supset T$. As before, with $|\cdot|$ denoting the Euclidean norm, let $e=\frac x{|x|}$ for $x \neq 0$, and viewing $\Z^d \subset \R^d$, consider the line $\ell=\{te: t \in \mathbb{R}\}$.
	Given $z \notin \ell$, define ${y}_z \in \ell $ as the point minimizing the distance to $z$, so that, with $\langle \cdot, \cdot\rangle$ denoting the standard inner product,
	\begin{equation}
		\label{eq:app1}
		\langle z- {y}_z, e \rangle=0
	\end{equation}
	and let $j_z \in \mathbb{Z}$ be such that $|je- y_z|$ is minimized when $j= j_z$. In particular, it follows that
	\begin{equation}
		\label{eq:app1-bis}
		|j_ze- y_z| \leq C.
	\end{equation}
	From this, one obtains that for all $y \in \widetilde{T}$, i.e.~for all $y$ of the form $y=[je]$ for some $j \in \mathbb{Z} \cap [-|x|,2|x|]$, and $z \in \mathbb{Z}^d$, abbreviating $d_z= d(z,\ell)$ (recall that $d(\cdot,\cdot)$ refers to the Euclidean distance between sets), whence $d_z=|z- y_z|$, that
	\begin{multline} \label{eq:app3}
		|y-z| =  |[je]-z| \geq |je-z|- C \\
		\stackrel{(\ast)}{=}  \sqrt{ | je- y_z|^2 + d_z^2} - C
		\geq d_z  \sqrt{ 1+ \frac{ ( (|j- j_z| -\Cl{C:gf-estim22}) \vee 0)^2}{d_z^2}}  - C,
	\end{multline}
	where we used in deriving $(\ast)$ that $y_z - e$ is collinear with $e$, whence $\langle je- 
	y_z, z- {y}_z \rangle =0$, and in the last step that $| je- y_z| \geq |j- j_z|- |j_z e- y_z| \geq  
	|j- j_z|- C$ using \eqref{eq:app1-bis}. Now if $z\in \mathbb{L}$ as in \eqref{eq:mart-1}, then 
	in view of \eqref{eq:short-long} one has that $j_z \in [-|x|/2, 3|x|/2 \rceil]$ whenever $|x| 
	\geq C(\delta, \varepsilon)$. For such $z$, let $I_k= \{ j \in \mathbb{Z}:  |j- j_z| 
	-\Cr{C:gf-estim22} \in [k d_z , (k+1) d_z) \}$ and $K = 2|x|/d_z$, so that
	\begin{equation} \label{eq:app5}
		\{ j \in \mathbb{Z}: [je] \in \widetilde{T}\} \setminus [j_z -2\Cr{C:gf-estim}, j_z+2 \Cr{C:gf-estim}] \subset \bigcup_{k=0 }^K I_k.
	\end{equation}
	It follows that for $z \in \mathbb{L}$ and $|x| \geq C'(\delta, \varepsilon)$, recalling $M_t$ 
	from \eqref{eq:hit-1}, one has
\begin{multline} \label{eq:app4}
(1+ \textstyle \frac{\varepsilon}{10^3})^{-1} E_z[M_0] \displaystyle\stackrel{\eqref{eq:c_2}}{\leq}  
\Cr{c:Green} \sum_{y \in \widetilde{T}} \frac{1}{|z-y|} \\\stackrel{\eqref{eq:app5}}{\leq} C+ 
\Cr{c:Green} \sum_{k=0}^K \sum_{y \in \widetilde{T} } \frac{ 1{\{ y = [je] \text{ for a } j \in I_k\}} 
}{ |z-y|} \stackrel{\eqref{eq:app3}}{\leq} C'+ \Cr{c:Green} d_z^{-1} \sum_{k=1}^K |I_k| 
\frac{1}{\sqrt{1+ k^2}- \frac C{d_z}}.
\end{multline}
Now, using the fact that 
$c|x|^{\delta} \leq d_z = d(z,\ell) \leq C|x|^{\delta}$ for all $x$ and $z \in \mathbb{L}$ one 
infers that the last fraction in \eqref{eq:app4} is at most 
$(1+ \frac{\varepsilon}{10^3}) k^{-1}$ for $|x| \geq C(\delta)$, that 
$K \leq C |x|^{1-\delta}$ and that $|I_k|=2 d_z$ for any such $k$. Substituting these bounds 
into \eqref{eq:app4}, the bound \eqref{eq:mart-1} readily follows upon performing the 
harmonic sum and using that $\sum_{1 \leq k \leq K}\frac1{k} \leq 1+ \log K$.

	To obtain \eqref{eq:mart-2}, one proceeds similarly with a few modifications, which we now highlight. It is here that the extension $\widetilde{T}$ bears its fruits over $T$ and allows a not too wasteful estimate (in particular, one correctly producing the pre-factor $2$ appearing on the right-hand side of \eqref{eq:mart-2}). One readily derives a companion bound to \eqref{eq:app3}, yielding that for all $z \in \mathbb{Z}^d$ and $j \in \mathbb{Z}$,
	\begin{equation}\label{eq:app6}
		|[je]-z| \leq d_z  \sqrt{ 1+ \frac{ ( |j- j_z| +\Cl{C:gf-estim})^2}{d_z^2}}  + C.
	\end{equation}
	The fact that $j_z$ as defined above \eqref{eq:app1-bis} continues to range in $[-|x|/2, 3|x|/2 \rceil]$ when $z \in \widetilde{T}^2 \setminus \textnormal{Ext}(\widetilde{T}) $ as in \eqref{eq:mart-2} follows readily using the definition of $\textnormal{Ext}(\widetilde{T})$ in \eqref{eq:hit-2}. Thus, in view of the definition of $\widetilde{T}$ in   \eqref{eq:K-tilde}, if one sets $K' = c |x|/d_z$ for sufficiently small $c$ and defines $I_k'= \{ j \in \mathbb{Z}:  |j- j_z|  \in [k d_z , (k+1) d_z) \}$, then one obtains the inclusion  
	\begin{equation}\label{eq:app7}
		\{ j \in \mathbb{Z}: [je] \in \widetilde{T}\} \supset \bigcup_{k =1}^{K'} I_k'.
	\end{equation}
	Equipped with \eqref{eq:app6} and \eqref{eq:app7}, now playing the role of \eqref{eq:app3} and \eqref{eq:app5}, respectively, one readily performs a computation akin to \eqref{eq:app4}, using among others the complementary bound stemming from \eqref{eq:c_2} as well as the fact that the sets $I_k'$ are disjoint, to conclude \eqref{eq:mart-2}.
\end{proof}
	%
 \section{Generalized lower bound}\label{sec:gen}
 
 We now state a general lower bound on a certain class of events involving simultaneous connection and disconnection events in the vacant set which is valid across the {\em full} 
 range of parameter values. The precise statement is  given in Theorem~\ref{T:LB-gen} below. The event of interest in Theorem~\ref{T:LB-gen} 
 (cf.~\eqref{eq:LB-final-3}) has three distinguishing features, which together  allow for various interesting consequences. First, it is quantitative in the sense that 
 connection and disconnection are implemented in specific (tubular) regions  introduced below.   Second, the event allows a small amount of unfavorable sprinkling 
 between the levels involved in the connection and disconnection events (parametrized by $\eta > 0$ below). Third, it leaves the flexibility for a good event $G^u(x)$, 
 see \eqref{eq:LB-final-2}, which in practice allows for local surgery around  $0$ and $x$: indeed, the event in Theorem~\ref{T:LB-gen} only connects their 
 $r_1$-neighborhhood, where $r_1$ is chosen below, see \eqref{eq:LB-final-1}. 
 
Theorem~\ref{T:LB-RI}, as well as various other interesting lower bounds (see for instance 
Remark~\ref{R:1-arm}), are then derived in the remainder of this section, using 
Theorem~\ref{T:LB-gen} as crucial ingredient. Theorem~\ref{T:LB-RI} follows rather 
straightforwardly once one implements a suitable surgery to connect $0$ and $x$ (without 
spoiling disconnection). 
 
We will work with some special cylindrical sets (recall \eqref{eq:T_x}) which will play a role in 
the sequel. To this end, let $x \in \Z^d \setminus \{0\}$
 (eventually this will correspond to the point $x$ appearing in the statement of 
 Theorem~\ref{T:LB-RI}) and $\delta \in (0,\frac16)$. Our construction involves the nested 
 sets 
 \begin{equation}
 \label{eq:T_i}
 \begin{split}
 &T^1 \subset \dots \subset T^6, \text{ where } \\
 &T^i=T(x,(|x| \vee 100)^{i \delta}), \, 1 \leq i \leq 5, \quad T^6= T(x,4(|x| \vee 100)^{5 \delta}), 
\end{split}
 \end{equation}
 which implicitly depend on $x$ and $\delta$. Let 
 \begin{equation}\label{eq:LB-final-1}
 r_0 =r_0(x,\delta) \stackrel{\text{def.}}{=} 100 \vee \frac{|x|^\delta}{100}, \quad r_1=r_0^{1/4}.
 \end{equation}
 In the sequel 
 $(\ell_y^u)_{y\in \Z^d, u>0}$ denotes the occupation time field of the 
 interlacement point process; for its explicit definition cf.~\eqref{eq:occ-time} below. In what 
 follows we consider, for $u>0$, $x \in \Z^d $, $\delta \in (0,1)$ and $\eta \in [0,1)$ (often kept 
 implicit in our notation),  a generic event $G^u$ of the form $G^u=\bigcap_{i \in I} 
 G^u_{B_i}$, where $(B_i : i \in I)$ is a finite sequence of boxes of radius $10 r_1$ with 
 centers in $T(x)$. It may well be $B_i=B_j$ for some $i \neq j$. We further assume that 
 $G_B=G_B^u$ can be expressed measurably in terms of (at most) two local occupation 
 fields $(\ell_y^v)_{y \in B}$ and $(\ell_y^{w})_{y \in B}$, where $v,w \in 
 \{u(1- \frac k4\eta \sqrt{\delta}): k=0,1,2,3,4\}
 $ with $v>w$, and that $G_B$ is either increasing in $(\ell_y^v)_{y \in B}$ and decreasing in $(\ell_y^w)_{y \in B}$, or simply monotone if $G_B$ is measurable in terms of the occupation time field at a single level only. 

 \begin{thm} \label{T:LB-gen}   Let $G^u$ be of the above form and define, for all $u \in (0, \infty)$, $\eta \in [0,1)$, $x \in \Z^d$ and $\delta \in (0,1)$, the event
  \begin{equation}
\label{eq:A-final}
 A^{u}(x, \delta) = \big\{ G^u, \, \lr{}{\mathcal V^u \cap T^1}{B(0,r_1)}{B(x,r_1)}, \, \nlr{}{\mathcal V^{u(1-\eta \sqrt{\delta})}}{T^4}{\partial T^5} \big\}.
\end{equation}
If $(B_i : i \in I)$ is a finite sequence of boxes of radius $10 r_1$ with 
 centers in $T(x)$ and the following is true:
 \begin{equation}\label{eq:LB-final-2}
 \lim_{|x| \to \infty} \,  |I| \cdot  \sup_{i}  \P[(G_{B_i}^{u})^c]=0, \text{ for all  $u \in \textstyle[2^{-1}{ u_*},2 u_*]$,}
 \end{equation}
 then for all $u \in (0, \infty)$, $0 \leq \eta \leq \Cl[c]{C:eta} $, one has when $d=3$ that
  \begin{equation}\label{eq:LB-final-3}
\liminf_{\delta \downarrow 0} \liminf_{|x | \to \infty}\frac{\log |x |}{|x|}\log \P \big[ A^{u}(x, \delta)  \big] 
\geq - \frac{\pi}{3}\big(\sqrt{u_*} - \sqrt{u} \big)^2 - C\eta^2.
 \end{equation}
 \end{thm}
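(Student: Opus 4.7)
The plan is to execute the change-of-measure program sketched around \eqref{eq:LU-intro}--\eqref{eq:LU-intro2}. First I would introduce a ``harmonic humpback'' tilting function $f:\Z^d\to\R$ depending on $x$, $u$, $\eta$ and $\delta$, whose level sets follow the nested cylinders $T^1\subset\dots\subset T^6$ of \eqref{eq:T_i}: $f$ is tuned so that under the resulting tilted law $\widetilde{\P}_f$ on labelled interlacement trajectories (a Doob-transform acting differently on different labels, as previewed in Section~\ref{sec:tilt} and in the spirit of \cite{zbMATH06257634,arXiv:2312.17074}), the effective local intensity of the vacant set is slightly super-critical, $\approx u_*-\varepsilon_\eta$, on the inner corridor $T^3$ at label $u$, while being slightly sub-critical, $\approx u_*+\varepsilon_\eta$, on the interface shell $T^4\setminus T^3$ at the sprinkled label $u(1-\eta\sqrt{\delta})$, in the precise sense of \eqref{eq:LU-intro2}.

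Next I would apply the standard entropy inequality
\begin{equation*}
\log\P[A^u(x,\delta)] \;\geq\; -\frac{H(\widetilde{\P}_f\,|\,\P)+e^{-1}}{\widetilde{\P}_f[A^u(x,\delta)]},
\end{equation*}
which splits the task into two claims, to be the contents of Propositions~\ref{P:entropy} and \ref{P:typical}: an entropy bound
\begin{equation*}
H(\widetilde{\P}_f\,|\,\P) \;\leq\; \Big(\tfrac{\pi}{3}\big(\sqrt{u_*}-\sqrt u\big)^2+C\eta^2+o(1)\Big)\frac{|x|}{\log|x|}\qquad(d=3),
\end{equation*}
and a typicality bound $\widetilde{\P}_f[A^u(x,\delta)]\to 1$ as $|x|\to\infty$ and then $\delta\downarrow 0$. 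Both are needed uniformly in the direction $e=x/|x|$, which is the crux of the difficulty.

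For the entropy estimate I would express $H(\widetilde{\P}_f\,|\,\P)$ via the Poisson intensity of labelled trajectories, reducing it to a weighted Dirichlet-type functional of $f$ against the random-walk generator. Since only trajectories hitting the corridor $T^4$ contribute at leading order, Proposition~\ref{L:hit} supplies exactly the uniform-in-direction hitting control needed to evaluate this integral. The numerical constant $\frac{\pi}{3}(\sqrt{u_*}-\sqrt u)^2$ then arises from combining (i) the Green's-function constant $\Cr{c:Green}=3/(2\pi)$ of \eqref{eq:c_2}, (ii) the oblique-cylinder capacity asymptotics $\Capacity(T^i)\asymp |x|/\log|x|$, and (iii) optimizing the height of the humpback plateau against the level gap $|\sqrt{u_*}-\sqrt u|$. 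For the typicality step I would use the coupling alluded to as Proposition~\ref{P:coupling} to identify $\widetilde{\V}^u\vert_{T^3}$ with $\V^{u_*-\varepsilon_\eta}$ and $\widetilde{\V}^{u(1-\eta\sqrt\delta)}\vert_{T^4\setminus T^3}$ with $\V^{u_*+\varepsilon_\eta}$: the connection event in $T^1$ follows from supercritical percolation of the former, the disconnection between $T^4$ and $\partial T^5$ follows from subcritical finiteness of clusters of the latter (invoking e.g.\ \cite{RI-I}), and the surgery event $G^u$ becomes typical by applying the union-bound hypothesis \eqref{eq:LB-final-2} to each $G_{B_i}^u$ at whichever effective level $\in[u_*/2,2u_*]$ the coupling places it at.

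The main obstacle is making these comparison estimates genuinely uniform in the oblique direction $e=x/|x|$. The function $f$ is not a lattice discretization of a smooth macroscopic profile, and the cylinders are tilted with respect to the lattice, so the coordinate-projection arguments of \cite{gosrodsev2021radius} are unavailable. The pointwise comparison between tilted and untilted hitting/harmonic quantities must therefore rest on the martingale-based rotational-invariance estimates of Section~\ref{sec:oblique}, in particular Proposition~\ref{L:hit}. It is exactly this uniformity in $e$ that forces the Euclidean-invariant prefactor $\pi/3$ in place of a direction-dependent constant, and that propagates through to produce the rotationally invariant rate function of \eqref{eq:LB_RI-1}; essentially all of the technical effort is concentrated at this point.
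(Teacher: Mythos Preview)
Your proposal is correct and follows essentially the same approach as the paper: introduce the tilted measure $\widetilde{\P}_f$ via the harmonic humpback profile, apply the entropy inequality \eqref{eq:gen.entrop.lower} to reduce to Propositions~\ref{P:entropy} (entropy bound) and~\ref{P:typical} (typicality), and feed in the oblique-cylinder estimates of Section~\ref{sec:oblique}. A few minor discrepancies are worth noting but do not affect the substance: the paper carries an auxiliary parameter $\varepsilon$ (independent of $\eta$) and takes limits in the order $|x|\to\infty$, $\varepsilon\downarrow 0$, $\delta\downarrow 0$; the plateaux of $f$ sit on $T^2$ and $T^6\setminus T^3$ rather than $T^3$ and $T^4\setminus T^3$; and the full entropy inequality retains the (harmless, vanishing) term $\log\widetilde{\P}_f[A^u(x,\delta)]$ that you dropped.
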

 
 
 The proof of Theorem~\ref{T:LB-gen} appears at the end of Section~\ref{sec:tilt}. It will be seen there to quickly follow from two main intermediate results, Propositions~\ref{P:entropy} and~\ref{P:typical}, which are proved in subsequent sections.
 As a first consequence of Theorem~\ref{T:LB-gen}, we derive in the remainder of the present section the asserted lower bound on 
 the truncated two-point function $\tau_{u}^{{\rm tr}}$ from \eqref{def:truncated_twopt}. 
 The event $G^u$ will thereby allow for local surgeries (around $0$ and $x$) at affordable 
 \textit{multiplicative} cost. This will notably involve a sprinkled finite energy technique 
 recently developed in \cite[Section 3]{RI-II}.
 
\begin{proof}[Proof of Theorem~\ref{T:LB-RI}] 
We first prove \eqref{eq:LB_RI-1} for which we need to strengthen the connection 
between $B(0, r_1)$ and $B(x, r_1)$ in \eqref{eq:A-final} to a connection between $0$ and 
$x$ as required by \eqref{def:truncated_twopt}. One obvious approach would be to take an 
intersection with the event $\{B_0\cup B_x \subset \mathcal{V}^u\}$ and use the 
FKG-inequality. But the absence of monotonicity of the event appearing in \eqref{eq:A-final} 
makes it inapplicable in this case. 

To address the issue we make use of the event 
$G^u$. For $y \in \mathbb{Z}^d$, let $A_y \subset \widetilde{A}_y$ denote the annuli 
defined as $A_y =  B(y, 6{r_1}) \setminus B(y, 4{r_1})$ and $\widetilde{A}_y =  B(y, 7{r_1}) 
\setminus B(y, 3{r_1})$. By choice of $r_1$ in \eqref{eq:LB-final-1} and in view of 
\eqref{eq:T_i}, we have that $(\widetilde{A}_0 \cup \widetilde{A}_x) \subset T^1$, as will be 
needed for the event defined momentarily in \eqref{eq:LB-final-5} to satisfy the assumptions 
of Theorem~\ref{T:LB-gen}.We will eventually apply Theorem~\ref{T:LB-gen} with $v=  
\frac{u}{1-\eta\sqrt{\delta}} \, ( > u)$ in place of $u$ and $\eta \in (0, \Cr{C:eta})$. To this 
effect, let
 \begin{equation}\label{eq:LB-final-5}
G^v(x) \stackrel{\text{def.}}{=} \bigcap_{z \in \{0,x \}} G_z^1\cap  G_z^2 \cap  G_z^3
\end{equation}
 where, setting $v_2 = v(1-\frac{\eta \sqrt{\delta}}2)$ so that $u < v_2 < v$, we define
 	\begin{equation}
	\begin{split}\label{eq:fin_energy_good}
				&{{G}}_{z}^1
\stackrel{{\rm def.}}{=} \bigcap_{y, y' \in \mathcal I^{v_2} \, \cap \, A_z}\big\{  \lr{}{\mathcal I^{v} \, \cap\,\widetilde{A}_z}{y}{y'}  \big\}, \\ 				
&{{G}}_{z}^2 \stackrel{{\rm def.}}{=} \big\{ (\mathcal I^{v_2} \setminus\mathcal I^{u})\cap B(z,2r_1)\ne \emptyset\big\}, \mbox{ and }
{{G}}_{z}^3
\stackrel{{\rm def.}}{=} \bigcap_{y \in B(z, 9r_1)} \{\ell_y^{v} \le {r_1}\},
		\end{split}
\end{equation}
and $\ell^v_{\cdot}$ denote the (discrete) occupation time field of random interlacements at level $v$. As we now briefly elaborate, one readily checks that the event $G^v=G^v(x)$ is of the form required by Theorem~\ref{T:LB-gen}, for a choice of boxes $B_i$, $i \in I$, with $|I|=6$, each centered at $0$ or $x$. For instance, the event $G_0^1$ in \eqref{eq:fin_energy_good} can be represented as $G_B^v$ with $B=B(0,10r_1) \, (\subset T^1)$ and expressed as a function increasing in $(\ell^{v}_x)_{x\in B}$ and decreasing in $(\ell^{v_2}_x)_{x\in B}$. The other events appearing in \eqref{eq:fin_energy_good} are dealt with similarly.

 As $r_1=r_1(x) \to \infty$ as $|x| \to \infty$, the fact that \eqref{eq:LB-final-2} holds  with $v$ in place of $u$ (for any $v>0$) follows by standard arguments, as we now briefly explain. To deal with $G_z^1$, one  uses \cite[Theorem 5.1]{DPR22}. As to \eqref{eq:LB-final-2} for $G_z^2$, one simply uses that $\mathcal{I}^{v_2} \setminus \mathcal{I}^u\neq \emptyset$ holds almost surely. Lastly, observing that $\ell_y^v$ has the same law as $\sum_{i=1}^N \zeta_i$, where $N $ is a Poisson variable with mean $v/g$, $g=g(0,0)$, and $\zeta_i$ are i.i.d.~geometric variables starting at $1$ with parameter $p=P_0[\widetilde{H}_0< \infty]= 1- 1/g$, independent of $N$, and applying a union bound, one obtains that $ \P[(G_z^3)^c] \leq  \sum_{y \in B(z, 9r_1)} \P[ \ell_y^{v} \geq {r_1}] \leq C r_1^d e^{-c(v)r_1}\to 0$ as $r_1 \to \infty$. Thus, all in all, \eqref{eq:LB-final-3} yields for $d=3$ that
\begin{multline}\label{eq:LB-final-6}
\liminf_{\delta \downarrow 0} \liminf_{|x | \to \infty}\frac{\log |x |}{|x|}\log \P \big[G^v(x), \, \lr{}{\mathcal V^v \cap T^1}{B(0,r_1)}{B(x,r_1)}, \, \nlr{}{\mathcal V^{u}}{T^4}{\partial T^5} \big] \\
\geq - \frac{\pi}{3}\big(\sqrt{u_*} - \sqrt{u}\big)^2 - C\eta^2,
 \end{multline}
 for all sufficiently small $\eta>0$, with $G^v$ as in \eqref{eq:LB-final-5}. 
 
 As we now explain, the event $G^v$ allows to obtain the bound
 \begin{multline}\label{eq:LB-final-6}
\big(  \tau_u^{{\rm tr}}(x) \geq \big) \   \P \big[ \lr{}{\mathcal V^u \cap T^1}{0}{x}, \, 
\nlr{}{\mathcal V^{u}}{T^4}{\partial T^5} \big]\\\geq e^{-Cr_1^{2d}}  \P \big[G^v(x), \, 
\lr{}{\mathcal V^v \cap T^1}{B(0,r_1)}{B(x,r_1)}, \, \nlr{}{\mathcal V^{u}}{T^4}{\partial T^5} 
\big],
 \end{multline}
 valid for all $d \geq 3$, $\delta \in (0,c)$ and $\eta \in (0,c(u))$. Once \eqref{eq:LB-final-6} 
 is shown, taking logarithms, using \eqref{eq:LB-final-6} and \eqref{eq:LB-final-1}, the 
 desired lower bound in \eqref{eq:LB_RI-1} for $u < u_*$ follows upon taking successively 
 the limits $|x | \to \infty$ (for fixed $\delta< \frac2{d}$, so that $r_1^{2d}\leq C |x|^{d\delta/2}= C|x|^{1-c}$ for some $c=c(\delta) >0$), then $\delta \downarrow 0$ and 
 finally $\eta \downarrow 0$. 

It remains to argue that \eqref{eq:LB-final-6} holds. The events $G_z^i$ defined in \eqref{eq:fin_energy_good} correspond to those appearing in \cite[(3.8)]{RI-II} for the choices $B=B(z,r_1)$ and $u_1=u_2=u_3=v$, $2\delta_1=\delta_2= \eta \sqrt{\delta} v$. In particular, the intersection $G_z^1\cap G_z^2\cap G_z^3$ implies the event referred to as $\widetilde{F}_B$ therein (see for instance \cite[(3.9)]{RI-II} or \cite[Remark 3.2]{RI-II}), on which the conclusions of \cite[Proposition 3.1]{RI-II} hold for $r=r_1$ and with $(v,u)$ above corresponding to $(u,u-\delta)$ therein. In particular, this entails the following.
Let $\widehat{B}=B(z,8r_1)$ and $\omega_{\widehat{B}}^-$ denote the point measure obtained from $\omega=\sum_i \delta_{(w_i^*, u_i)}$, the canonical interlacement point measure, by removing from each trajectory $w_i^*$ intersecting $\widehat{B}$ all excursions between $\widehat{B}$ and $\partial_{\textnormal{out}}\widehat{B}$. Then abbreviating by $\mathcal{F}=\sigma(\omega_{\widehat{B}}^-, \mathcal{I}^v \cap \widehat{B})$, one has by \cite[(3.2)]{RI-II} that $\widetilde{F}_B \in \mathcal{F}$ and moreover that
\begin{equation}\label{eq:LB-final-7}
 \P\big[B\subset \mathcal V^{u } \, \big| \, \mathcal{F}\,\big]1_{\widetilde{F}_B} \geq  e^{-C r^{2d}_1}, \quad B=B(z,r_1), \, z\in \{0,x\}. 
 \end{equation}
 Observe now that each of the events $G_x^1$, $G_x^2$, $G_x^3$, $\widetilde{F}_B$, $\{\lr{}{\mathcal V^v \cap T^1}{B(0,r_1)}{B(x,r_1)} \}$ and $\{ \lr{}{\mathcal V^{u}}{T^4}{\partial T^5}\}$ is $\mathcal{F}$-measurable: indeed, with regards to $G_x^i$, one uses that $100 r_1 \leq |x|$. With regards to the two connection events, one uses that $\omega_{\widehat{B}}^-$ contains the information about all the configurations $\mathcal{V}^v\setminus \widehat{B}$ and  $\mathcal{V}^u\setminus \widehat{B}$, and that $(T^5 \setminus T^4) \cap \widehat{B} =\emptyset$.
 Thus, returning to the probability on the right-hand side of \eqref{eq:LB-final-6}, using for fixed $z$ (say $z=0$) the inclusion $G_z^1\cap G_z^2\cap G_z^3 \subset \widetilde{F}_B$, applying \eqref{eq:LB-final-7} allows to enforce the event $\{B(0,r_1) \subset \mathcal{V}^u\}$ at the cost of a multiplicative factor $e^{C {r}^{2d}_1}$. Repeating the procedure for $z=x$ (which now also includes the event $\{B(0,r_1) \subset \mathcal{V}^u\} \in \mathcal{F}$, where the latter follows using again that $100 r_1 \leq |x|$) yields the desired connection between $0$ and $x$ in $\mathcal{V}^u$, and \eqref{eq:LB-final-6} follows. This completes the verification of \eqref{eq:LB_RI-1}.

We now prove \eqref{eq:LB_RI-2}, which is far simpler. Let $\ell \subset \Z^d$ denote a 
connected set of minimal cardinality intersecting both $0$ and $x$. Thus $|x| \leq |\ell| \leq 
C|x|$, where $|\ell|$ denotes the cardinality of $\ell$. Defining $\Sigma = \partial B(\ell, 1)$, where $B(\ell, 1)=\bigcup_{y \in \ell} B(y,1)$, we note that any unbounded path intersecting $\ell$ must also intersect $\Sigma$. Let 
$\mathcal{N}^u$ denote the number of trajectories of the interlacement at level $u$ 
intersecting $\Sigma$, a Poisson variable with mean $u \cdot \text{cap}(\Sigma)$. By 
subadditivity of the capacity, one has that $ \text{cap}(\Sigma) \leq  |\Sigma|  \leq C|x|$, 
whence
\begin{equation}\label{eq:LBdgeq4}
\tau_{u}^{{\rm tr}}(x) \geq \P[ \ell \subset \mathcal{V}^u, \, \Sigma \subset \mathcal{I}^u ] \geq c u  e^{-Cu |x|}\P\big[ \ell \subset \mathcal{V}^u, \, \Sigma \subset \mathcal{I}^u \,  \big|\, \mathcal{N}^u=1 \big],
\end{equation}
where the first inequality is an inclusion by construction of $\ell$ and on account of the above observation on $\Sigma$. Moreover,  in bounding $\P[\mathcal{N}^u=1]$ from below to obtain the second inequality, we also used that $\text{cap}(\Sigma) \geq \text{cap}(\{0\}) \geq c$. Now, the conditional probability on the right-hand side of \eqref{eq:LBdgeq4} is bounded from below by
$$
\inf_{z \in \Sigma} P_z[\text{range}(X) \supset \Sigma, \, \text{range}(X) \cap \ell =\emptyset ],
$$ 
where $X$ is the simple random walk starting from $z$ under $P_z$. We claim that the latter probability is bounded from below by $e^{-C|x|}$, which, if true and when substituted in \eqref{eq:LBdgeq4}, completes the proof. The former can be seen as follows. Fix a deterministic path $\gamma$ contained in $\Sigma$ starting in $z\in \Sigma$ whose range covers $\Sigma$ (which is a connected set). One can choose $\gamma$ in such a way that its length is bounded by $C|x|$, uniformly in $z$. Forcing $X$ to follow this path in its first steps ensures that $\text{range}(X) \supset \Sigma$, at a cost bounded from below by $(2d)^{-C|x|}$. Upon completing this requirement, one forces $X$ at a similar cost to move away from $\Sigma$ without intersecting $\ell$ following another deterministic path until reaching distance $|x|$ from $\Sigma$. For $z'$ at distance $|x|$ from $\Sigma$, denoting by $H_\Sigma$ the entrance time in $\Sigma$ one readily infers that, as $|x| \to \infty$, $$P_{z'}[H_\Sigma < \infty] \leq C|x|^{2-d} \text{cap}(\Sigma) \leq C' |x|^{3-d} \to 0.$$ The assertion now follows by applying the Markov property for $X$ and combining the various ingredients.
  \end{proof}
  
  \begin{remark}[one-arm estimates]
  \label{R:1-arm}
  \begin{enumerate}[label={\arabic*)}]
  \item \label{R:1-arm-real}
 By picking $x= N e_1$, one immediately infers from Theorem~\ref{T:LB-RI} similar bounds 
 for the (truncated) one-arm probability, by which,
 \begin{align}\label{eq:LB_RI-1'}
&\text{for }d=3: \quad \liminf_{N \to \infty}\frac{\log N}{N}\log  \P[\lr{}{\mathcal V^u}{0}{\partial B_N}, \nlr{}{\mathcal V^u}{0}{\infty}] \geq  -\frac{\pi}{3}(\sqrt{u} - 
\sqrt{u_*})^2
\end{align}
for all $u \in (0, \infty)$, whereas
\begin{align}\label{eq:LB_RI-2'}
&\text{for }d\geq 4: \quad \liminf_{N \to \infty} \frac1N\log  \P[\lr{}{\mathcal V^u}{0}{\partial B_N}, \nlr{}{\mathcal V^u}{0}{\infty}] \geq  -C(u). 
\end{align}
for all $u \ne u_\ast$. See \cite[Theorem 6.1 and Theorem 6.3]{GRS24.1} for matching upper bounds in the sub- and super-critical phase, respectively; see also \cite[Theorem 1.3]{prevost2023passage} in the sub-critical phase, including more general `first-passage percolation' distances.
\item \label{R:1-arm-quant}  The following lower bound, which both i) localizes the 
disconnection from $\infty$, and  ii) allows for a small unfavorable sprinkling can be useful in 
applications and is of independent interest (see below \eqref{eq:LU-intro}). 
Let $\delta \in (0,1)$ and $T_{\delta}= [-N^{\delta}, N+ N^{\delta}]\times 
[-N^{\delta}, N^{\delta}]^{d-1}$. Then for all $u< u_*$ and $0 \leq \eta < \Cr{C:eta}$, when 
$d=3$, 
one has that
\begin{multline}\label{eq:LB_RI-1'-quant}
\liminf_{\delta \downarrow 0} \liminf_{N \to \infty}\frac{\log N}{N}\log  \P\Big[\lr{}{\mathcal 
V^u \cap T_{\delta}}{0}{Ne_1}, \nlr{}{\mathcal V^{u(1-\eta \sqrt{\delta})}}{T_{2\delta}}{\partial 
T_{3\delta}}\Big] \\\geq - \frac{\pi}{3}\big(\sqrt{u_*} - \sqrt{u}\big)^2 - C\eta^2.
\end{multline}
The estimate \eqref{eq:LB_RI-1'-quant} follows by a slight modification of the above proof of 
Theorem~\ref{T:LB-RI}, which we now explain. Inspecting that proof, one chooses $x=N 
e_1$ and replaces the occurrences of $v_2$ and $u$ when defining the events $G_z^i$ in 
\eqref{eq:fin_energy_good} by $v_2 \coloneqq v(1-\frac{\eta \sqrt{\delta}}4)$ and $v_3 
\coloneqq v(1-\frac{\eta \sqrt{\delta}}2)$, respectively. The resulting events are still of the 
form required above Theorem~\ref{T:LB-gen}, and \eqref{eq:LB-final-2} continues to hold. 
Following  \eqref{eq:LB-final-6} and the subsequent arguments, these choices effectively 
allow to replace $u$ by $v_3$ in \eqref{eq:LB-final-7}. Together with the connection event 
in $\mathcal V^v \cap T^1$ appearing on the right of \eqref{eq:LB-final-6}, this leads to a 
connection between $0$ and $Ne_1$ at level $v_3 > u$, where $u$ is the level of 
disconnection in \eqref{eq:LB-final-6}. The lower bound \eqref{eq:LB_RI-1'-quant} then 
readily follows after a straightforward reparametrization. 
 
 When $ d\geq 4$, \eqref{eq:LB_RI-1'-quant} remains true upon making the following amendments. The factor $\log N$ appearing on the left-hand side is removed and the sets $T_{\delta}, T_{2\delta}$ and $T_{3\delta}$, are replaced by $[0,N]\times \{ 0\}^{d-1}, [0,N]\times \{ 0\}^{d-1}$ and $\partial ( [0,N]\times [-1,1]^{d-1})$, respectively. The bound on the right-hand side is then replaced by $-C(\eta,u)$; this bound follows directly by inspection of the proof of \eqref{eq:LB_RI-2}, for the choice $x=Ne_1$ (see the argument around \eqref{eq:LBdgeq4}).
\end{enumerate}
  \end{remark}

\section{The tilted measure $\widetilde{\mathbb{P}}_{f}$}
\label{sec:tilt}
The remaining sections are devoted to the proof of Theorem~\ref{T:LB-gen}. In the present section we introduce a certain tilt of the measure which will effectively render the event 
appearing in Theorem~\ref{T:LB-gen} typical. The tilted interlacement measure $\widetilde{\mathbb{P}}_{f}$ is introduced in \eqref{eq:PtildeAllLvlsdefn} below. It depends on a profile function $f$ which corresponds to a carefully chosen spatial modulation of the 
intensity (see Figure~\ref{fig:profile}). The key features of the measure $\widetilde{\mathbb{P}}_{f}$ are given by Propositions~\ref{P:entropy} and~\ref{P:typical} below, from which Theorem~\ref{T:LB-gen} is deduced at the end of the present section.

We start by introducing a minimal amount of notation that will be needed from here onwards. 
Recall the tubes $T^i=T^i(x)$ from \eqref{eq:T_i}, which depend on a choice of point $x \in 
\Z^d \setminus\{0\}$ and $\delta \in (0,1)$. In view of \eqref{eq:T_i}, one has the inclusions 
 \begin{equation}
 \label{eq:T_i-U}
 \begin{split}
 &(T^1 \subset \dots \subset) T^6 \subset U, \text{ where } U = B(0,10^3|x|).
\end{split}
 \end{equation}
We now introduce, for $\varepsilon \in (0,\frac{u_*}{10} \wedge 1)$ and $0<  v \leq u$, the 
profile function $f: \Z^d \to \R_+$ defined as 
\begin{align}
	{f}(y) \equiv {f}^{v,u; \varepsilon}(y) = 1 + \Big( \sqrt{\frac{u_{*}+\varepsilon}{v}} -1\Big) h_2(y) - \Big( \sqrt{\frac{u_{*}+\varepsilon}{v}} - \sqrt{\frac{u_{*}-\varepsilon}{u}} \Big)h_1(y), 
	\label{eq:f1hatdef}\end{align}
where $h_1(y) = P_y[H_{T^2} < T_{T^3}]$ and $h_2(y) = P_y[H_{T^6} < T_{U}]$ (see 
above \eqref{eq:g_U} for notations). Let us note that
\begin{align}
{f}(y) = \begin{cases} \sqrt{\frac{u_{*} - \varepsilon}{u}}, & x \in T^2
 \\ \sqrt{\frac{u_{*}+\varepsilon}{v}}, &x\in T^6\setminus T^3 \\ 1, &x\notin U \end{cases} \label{eq:chosenf-2} 
\end{align}
and is interpolated harmonically in between these regions; see Figure~\ref{fig:profile} and 
revisit the discussion around ``harmonic humpback'' in the introduction.

In light of this, we can now define a tilted interlacement measure $\widetilde{\P}_f$ for $f$ as in \eqref{eq:f1hatdef}, which 
we introduce next. The following construction is essentially the same as in \cite[Section 
2]{zbMATH06257634}, to which we frequently refer, except that we retain information on 
the labels $u$ for reasons related to the function $f$ and the non-monotonicity of the 
event of interest in the super-critical phase. Let ${W}_+$, resp.~${W}$ denote the space of 
infinite, resp.~bi-infinite continuous-time transient $\Z^d$-valued trajectories, with finitely 
many jumps in bounded intervals of times. We write $X_t$, $t \in \mathbb{R}$ (resp.~$t\geq 
0$) for the canonical coordinates on $W$ (resp.~$W_+$). Identifying trajectories $w,w' \in 
{W}$ using the equivalence relation ${w} \sim {w}'$ if $w(\cdot) = w'(\cdot + t)$ for some $t 
\in \mathbb{R}$ yields the space $W^{*} = W\setminus \sim$ of trajectories modulo 
time-shift and the canonical projection $\pi:W \rightarrow W^{*}$. We write $W_U^* \subset W^*$ for the set of trajectories entering $U$. If $w^* \in W_U^*$, we write $s_U^+(w^*)$ for the trajectory in $W_+$ obtained by considering any $w\in W$ such that $\pi^*(w)=w^*$ and restricting $w$ to the trajectory in $W_+$ obtained after $w$ first enters $U$.

With these notations, we then introduce, for $f$ as in \eqref{eq:f1hatdef}, the function $F_f:W^{+} \rightarrow \mathbb{R}$ 
defined as
\begin{align}
F_f(w) = \int_0^{\infty} V_f(w(s))\mathrm{d}s, \quad V_f = - \frac{\Delta f}{f} \label{eq:FdefonW+}
\end{align}
where $\Delta f(x) = \frac{1}{2d} \sum_{|{e}|=1} f(x+e) - f(x)$ for $f :\Z^d \to \mathbb{R}$ 
denotes the discrete Laplacian. The integral \eqref{eq:FdefonW+} is well-defined because 
$V_f=0$ outside $U$, a finite set and the trajectory $w$ is transient. In turn, the function 
$F_f$ in \eqref{eq:FdefonW+} induces a function ${F}_f^{*}$ on  $W^{*} \times (0, \infty)$ 
given by
\begin{align}\label{eq:F^*}
{F}_f^{*}(w^{*},v) &= \begin{cases} F_f(s_{U}^+(w^{*})) &  \text{whenever } (w^{*},v) \in W^{*}_{U} \times [0,u]) \\
0 & \text{otherwise.}\end{cases}
\end{align}
with $U$ as in \eqref{eq:T_i}. The function  ${F}_f^{*}$ defines the exponential tilt of the 
interlacement measure $\P$, as follows. The measure $\P$ is formally defined on the space 
$\Omega = \{ \omega = \sum_i \delta_{(w_i^*,u_i)}: w_i^* \in W^*, \, u_i \in (0,\infty) \text{ for 
all } i\geq 0, \text{ and } \omega (W_K^* \times [0,u])< \infty \text{ for all } K \subset \subset 
\Z^d \text{ and } u>0\}$. For suitable $G: W^{*} \times (0, \infty)$ we write $\langle \omega 
,G \rangle $ for the canonical pairing, i.e.~the integral of $G$ with respect to the point 
measure $\omega \in \Omega$. In particular, one readily checks from \eqref{eq:F^*} that 
$\langle \omega ,F_f^* \rangle$ leads to a finite sum and hence is well-defined. We then 
define the measure $\widetilde{\P}_f$ via the Radon-Nikodym derivative
\begin{align}
\frac{d\widetilde{\mathbb{P}}_{f}}{d\mathbb{P}} = e^{\langle \omega ,F_f^* \rangle} \label{eq:PtildeAllLvlsdefn}.
\end{align}
We now gather a few essential features of $\widetilde{\mathbb{P}}_{f}$, which will be used 
in the sequel. By a slight adaptation of \cite[Proposition 2.1]{zbMATH06257634}, one has 
that $\widetilde{\mathbb{P}}_{f}$ defined by \eqref{eq:PtildeAllLvlsdefn} is a probability measure for any $f$ as in \eqref{eq:f1hatdef}. Moreover, denoting by $\nu \otimes 
\lambda$ the intensity measure on $W^* \times (0,\infty)$ of the interlacement process 
$\omega$ under $\P$, where $\lambda$ denotes Lebesgue measure, one has that the 
canonical point measure $\omega$ under $\widetilde{\mathbb{P}}_{f}$ is a Poisson point 
process with intensity measure $e^{F_f^*}(\nu \otimes \lambda)$. 

The measure $\widetilde{\mathbb{P}}_{f}$ retains an interlacement-like character. In particular, we will use the following fact in the sequel. For $\omega =\sum_i \delta_{(w_i^*, u_i)} \in \Omega$ and $K\subset \subset \Z^d$, let
\begin{equation}\label{def:muK}
\mu_K({\omega}) = \sum_{i\geq0} \delta_{(s_K^+(w_i^*), u_i)} {1}\{w_i^{*} \in {W}_K^{*}\},	
\end{equation}
a locally finite (by definition of $\Omega$) point measure on $W_+ \times \R_+$. In words, $\mu_K$ retains all labeled trajectories $(w_i^*, u_i)$ in the support of $\omega$ which enter $K$, and replaces $w_i^*$ for such points by the forward trajectory $s_K^+(w_i^*) \in W_+$ obtained after $w_i^*$ first enters $K$. Then (see \cite[(2.9)]{zbMATH06257634} for a similar result),
\begin{equation}
\label{eq:mu_K}\text{under $\widetilde{\mathbb{P}}_{f}$, $\mu_K({\omega}) $ is a PPP on $W^{+} \times \R_+$ with intensity measure $\widetilde{P}_{\tilde{e}_K}^f \otimes  \lambda $}
\end{equation}
 where $\widetilde{P}_{\tilde{e}_K}^f= \sum_{x} \tilde{e}_K(x) \widetilde{P}_{x}^f$, which we proceed to define. The measure $\widetilde{P}_{x}^f$ is given by
 \begin{align}
\frac{d\widetilde{P}_x^f}{dP_x} =  \frac{1}{f(x)}e^{\int_0^{\infty} V_f(X_s)ds}
\label{eq:tiltedPathDef}\end{align}
with $V_f$ as in \eqref{eq:FdefonW+}. On account of \cite[Lemma 1.2 and Corollary 1.3]{zbMATH06257634}, $\widetilde{P}_x^f$ is a probability measure for all $f$ as in 
\eqref{eq:f1hatdef}, and the canonical process $(X_t)_{t \geq0}$ under $\widetilde{P}_x^f$ is a Markov chain on $\Z^d$ with reversible measure 
$\tilde{\lambda}(x)=f^2(x)$, $x \in \Z^d$, whose semi-group on $L^2(\tilde{\lambda})$ is given by $(e^{t\tilde{\Delta}})_{t \geq 0}$ where $\tilde{\Delta}h(x)  = 
\frac{1}{2d} \sum_{|{e}|=1}\frac{ f(x+e)}{f(x)}(h(x+e)-h(x))$, for $h \in L^2(\tilde{\lambda})$. The equilibirum measure $\tilde{e}_K$ appearing in \eqref{eq:mu_K} is 
defined as
\begin{equation} \label{eq:eq-tilted}
 \tilde{e}_{K}(x)= \widetilde{P}_x^f[\tilde{H}_K = \infty] {1}_K(x)f(x)\Big(\frac{1}{2d} \sum_{|e|=1} f(x+e)\Big), \text{ for } x \in \mathbb{Z}^d.
\end{equation}
For later reference, we record that, upon introducing the tilted Green's function 
\begin{equation}\label{eq:g-tilted}\tilde{g}(x,y) = \frac{1}{f(y)^2} \widetilde{E}_x^f\Big[\int_0^{\infty} {1}\{ X_s = y\} \mathrm{d}s\Big],
\end{equation} 
one has in analogy with \eqref{eq:last-exit} that
\begin{equation}
\label{eq:greenseqidentity}
1= \sum_{y \in K } \tilde{g}(x,y)\tilde{e}_{K}(y), \text{ for all } x \in K.
\end{equation}

 Following are the two key properties of the measure $\widetilde{\mathbb{P}}_{f}$ defined by \eqref{eq:PtildeAllLvlsdefn}. 
 The first is a bound on the cost of tilting by $f$ in terms of the relative entropy $H(\widetilde{\mathbb{P}}_{f} \vert \P)\stackrel{\text{def.}}{=} \widetilde{\mathbb{E}}_{f}\big[ \log \big( \frac{\mathrm{d} \widetilde{\mathbb{P}}_{f}}{\mathrm{d} \P} \big) \big]$.
\begin{prop} \label{P:entropy} There exists $\Cl{C:entropy} \in (0,\infty)$ such that for all $u \in (0, \infty)$, if $d=3$, the bound
 \begin{equation}
 \label{eq:entropy-1}
 {\limsup_{\delta \downarrow 0 }} \  {\limsup_{\varepsilon \downarrow 0 }} \ {\limsup_{|x| \to 
 \infty }} \  \frac{\log|x|}{|x|}H(\widetilde{\mathbb{P}}_{f} \vert \P) \leq \Cr{C:entropy} (\sqrt{u} - 
\sqrt{u_*})^2  + C\eta^2
 \end{equation}
 holds for $f=f^{u(1-\eta \sqrt{\delta}),u; \varepsilon}$ and all $ \eta \in [0, c)$. Moreover, one 
 can choose $\Cr{C:entropy}=\frac\pi3$.
 \end{prop}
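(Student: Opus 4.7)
Since under $\widetilde{\mathbb{P}}_f$ the canonical point measure is Poissonian with intensity $e^{F_f^*}(\nu\otimes\lambda)$, the entropy reads
\[
H(\widetilde{\mathbb{P}}_f\mid\mathbb{P}) = \widetilde{\mathbb{E}}_f\big[\langle\omega,F_f^*\rangle\big] = u\cdot E_{e_U}\big[F_f(X)\,e^{F_f(X)}\big],
\]
where one exploits that $F_f^*$ is supported on $W_U^*\times[0,u]$ and the standard identity $\int G(s_U^+ w^*)\,d\nu|_{W_U^*} = E_{e_U}[G(X)]$. The Doob relation $d\widetilde{P}_x^f/dP_x = e^{F_f(X)}/f(x)$ from \eqref{eq:tiltedPathDef}, combined with Fubini, the occupation-density identity \eqref{eq:g-tilted}, and $V_f f = -\Delta f$, gives
\[
E_x\big[F_f(X)e^{F_f(X)}\big] = f(x)\widetilde{E}_x^f[F_f(X)] = -f(x)\sum_y \tilde g(x,y)\,f(y)\,\Delta f(y),
\]
so that $H = -u\sum_{x,y} e_U(x)\,f(x)\,\tilde g(x,y)\,f(y)\,\Delta f(y)$.

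The next step is to collapse the $x$-sum using the boundary behaviour of $f$ on $\partial U$, which supports $e_U$. For $x\in\partial U$ one has $h_1(x)=0$ (since $x\notin T^3$) and, by \eqref{eq:last-exit} together with the crude bound $\mathrm{cap}(T^6)=O(|x|/\log|x|)$, also $h_2(x)=O(1/\log|x|)$, so $f(x)=1+o(1)$ uniformly on $\partial U$. A parallel estimate for \eqref{eq:eq-tilted}, exploiting that the tilted walk agrees with the SRW outside $U$ (where $f\equiv 1$), yields $\tilde e_U(x)=(1+o(1))\,e_U(x)$ on $\partial U$. Because $\tilde g$ is symmetric (both sides equal $G^V(x,y)/[f(x)f(y)]$, with $G^V$ the symmetric Feynman--Kac Green function of $\Delta+V_f$), \eqref{eq:greenseqidentity} reads $\sum_x \tilde e_U(x)\tilde g(x,y) = 1$ for all $y\in U$. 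Combining these inputs yields $\sum_x e_U(x)f(x)\tilde g(x,y) = 1+o(1)$ uniformly in $y\in U$; substituting into the display for $H$ above and invoking the discrete Green's formula $\sum_y f\,\Delta f = -\mathcal E(f,f)$ produces
\[
H = (1+o(1))\cdot u\cdot \mathcal E(f-1,f-1),
\]
with $\mathcal E$ the standard Dirichlet form on $\mathbb Z^d$.

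To finish, write $f-1 = \alpha h_2 - \beta h_1$ with $\alpha=\sqrt{(u_*+\varepsilon)/v}-1$ and $\beta=\sqrt{(u_*+\varepsilon)/v}-\sqrt{(u_*-\varepsilon)/u}$, and expand bilinearly. The cross term $\mathcal E(h_1,h_2)$ is negligible because $h_2\equiv 1$ on $T^6\supset T^3\supset\mathrm{supp}(\nabla h_1)$, while the two diagonal terms are relative capacities: $\mathcal E(h_1,h_1)=\mathrm{cap}_{T^3}(T^2)$ and $\mathcal E(h_2,h_2)=\mathrm{cap}_U(T^6)$. The essential inputs are then the oblique-tube bounds
\[
\mathrm{cap}_U(T^6)\le (1+o(1))\cdot \frac{\pi}{3(1-5\delta)}\cdot \frac{|x|}{\log|x|},\qquad \mathrm{cap}_{T^3}(T^2)=O\big(|x|/(\delta\log|x|)\big),
\]
which follow from Proposition~\ref{L:hit} via \eqref{eq:last-exit}; the sharp constant $\pi/3 = 1/(2\Cr{c:Green})$ is dictated by the Green asymptotic \eqref{eq:c_2}. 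Since $u\alpha^2\to(\sqrt{u_*}-\sqrt u)^2$ as $\varepsilon\downarrow 0$ and $\beta^2=O(u_*\eta^2\delta/u)$, this gives
\[
\frac{\log|x|}{|x|}H \le \frac{\pi(\sqrt{u_*}-\sqrt u)^2}{3(1-5\delta)} + C\eta^2 + o(1),
\]
and taking the three limsups in the prescribed order yields the claim with $\Cr{C:entropy}=\pi/3$. The main obstacle is the \emph{sharp} oblique-tube capacity bound with the right pre-factor $\pi/3$: axis-aligned projection arguments (as used in \cite{gosrodsev2021radius} for the GFF analogue) are unavailable in the rotationally invariant setting, which is precisely why Section~\ref{sec:oblique} develops the non-standard martingale argument behind Proposition~\ref{L:hit}, and why uniform control on the $(1+o(1))$ corrections in the replacement $e_U(x)f(x)\leadsto\tilde e_U(x)$ requires care.
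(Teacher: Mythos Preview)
Your overall architecture matches the paper's: reduce $H(\widetilde{\mathbb P}_f\mid\mathbb P)$ to $u\,\mathcal E(f,f)$, decompose the Dirichlet form via $f-1=\alpha h_2-\beta h_1$ (with vanishing cross term), and feed in the oblique-tube capacity bounds to extract the constant $\pi/3$. The endgame---Lemmas~\ref{L:Dirichlet} and~\ref{L:cap} in the paper---is essentially what you outline.

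The difference lies in how you reach $H\approx u\,\mathcal E(f,f)$, and here you take an unnecessary detour that also contains a gap. You write $H=u\,E_{e_U}[F_f e^{F_f}]$, convert via the Doob relation to $u\sum_x e_U(x)f(x)\widetilde E_x^f[F_f]$, and then claim $\sum_x e_U(x)f(x)\tilde g(x,y)=1+o(1)$ by arguing that $\tilde e_U(x)=(1+o(1))e_U(x)$ on $\partial U$. The justification you give---``the tilted walk agrees with the SRW outside $U$''---is not sufficient: on $\partial^{\mathrm{out}}U$ the tilted jump rates into $U$ are perturbed by $\alpha h_2(\cdot)=O(1/\log|x|)$, and since the walk may visit $\partial^{\mathrm{out}}U$ many times (the expected occupation of this surface from a nearby point is of order $|x|$ in $d=3$), the cumulative effect on the escape probability is not manifestly $o(1)$. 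Establishing $\widetilde P_x^f[\tilde H_U=\infty]=(1+o(1))P_x[\tilde H_U=\infty]$ uniformly over $\partial U$ would require a separate argument that you do not supply.

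The paper sidesteps this entirely. Instead of passing through the untilted $e_U$, it uses \eqref{eq:mu_K} directly: under $\widetilde{\mathbb P}_f$ the process $\mu_U$ is already Poisson with intensity $\widetilde P^f_{\tilde e_U}\otimes\lambda$, so Campbell gives $H=u\,\widetilde E^f_{\tilde e_U}[F_f]$ with the \emph{tilted} equilibrium measure from the start. Then \eqref{eq:greenseqidentity} yields $\sum_x \tilde e_U(x)\tilde g(x,y)=1$ \emph{exactly} for every $y\in U$, and one lands on the exact identity $H=u\,\mathcal E(f,f)$ with no $o(1)$ correction and no comparison of equilibrium measures needed. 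This is both simpler and sharper than your route.

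A minor point: the sharp bound $\mathrm{cap}(T^6)\le(1+C\delta)\tfrac{\pi|x|}{3\log|x|}$ in \eqref{eq:cap-1} is not derived in the paper from Proposition~\ref{L:hit} but quoted from external references; Proposition~\ref{L:hit} is used (only) for the relative bound \eqref{eq:cap-3} on $\mathrm{cap}_{T^3}(T^2)$.
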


The second characteristic feature of the measure $\widetilde{\mathbb{P}}_{f}$  is the mass 
it assigns to the event $A^u(x,\delta)$ from \eqref{eq:A-final}, which involves the event 
$G^u$. In the next proposition all assumptions on $G^u$ made in Theorem~\ref{T:LB-gen} 
are tacitly assumed to hold; in particular, this includes \eqref{eq:LB-final-2}. Recall the 
dependence of $A^u(x,\delta)$ on the sprinkling parameter $\eta\in [0,1)$, which is implicit 
in our notation. 

\begin{prop} \label{P:typical} For all $\delta \in (0,1)$, $u \in (0, \infty)$, $ \eta \in [0, c)$ and $\varepsilon \in (0,u_* (1 \wedge \frac{ \eta \sqrt{\delta}}{10}))$, 
 \begin{align}
 &\label{eq:typical-1}  \lim_{|x| \to \infty} \widetilde{\mathbb{P}}_{f}[A^u(x,\delta)] =1 \  \text{ 
 	holds with $f=f^{u(1-\eta \sqrt{\delta}),u; \varepsilon}$.}
 \end{align}
 \end{prop}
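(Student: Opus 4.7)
The plan is to decompose the complement $(A^u(x,\delta))^c$ via a union bound into three pieces corresponding to the three conjuncts in \eqref{eq:A-final}, namely (i) failure of the good event $G^u$, (ii) failure of the connection between $B(0,r_1)$ and $B(x,r_1)$ inside $\mathcal{V}^u\cap T^1$, and (iii) failure of the disconnection of $T^4$ from $\partial T^5$ inside $\mathcal{V}^v$ with $v=u(1-\eta\sqrt{\delta})$. Each piece will be controlled using the forthcoming coupling Proposition~\ref{P:coupling} from Section~\ref{sec:coupling}, which formalises the heuristic \eqref{eq:LU-intro2}: on any region where $f$ is constant, the tilted vacant set at label $u$ (resp.~$v$) admits a coupling with a standard random interlacement vacant set at effective level $u f(\cdot)^2$ (resp.~$v f(\cdot)^2$), with vanishing defect as $|x|\to\infty$. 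By the explicit form \eqref{eq:chosenf-2} of $f$, the effective levels are exactly $u_*-\varepsilon$ on $T^2$ and $u_*+\varepsilon$ on $T^6\setminus T^3$.

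\textbf{Connection and disconnection.} For piece (ii), since $T^1\subset T^2$ and $f$ is constantly $\sqrt{(u_*-\varepsilon)/u}$ on $T^2$, the coupling delivers with probability $1-o_{|x|}(1)$ a standard random interlacement at level $u_*-\varepsilon$ satisfying $\mathcal{V}^{u_*-\varepsilon}\cap T^1\subseteq\widetilde{\mathcal{V}}^u\cap T^1$. As $u_*-\varepsilon<u_*$ is supercritical and $T^1$ is a cylinder of length $\sim|x|$ and transverse width $\sim|x|^\delta$, chaining local uniqueness estimates valid throughout the supercritical regime (see, e.g., \cite{MR3602841,RI-I}) produces with probability tending to $1$ a crossing cluster of $\mathcal{V}^{u_*-\varepsilon}\cap T^1$ joining $B(0,r_1)$ to $B(x,r_1)$. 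For piece (iii), the annular region $T^5\setminus T^4$ lies inside $T^6\setminus T^3$, where $f$ is constantly $\sqrt{(u_*+\varepsilon)/v}$. The coupling then supplies $\widetilde{\mathcal{V}}^v\cap(T^5\setminus T^4)\subseteq\mathcal{V}^{u_*+\varepsilon}\cap(T^5\setminus T^4)$, so that the desired disconnection follows from subcritical one-arm estimates for $\mathcal{V}^{u_*+\varepsilon}$ across a slab of thickness $\sim|x|^{4\delta}$ (using \cite{PopTeix,prevost2023passage,RI-I}).

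\textbf{Good event and main obstacle.} For piece (i), write $G^u=\bigcap_{i\in I}G^u_{B_i}$ with each $B_i$ centred in $T(x)$ and of radius $10r_1\ll|x|^\delta$, so $B_i\subset T^2$; each $G^u_{B_i}$ is determined by two occupation fields at labels in the bounded grid $\{u(1-\tfrac{k}{4}\eta\sqrt{\delta}):0\le k\le 4\}$. Applying Proposition~\ref{P:coupling} at each of these labels inside the plateau $T^2$ transfers $\widetilde{\mathbb{P}}_f[(G^u_{B_i})^c]$ to a probability under standard random interlacements at corresponding effective levels, all of which lie in $[u_*/2,2u_*]$ thanks to the constraint $\varepsilon<u_*\eta\sqrt{\delta}/10$; hypothesis \eqref{eq:LB-final-2} combined with the finite union bound over $i\in I$ then finishes piece (i). The principal obstacle is the invocation of Proposition~\ref{P:coupling} at two different labels simultaneously, as is needed to handle the joint non-monotone connection--disconnection event (connection at $u$, disconnection at $v<u$) for which a direct FKG argument is unavailable. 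The nested geometry $T^1\subset\cdots\subset T^6$ has been arranged precisely so that each of the subevents (i)--(iii) is supported inside a region where $f$ is already constant, so that the harmonic interpolation zones of $f$ do not have to be controlled in order to apply the coupling cleanly.
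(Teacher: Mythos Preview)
Your overall strategy---union bound into three pieces, apply the coupling of Proposition~\ref{P:coupling} to reduce each piece to a standard interlacement estimate at a super- or sub-critical effective level, then invoke supercritical local uniqueness for the connection and subcritical one-arm decay for the disconnection---is exactly the paper's approach. The handling of piece~(i) via \eqref{eq:LB-final-2}, including the role of the constraint $\varepsilon<u_*\eta\sqrt{\delta}/10$ in preserving the ordering of the two effective levels, is also right.

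There is, however, a genuine gap in your treatment of piece~(ii) (and to a lesser extent (iii)). You write that the coupling ``delivers a standard random interlacement at level $u_*-\varepsilon$ satisfying $\mathcal{V}^{u_*-\varepsilon}\cap T^1\subseteq\widetilde{\mathcal{V}}^u\cap T^1$'', and then build the crossing in this single untilted $\mathcal{V}^{u_*-\varepsilon}$. But Proposition~\ref{P:coupling} only furnishes \emph{box-wise} couplings $Q_B$ for $B=B_1^z$; the untilted fields arising in $Q_{B_1^z}$ and $Q_{B_1^{z'}}$ for $z\neq z'$ are not the same process, so there is no single $\mathcal{V}^{u_*-\varepsilon}$ living on all of $T^1$ with the stated global inclusion. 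The correct chaining argument (as the paper carries out) works differently: for each $z\in T(x)$ one uses the \emph{two-sided} sandwich \eqref{eq:coupling-1} to transfer the local-uniqueness event $\mathrm{LU}_z$ for $\widetilde{\mathcal{V}}^u$ (which needs both existence and uniqueness of crossing clusters, hence both inclusion directions) to an untilted $\mathrm{LU}$ event at two supercritical levels, shows the latter is typical via \cite{RI-I}, and then chains the events $\mathrm{LU}_z(\widetilde{\mathcal{V}}^u)$ themselves along $T(x)$. Your one-sided inclusion would give existence but not uniqueness of a crossing cluster for $\widetilde{\mathcal{V}}^u$ in each annulus, and without uniqueness the chaining breaks. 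Similarly, for piece~(iii) the paper applies the coupling at each $z\in\partial T^4$ separately and takes a union bound over $z$; there is no single coupling over the slab $T^5\setminus T^4$.
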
 
 
 The proofs of Propositions~\ref{P:entropy} and~\ref{P:typical} are  postponed to later sections. We now conclude the proof of Theorem~\ref{T:LB-gen} using these two results.
 
 \begin{proof}[Proof of Theorem~\ref{T:LB-gen}]
Let $u \in (0, \infty)$ and $f = f^{u(1-\eta \sqrt{\delta}),u; \varepsilon}$ for $\varepsilon > 0$. 
Since $\widetilde{\P}_{f}$ is 
 absolutely continuous with respect to $\P$ in view of \eqref{eq:PtildeAllLvlsdefn},~one 
 classically obtains by Jensen's inequality (see for instance the discussion following (2.7) in 
 \cite{BDZ95} for a proof) that for all $x \in \mathbb{Z}^d$ and $\delta \in (0,1)$,
 \begin{equation}\label{eq:gen.entrop.lower}
	\log	\P[A^u(x, \delta)]\geq \log (\widetilde{\P}_f[A^u(x, \delta)]) -\frac{H(\widetilde{\P}_f|\P)+e^{-1}}{\widetilde{\P}_f[A^u(x, \delta)]}.
\end{equation}
Multiplying by $\frac{\log|x|}{|x|}$ on both sides of \eqref{eq:gen.entrop.lower} and 
subsequently letting first $|x| \to \infty$, $\varepsilon \downarrow 0$ and $\delta \downarrow 
0$, the claim \eqref{eq:LB-final-3} (with $0 \leq \eta < c$) follows upon inserting the bounds 
\eqref{eq:entropy-1} and \eqref{eq:typical-1} on the right-hand side of 
\eqref{eq:gen.entrop.lower}.
 \end{proof}

 \section{
 	Relative entropy estimate}
 \label{sec:entropy}
 In the previous section, the proof of Theorem~\ref{T:LB-gen} was completed, subject to the validity of two results, stated as Propositions~\ref{P:entropy} and~\ref{P:typical}. In the present section we prove Proposition~\ref{P:entropy}. As will turn out, the estimate on the relative entropy will involve bounding potential theoretic quantities related to the tubes $T^i$, which recall are oriented towards $\frac{x}{|x|}$. Importantly, 
 the bounds derived need to be sufficiently sharp to make the correct (rotationally invariant!) 
 functional $\frac{\log |x|}{|x|}$ in \eqref{eq:entropy-1} appear in the large-scale limit. This is a 
 somewhat delicate matter when $x$ is in generic position and one cannot exploit (lattice) 
 symmetries.

 We start with some preparation.
 For $f: \Z^d \to \R$, we consider the Dirichlet form $\mathcal{E}(f,f)$ associated to the random walk, defined as
  \begin{equation}
  \label{eq:Dirichlet}
  \mathcal{E}(f,f)= \frac12 \sum_{ |x-y|=1} \frac1{2d} (f(x)-f(y))^2,
  \end{equation}
  where the sum ranges over all points $x,y \in \mathbb{Z}^d$ satisfying the constraint. Let 
  $H^2= \{ f: \Z^d \to \R:   \mathcal{E}(f,f) < \infty \}$. For $f,g \in H^2$, the energy $ 
  \mathcal{E}(f,g)$ is declared by polarization. Recall the functions $f =  {f}^{v,u; 
  \varepsilon}$ from \eqref{eq:f1hatdef}, which depend implicitly on both $x 
\in \mathbb{Z}^d$ and $\delta \in (0,1)$ via the choice of sets $T^i$ and $U$, 
cf.~\eqref{eq:T_i} and \eqref{eq:T_i-U}, entering their definition. Recall the notation for the (relative) capacity from below \eqref{eq:e_K,U}.
  
  \begin{lemma}   \label{L:Dirichlet} For all $\varepsilon > 0$, $0 < v \leq u$ and $f$ as in \eqref{eq:f1hatdef}, one has $f \in H^2$. Moreover, for $|x| \geq C(\delta)$,
 \begin{align}
 & \mathcal{E}(f, f) = v^{-1}\big[ \big(\sqrt{u_{*}+\varepsilon}-\sqrt{v}\big)^2 \textnormal{cap}_U(T^6) + \big(\sqrt{u_{*}+\varepsilon}-\sqrt{ v/u} \sqrt{u_{*}-\varepsilon}\big)^2 \textnormal{cap}_{T^3}(T^2) \big].  \label{eq:Dirichlet-2}
 \end{align}
  \end{lemma}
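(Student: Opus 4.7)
The plan is to exploit the linear-algebraic structure of $f$, rewriting it as $f = 1 + a h_2 - b h_1$ with $a = \sqrt{(u_*+\varepsilon)/v}-1$ and $b = \sqrt{(u_*+\varepsilon)/v} - \sqrt{(u_*-\varepsilon)/u}$. Since constants have vanishing Dirichlet energy and $\mathcal{E}$ is a symmetric bilinear form on $H^2$, expanding yields
\begin{equation*}
\mathcal{E}(f,f) = a^2 \mathcal{E}(h_2,h_2) + b^2 \mathcal{E}(h_1,h_1) - 2ab\, \mathcal{E}(h_1,h_2).
\end{equation*}
The two diagonal terms are then identified via the classical equilibrium-potential identity, and the cross term is shown to vanish by a support argument.

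The identification of the diagonal terms uses that $h_1$, resp.~$h_2$, is the equilibrium potential of $T^2$ in $T^3$, resp.~of $T^6$ in $U$. A discrete Green's identity $\mathcal{E}(h,h) = \sum_x h(x)(-\Delta h)(x)$ (obtained from \eqref{eq:Dirichlet} by summation by parts, valid since $h_i$ is supported on a finite set plus $T^6$ is absorbed by the indicators appearing) combined with the fact that $-\Delta h_i$ is proportional to the equilibrium measure of the relevant set relative to its enclosing domain (recall~\eqref{eq:e_K,U}) yields $\mathcal{E}(h_2,h_2) = \textnormal{cap}_U(T^6)$ and $\mathcal{E}(h_1,h_1) = \textnormal{cap}_{T^3}(T^2)$. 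As both these capacities are finite for the sets in \eqref{eq:T_i}, this also establishes $f \in H^2$. Substituting and recognizing $a^2 = v^{-1}(\sqrt{u_*+\varepsilon} - \sqrt{v})^2$ and $b^2 = v^{-1}(\sqrt{u_*+\varepsilon} - \sqrt{v/u}\sqrt{u_*-\varepsilon})^2$ produces the right-hand side of \eqref{eq:Dirichlet-2}.

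The main (yet mild) step is to argue that $\mathcal{E}(h_1,h_2) = 0$ once $|x| \geq C(\delta)$. In view of \eqref{eq:f1hatdef}, $h_1 \equiv 1$ on $T^2$, $h_1 \equiv 0$ off $T^3$, and is harmonic in between, so any edge $\{y,y'\}$ with $h_1(y) \neq h_1(y')$ has at least one endpoint in $T^3 \setminus T^2$, hence both endpoints in the $1$-neighborhood of $T^3$. Symmetrically, any edge contributing to $\nabla h_2$ has both endpoints in the $1$-neighborhood of $U \setminus T^6$. By \eqref{eq:T_i} these neighborhoods are separated by a distance exceeding $2$ as soon as $|x|^{5\delta} - |x|^{3\delta} \geq C$, i.e.~for $|x| \geq C(\delta)$. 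Hence no edge contributes simultaneously to both gradients and the cross term vanishes edge by edge in \eqref{eq:Dirichlet}.

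I expect no serious obstacle: the only technicality is the threshold $|x| \geq C(\delta)$ ensuring the separation $d(T^3, U\setminus T^6) \geq 2$, which is built into the scale choice in \eqref{eq:T_i} and in the observation that $U = B(0,10^3|x|)$ is much larger than $T^6$.
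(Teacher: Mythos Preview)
Your proof is correct and follows essentially the same route as the paper: write $f-1$ as $a h_2 - b h_1$, expand $\mathcal{E}(f,f)$ bilinearly, kill the cross term $\mathcal{E}(h_1,h_2)$ by a support-separation argument (the paper phrases this as ``for every edge either $h_1$ or $h_2$ is constant since the $1$-neighborhood of $T^3$ lies in $T^6$''), and identify the diagonal terms with the relative capacities via the classical identity $\mathcal{E}(V,V)=\textnormal{cap}_U(K)$ for the equilibrium potential $V$. The only cosmetic difference is that the paper deduces $f\in H^2$ directly from $f$ being constant outside the finite set $U$, whereas you deduce it from finiteness of the capacities; both are fine.
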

  \begin{proof}
 On account of \eqref{eq:chosenf-2} and \eqref{eq:T_i-U}, the function $f$ is constant 
 outside the finite set $U$ hence the sum in \eqref{eq:Dirichlet} is  effectively finite, whence 
 $f \in H^2$. We now show \eqref{eq:Dirichlet-2}. 
  
Introducing the shorthands $\alpha= ({\frac{u_{*}+\varepsilon}{v}})^{1/2} -1 $ and $\beta= 
({\frac{u_{*}+\varepsilon}{v})^{1/2} - (\frac{u_{*}-\varepsilon}{u}})^{1/2}$, the formula 
\eqref{eq:f1hatdef} defining $f$ reads $f - 1=  \alpha h_2- \beta h_1$. The functions 
$h_1(x) = P_x[H_{T^2} < T_{T^3}]$ and $h_2(x) = P_x[H_{T^6} < T_{U}]$ have the property 
that for any neighboring pair of points $x,y \in\Z^d$, $h_1(x)=h_1(y)$ or $h_2(x)=h_2(y)$ 
whenever $|x| \geq C(\delta)$; indeed for such $x$ we may assume in view of \eqref{eq:T_i} 
that the $1$-neighborhood of $T^3$ is contained in $T^6$. It follows that 
$h_2(x)=h_2(y)=1$ whenever at least one of the neighbors $x,y$ lies in $T^3$, whereas 
$h_1(x)=h_1(y)=0$ whenever $x,y \notin T^3$. All in all, it follows that 
$\mathcal{E}(h_1,h_2)=0$. Hence,
\begin{equation}
\label{eq:Dirichlet-3}
\mathcal{E}(f,f)= \mathcal{E}(f-1,f-1) = \alpha^2 \mathcal{E}(h_2,h_2) + \beta^2 \mathcal{E}(h_1,h_1).
\end{equation}
The claim \eqref{eq:Dirichlet-2} now follows from \eqref{eq:Dirichlet-3} using the classical fact (see for instance \cite[Prop.~1.9]{MR2932978} for a similar statement in a slightly different setup) that $\textnormal{cap}_{U}(K) = \mathcal{E}(V,V)$ with $V(x) = P_x[H_{K} < T_{U}]$ for all $U \subset \subset \mathbb{Z}^d$, $ K \subset U$, $K \neq \emptyset$.
  \end{proof}
  
Next we collect bounds for the relative capacities involved in Lemma~\ref{L:Dirichlet}. 
The proof of \eqref{eq:cap-3} below crucially involves 
Proposition~\ref{L:hit}.

 \begin{lemma} \label{L:cap}For all $\delta \in (0,\frac12)$, $|x| \geq C(\delta)$ and $i=1,\dots,6$, when $d=3$,
  \begin{align}
  &\label{eq:cap-1}  \textnormal{cap}(T^i) \leq (1+C \delta) \frac{\pi |x|}{3 \log|x|},\\
  &\textnormal{cap}_{U}(T^i) \leq \Big(1 + \frac{C}{\log |x|}\Big) \textnormal{cap}(T^i),   \label{eq:cap-2}\\[0.3em]
  &\textnormal{cap}_{T^3}(T^2)\leq C\delta^{-1} \textnormal{cap}(T^2).  \label{eq:cap-3}
  \end{align}
 \end{lemma}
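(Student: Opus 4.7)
I would treat \eqref{eq:cap-1}--\eqref{eq:cap-3} in order, with \eqref{eq:cap-1} doing the bulk of the work and \eqref{eq:cap-2}--\eqref{eq:cap-3} following by short applications of the strong Markov property, using respectively \eqref{eq:cap-1} itself and Proposition~\ref{L:hit}.

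For \eqref{eq:cap-1}, the plan is a test-function upper bound $\textnormal{cap}(T^i) \le \mathcal{E}(f,f)$ with $f \ge \mathbf{1}_{T^i}$ of compact support. Since $T^i$ is the $r_i$-thickening (with $r_i = (|x|\vee 100)^{i\delta}$ for $i \le 5$, and $r_6 = 4(|x|\vee 100)^{5\delta}$) of the oblique segment $T(x)$, the natural choice is $f(y) = h(d(y,T(x)))$, where $h\colon[0,\infty)\to[0,1]$ is the two-dimensional harmonic profile $h(d) = 1$ for $d \le r_i$, $h(d) = \log(|x|/d)/\log(|x|/r_i)$ for $r_i \le d \le |x|$, and $h(d) = 0$ otherwise. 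By the co-area formula and $|\nabla d(\cdot, T(x))| \equiv 1$ a.e., the continuous Dirichlet integral reduces to a one-dimensional integral against the surface area $A(d) = 2\pi d |x| + 4\pi d^2$ of the $d$-neighborhood of the segment, which gives $\int_{\R^3}|\nabla f|^2 = \frac{2\pi|x|}{(1-i\delta)\log|x|}(1+o(1))$. The standard lattice-continuum comparison $\mathcal{E}(f,f) = \frac{1}{6}\int|\nabla f|^2(1+o(1))$, valid because all gradients of $f$ live on scales $\ge r_1 \gg 1$, then yields \eqref{eq:cap-1}; note the prefactor $\frac{1}{6}\cdot 2\pi = \frac\pi3 = \frac{1}{2\Cr{c:Green}}$ matches what one expects from \eqref{eq:c_2}.

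For \eqref{eq:cap-2} with $K = T^i$, one writes $e_{K,U}(y) - e_K(y) = P_y[T_U < \widetilde{H}_K < \infty]\mathbf{1}_K(y)$ and applies the strong Markov property at $T_U$ (at which time $X_{T_U} \in \partial^{\text{out}} U$) to bound $P_y[T_U < \widetilde{H}_K < \infty] \le \big(\sup_{z \in \partial^{\text{out}} U} P_z[H_K < \infty]\big) \cdot e_{K,U}(y)$. Since $U = B(0, 10^3|x|) \supset T^6 \supset K$, for any $z \in \partial^{\text{out}} U$ and $w \in K$ one has $|z - w| \ge c|x|$; then \eqref{eq:c_2} gives $g(z,w) \le C/|x|$, so by \eqref{eq:last-exit} and \eqref{eq:cap-1},
\[
\sup_{z \in \partial^{\text{out}} U} P_z[H_K < \infty] \le C \,\textnormal{cap}(K)/|x| \le C'/\log|x|.
\]
Summing the first display over $y \in K$ and rearranging yields \eqref{eq:cap-2}.

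For \eqref{eq:cap-3}, Proposition~\ref{L:hit} applied with parameters $(\delta', \varepsilon') = (3\delta, \tfrac13)$ -- so that $|x|^{\delta'}$ and $|x|^{(1-\varepsilon')\delta'}$ coincide with the radii of $T^3$ and $T^2$ respectively -- yields $P_z[H_{T^2} = \infty] \ge c\delta$ uniformly in $z \notin T^3$, provided $|x| \ge C(\delta)$. For $y \in T^2$, the strong Markov property at $T_{T^3}$ gives
\[
e_{T^2}(y) = P_y[\widetilde{H}_{T^2} = \infty] \ge E_y\big[P_{X_{T_{T^3}}}[H_{T^2} = \infty]\,;\,\widetilde{H}_{T^2} > T_{T^3}\big] \ge c\delta \cdot e_{T^2, T^3}(y),
\]
and summation over $y \in T^2$ concludes \eqref{eq:cap-3}. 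The main obstacle is the \emph{sharp} prefactor $\pi/3$ in \eqref{eq:cap-1}: recovering the rotationally invariant constant on the lattice uniformly in the direction $e = x/|x|$ requires the continuum-discrete Dirichlet form comparison to have error uniform in $e$, which goes through because the level sets of $f$ (oblique tubes of radius $\in [r_i, |x|]$) have curvature on scales $\gg 1$ as soon as $r_i \ge |x|^\delta \to \infty$.
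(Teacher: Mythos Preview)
Your proofs of \eqref{eq:cap-2} and \eqref{eq:cap-3} are essentially identical to the paper's: both use the strong Markov property at $T_U$ (resp.~$T_{T^3}$) together with the hitting estimate derived from \eqref{eq:last-exit} and \eqref{eq:cap-1} (resp.~Proposition~\ref{L:hit}). Your explicit choice of parameters $(\delta',\varepsilon')=(3\delta,\tfrac13)$ in the application of Proposition~\ref{L:hit} is exactly what is implicit in the paper's one-line reference.

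For \eqref{eq:cap-1} you take a genuinely different route. The paper simply cites external results (a capacity bound for oblique discretized segments from \cite{prevost2023passage} combined with \cite{gosrodsev2021radius}), whereas you give a direct, self-contained argument via the variational upper bound $\textnormal{cap}(T^i)\le \mathcal{E}(f,f)$ with the explicit logarithmic test function $f(y)=h(d(y,T(x)))$. Your computation via the co-area formula and the surface area $A(d)=2\pi d|x|+4\pi d^2$ is clean and recovers the sharp constant $\tfrac\pi3$ transparently from the factor $\tfrac1{2d}$ in the Dirichlet form. The trade-off is that the paper's citations already contain the discrete-to-continuous comparison you need, while your approach has to justify that $\mathcal{E}(f,f)=\tfrac16\int|\nabla f|^2(1+o(1))$ uniformly in the direction $e$; you correctly identify this as the only subtle point and note that it goes through because $f$ has second derivatives of order $O(d^{-2}/\log|x|)$ on the support of $\nabla f$, where $d\ge r_i\to\infty$, so the lattice error per edge is $O(|\nabla f|\cdot|\nabla^2 f|)=o(|\nabla f|^2)$ uniformly. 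Your approach has the advantage of being portable to other graphs with Green's function asymptotics like \eqref{eq:c_2}, in the spirit of the robustness the paper advertises for Proposition~\ref{L:hit}.
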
 
 \begin{proof}
 The bound \eqref{eq:cap-1} can be obtained by combining \cite[(2.24)]{prevost2023passage} and \cite[Lemma 2.2]{gosrodsev2021radius} (see, e.g., \cite[below (5.9)]{drewitz2023arm} for a similar argument). Next, we aim to show \eqref{eq:cap-2}. To do this, consider $y \notin U$ and notice that $\min_{z\in\partial T_i} |{y-z}| \geq 9|x|$ on account of \eqref{eq:T_i-U} whence $g(y,z) \leq C|x|^{-1}$ for $z \in T^i$. Using this, a last-exit decomposition, and \eqref{eq:cap-1}, it follows that $P_y[H_{T^i} < \infty] \leq C |x|^{-1} \text{cap}(T^i) \leq C' (\log |x|)^{-1}$. For $z \in T^i$, decomposing the (bare) equilibrium measure of $T^i$ over the exit location of $U$ and feeding this bound yields that
\begin{equation*}
 P_x[\widetilde{H}_{T^i} = \infty] = \sum_{y \in \partial U} P_x[\widetilde{H}_{T^i} > T_{U}, X_{T_{U}} = y] P_y[H_{T^i} = \infty] \\
\geq \Big(1 - \frac{C}{\log |x|}\Big) P_x[\widetilde{H}_{T^i} > T_{U}].
\end{equation*}
Summing both sides over $x\in  T^i$ and rearranging yields \eqref{eq:cap-2}. 
It remains to prove \eqref{eq:cap-3}. Using the inequality
\begin{equation}
\label{eq:cap-10}
e_{T^2}(z) \geq e_{T^2, T^3}(z) \inf_{y \notin T^3} P_y[H_{T^2}=\infty]
\end{equation}
valid for all $z \in \text{supp}(e_{T^2})$, which follows readily from \eqref{eq:e_K,U} upon applying the strong Markov property at time $T_{T^3}$, one obtains \eqref{eq:cap-3} by summing \eqref{eq:cap-10} over $z$ and using 
Proposition~\ref{L:hit}.
 \end{proof}
 
 Equipped with Lemmas~\ref{L:Dirichlet} and \ref{L:cap}, we are ready to proceed with the:
 
 \begin{proof}[Proof of Proposition~\ref{P:entropy}]
Let $f = f^{u(1 - \eta\sqrt{\delta}), u; \varepsilon}$. The relative entropy 
$H(\widetilde{\mathbb{P}}_{f} \vert \P)$ can be recast as
\begin{equation}\label{eq:entropy-pf-1}
H(\widetilde{\mathbb{P}}_{f} \vert \P)=\widetilde{\mathbb{E}}_{f}\Big[ \log \Big( \frac{\mathrm{d} \widetilde{\mathbb{P}}_{f}}{\mathrm{d} \P} \Big) \Big] 
\stackrel{\eqref{eq:PtildeAllLvlsdefn}}{=} \widetilde{\mathbb{E}}_{f}\big[ \langle \omega ,F_f^* \rangle \big] \stackrel{\eqref{eq:mu_K}}{=} u \widetilde{E}_{\tilde{e}_U}^f 
[F_f(X)],
\end{equation} 
where in applying \eqref{eq:mu_K} one notes that the Poisson variable $\langle \omega ,F_f^* \rangle$ depends on $\omega$ `through' $\mu_U$ (recall 
\eqref{def:muK}) in view of \eqref{eq:FdefonW+} and since $V_f$ vanishes outside $U$ and further only the pairs $(w^\ast, v)$ with $v \in [0, u]$ contribute to $\langle 
\omega ,F_f^* \rangle$ because of \eqref{eq:F^*} . By definition of $F_f$ in \eqref{eq:FdefonW+} one obtains, noting that all sums below are 
effectively finite, that
\begin{multline}\label{eq:entropy-pf-2}
\widetilde{E}_{\tilde{e}_U}^f [F_f(X)]= \int_0^{\infty} \widetilde{E}_{\tilde{e}_U}^f [V_f(X_s)] \mathrm{d}s \stackrel{\eqref{eq:g-tilted}}{=}\sum_{y,z} \tilde{e}_U(y)\tilde{g}(y,z) f^2(z) V_f(z)\\ 
\stackrel{\eqref{eq:greenseqidentity}}{=}\sum_{z }  f^2(z) V_f(z) \stackrel{\eqref{eq:FdefonW+}}{=} \sum_{z }  f(z) (-\Delta f)(z) \stackrel{\eqref{eq:Dirichlet}}{=} \mathcal{E}(f,f),
\end{multline}
where the last equality follows by a discrete version of the Gauss-Green theorem, see for instance \cite[Theorem 1.24]{MR3616731}. Combining \eqref{eq:entropy-pf-1}, \eqref{eq:entropy-pf-2} and \eqref{eq:Dirichlet-2} with the choice $v= u(1-\eta \sqrt{\delta})$ for $\eta \in [0,1]$, and subsequently feeding the bounds \eqref{eq:cap-2}-\eqref{eq:cap-3} in combination with \eqref{eq:cap-1}, one finds that 
\begin{multline*}
H(\widetilde{\mathbb{P}}_{f} \vert \P) \leq \frac{1+C \delta}{1-\eta \sqrt{\delta}} \, \frac{\Cr{C:entropy} |x|}{ \log|x|}\, \bigg[ \Big(\sqrt{u_{*}+\varepsilon}-\sqrt{u(1-\eta \sqrt{\delta})}\Big)^2 \Big(1 + \frac{C}{\log |x|}\Big)  \\
+C \delta^{-1}\Big(\sqrt{u_{*}+\varepsilon}-\sqrt{(1-\eta \sqrt{\delta})(u_{*}-\varepsilon)}\Big)^2 \bigg],
\end{multline*}
for all $u, \varepsilon > 0$, $\eta \in [0,c)$ and $|x| \geq C(\delta)$. Multiplying by $\frac{\log|x|}{|x|}$ on both sides, the desired bound \eqref{eq:entropy-1} follows upon taking the successive limits $|x| \to \infty$, $\varepsilon \downarrow 0$ and $\delta \downarrow 0$, using for the term in the second line that $|1- \sqrt{1-s}| \leq C s$ for $0<s< c$ with $s=\eta \sqrt{\delta}$, which upon squaring precludes the explosion of the factor $\delta^{-1}$ in the limit $\delta \downarrow 0$.
\end{proof}

  \section{Coupling tilted interlacements}
    \label{sec:coupling}

  The remainder of this article deals with the proof of Proposition~\ref{P:typical}, which concerns the effect of the tilted measure $\widetilde{\mathbb{P}}_{f}$ introduced in Section~\ref{sec:tilt}. Towards this goal, we first show that the tilted interlacements can be controlled locally in terms of regular interlacements but with a modified (and spatially inhomogenous) intensity close to $u_*$. Herein enter the specifics in our choice of tilt $f$ from Section~\ref{sec:tilt}.
  
The control is stated in terms of a coupling between the corresponding occupation time 
fields, which appears in Proposition~\ref{P:coupling}. This is the main result of this section. 
From Proposition~\ref{P:coupling} we first deduce Proposition~\ref{P:typical}. The proof of 
Proposition~\ref{P:coupling} then occupies the remainder of Section~\ref{sec:coupling}, and 
relies on two key intermediate results, Lemmas~\ref{L:tilt-compa} and~\ref{L:N-conc}. The 
former compares certain entrance laws for tilted vs.~untilted walk, while the latter exhibits 
concentration of certain associated excursion counts. Both of these results hinge on fine 
properties of the tilted random walk measure $\widetilde{P}_x^f$. Their proofs are given 
separately in Section~\ref{sec:tilt-hm}.
  
  Recall that, for a realization $\omega =\sum_i \delta_{(w_i^*, u_i)} \in \Omega$ of the Poisson process (declared under either $\mathbb{P}$ or $\widetilde{\mathbb{P}}_{f}$, cf.~\eqref{eq:PtildeAllLvlsdefn}), one introduces the occupation time at level $u>0$ and $x \in \Z^d$ as 
 \begin{equation}
 \label{eq:occ-time}
 \ell_x^u= \ell_x^u(\omega)= \sum_{i : u_i \leq u} \int_{-\infty}^\infty 1\{ w_i^*(t)=x\} \, \textrm{d} t.
 \end{equation}
 To distinguish between the two possible reference measures, we will henceforth write 
 $\tilde{\ell}_{\cdot}^u$ to refer to a random field having the same law as ${\ell}_{\cdot}^u$ 
 under $\widetilde{\mathbb{P}}_{f}$. We consider boxes 
 \begin{equation}\label{eq:B_i}
 B_i^z= B\big(z,r_0^{({i+2})/{8}}\big), \text{ for $z \in \mathbb{Z}^d$ and $i\in \{1,\dots,4\}$,}
 \end{equation} 
 where $r_1$ is given by \eqref{eq:LB-final-1}. We will almost exclusively consider centers $z \in \Gamma=\Gamma_{\text{int}} \cup \Gamma_{\text{ext}} $, where 
 $\Gamma_{\text{int}}= T(x)$ and $\Gamma_{\text{ext}}  = \partial T^4$, with $T(x)$ and $T^4$ as in \eqref{eq:T_x} and \eqref{eq:T_i}. Note that, with the choices of 
 radii in \eqref{eq:B_i}, whenever $|x| \ge C(\delta) $ we have that $B_i^z \subset B_4^z \subset T^1$ for all $z \in \Gamma_{\text{int}}$, and $B_i^z\subset (T^5 
 \setminus T^3)$ for all $z \in \Gamma_{\text{ext}}$. The following proposition essentially asserts that the effect of tilting is to make the law of the (tilted) occupation 
 times at level $u$ look  slightly super-critical in the region $T^1$, i.e.~roughly like untilted interlacement occupation times at level $u_*- O(\varepsilon)$, and slightly 
 sub-critical in the region $T^5 \setminus T^3$. 


 \begin{prop}
 \label{P:coupling} 
  For all $\delta \in (0,1)$, 
  $\varepsilon \in (0,\frac{u_*}{10} \wedge 1)$ and $ \eta \in [0, \Cl[c]{c:eta-coup})$, 
 the following holds.
  \begin{itemize}
 \item[i)]  If $z \in \Gamma_{\textnormal{ext}} $, then with $f = f^{v,u; \varepsilon}$, 
 $v=u(1-\eta \sqrt{\delta})$ and abbreviating $B=B_1^z$, there exists a 
 coupling $Q_B$ of $(\ell^{u_*+\frac{1}{2}\varepsilon}_x)_{x\in B}$, $(\ell^{u_*+\frac{3}{2}\varepsilon}_x)_{x\in B}$ (each under $\P$) and $(\tilde\ell^{v}_x)_{x\in B}$ 
 (having the same law as $(\ell^v_x)_{x\in B}$ under $\tilde{\P}_f$ by above convention), 
  such that for $\tilde c=\tilde c(u,\delta,\varepsilon,\eta)$,
 \begin{equation}
 \label{eq:coupling-0}
 Q_B\big[  \ell^{u_*+\frac{1}{2}\varepsilon}_x \leq \tilde\ell^v_x \leq \ell^{u_*+\frac{3}{2}\varepsilon}_x , \, x \in B \big] \geq 1 -e^{-\tilde cr_0^{\tilde c}}.
 \end{equation}

  \item[ii)]  If $z \in \Gamma_{\text{int}} $, with $s_i(t)= t(1- \gamma_i)^{\sigma_i}$ for 
  $\gamma_i \in [0,1)$, $\sigma_i \in \{\pm1\}$, $i=1,2$, then one has for $f$ as above that
there exists a coupling $Q_B$ of 
 $(\ell^{s_i(u_*-\frac{2k-1}{2}\varepsilon)}_x)_{x\in B,\, k,i \in\{1,2\}}$ and $(\tilde\ell^{s_i(u)}_x)_{x\in B, i=1,2}$, such that, for suitable $\tilde c=\tilde c(u,\delta,\varepsilon,\eta,\gamma_i,\sigma_i)$,
 \begin{equation}
 \label{eq:coupling-1}
 Q_B\big[
  \ell^{s_i(u_*-\frac{3}{2}\varepsilon)}_x \leq \tilde\ell^{s_i(u)}_x \leq \ell^{s_i(u_*-\frac{1}{2}\varepsilon)}_x  , \, x \in B, \, i=1,2\big] \geq 1 -e^{-\tilde{c} r_0^{\tilde{c}}}, 
 \end{equation}
 \end{itemize} 
 \end{prop}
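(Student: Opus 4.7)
The plan is to exploit the Poisson point process representation \eqref{eq:mu_K} in order to couple the trajectories of tilted interlacements entering $K := B_4^z$ with those of untilted interlacements at suitable reference levels. Since $B := B_1^z \subset K$ and the occupation times on $B$ are a measurable functional of $\mu_K$, any such coupling of the point measures automatically induces a coupling of the occupation fields, which is what we need.

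The key geometric input is that, by \eqref{eq:T_i}, \eqref{eq:LB-final-1} and \eqref{eq:B_i}, for $|x| \geq C(\delta)$ the box $K$ (and in fact a sizable neighborhood of it) lies inside a plateau of $f$: if $z \in \Gamma_{\textnormal{ext}}$ then $f \equiv c := \sqrt{(u_*+\varepsilon)/v}$ on a neighborhood of $K$, while if $z \in \Gamma_{\textnormal{int}}$ then $f \equiv c := \sqrt{(u_*-\varepsilon)/u}$ there. Hence $V_f = -\Delta f / f \equiv 0$ on this plateau, so by \eqref{eq:tiltedPathDef} the tilted walk $\widetilde{P}_y^f$ coincides with the untilted $P_y$ on any excursion confined to the plateau, and the tilted equilibrium measure \eqref{eq:eq-tilted} simplifies on $K$ to $\tilde{e}_K(y) = c^2\, \widetilde{P}_y^f[\widetilde{H}_K = \infty]$. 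Multiplying by the label intensity $v$ (resp.~$u$), the rate of tilted trajectories hitting $K$ is therefore $v c^2 = u_* + \varepsilon$ (resp.~$u c^2 = u_* - \varepsilon$), which sits in the middle of the target window $[u_* + \tfrac{1}{2}\varepsilon, u_* + \tfrac{3}{2}\varepsilon]$ (resp.~$[u_* - \tfrac{3}{2}\varepsilon, u_* - \tfrac{1}{2}\varepsilon]$).

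I would then build the coupling in two stages. Lemma~\ref{L:tilt-compa} delivers a uniform pointwise comparison of the form $\widetilde{P}_y^f[\widetilde{H}_K = \infty] = (1 + o(1))\, P_y[\widetilde{H}_K = \infty]$ as $|x| \to \infty$, which upgrades the above heuristic to a quantitative sandwich of the tilted intensity strictly inside the target window for all sufficiently large $|x|$. Using \eqref{eq:mu_K}, one realises $\mu_K$ under $\widetilde{\mathbb{P}}_f$ and the corresponding untilted PPPs at the two reference levels on a common probability space: first, match the Poisson trajectory counts via Lemma~\ref{L:N-conc} (which forces the tilted count to be sandwiched between the two untilted counts up to an event of probability $e^{-\tilde{c} r_0^{\tilde{c}}}$); second, match the trajectories themselves, by coupling their entrance points (via the closeness of $\tilde{e}_K/\tilde{e}_K(K)$ to $e_K/e_K(K)$) and then their forward evolutions (using that within the plateau, the tilted and untilted laws agree). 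Trajectories whose forward evolution escapes the plateau before returning to $B$ contribute to the occupation field on $B$ only with negligible probability, absorbable in the stated error.

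The main obstacle is the inner case~ii), which requires a \textit{simultaneous} sandwich across two label windows $[0, s_i(u)]$, $i=1,2$: here the product structure of $\omega$ as a PPP on $W^* \times (0,\infty)$ is decisive, as it lets one first couple at the common sublevel $s_1(u) \wedge s_2(u)$ and then independently couple the increments in the disjoint label intervals running up to $s_i(u)$. A secondary but nontrivial point is verifying that the boxes $B_i^z$ keep a macroscopic distance (of order $|x|^\delta$) from the transition regions of $f$ near $\partial T^2$, $\partial T^3$, $\partial T^6$ and $\partial U$; this is ensured by the choice of radii in \eqref{eq:B_i} relative to the scales in \eqref{eq:T_i} and \eqref{eq:LB-final-1}, and guarantees that ``escape from plateau'' events carry probability $o(1)$ as required.
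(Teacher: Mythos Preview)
Your intuition about the plateaus is exactly right and drives the argument, but the coupling you sketch operates at the wrong granularity and would not produce the \emph{inequalities} in \eqref{eq:coupling-0}--\eqref{eq:coupling-1}. You propose to work at the level of full trajectories entering $K=B_4^z$: match counts, then match entrance points, then match forward evolutions. But the forward trajectory laws under $P$ and $\widetilde{P}^f$ are \emph{not} equal (they differ once the walk leaves the plateau), so you cannot match trajectories pointwise; and merely showing that counts are sandwiched and entrance laws are close in total variation does not yield $\ell^{u_*+\varepsilon/2}_x \leq \tilde\ell^v_x$ pointwise in $x$. What is needed is a \emph{set inclusion} of the contributions to occupation times, and your proposal does not supply a mechanism for that.

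The paper resolves this by descending one level further, to excursions between $B_1^z$ and $B_2^z$ (both inside the plateau). Each individual excursion has \emph{exactly} the same law under $P$ and $\widetilde{P}^f$ (since $V_f\equiv 0$ there), so the only discrepancy lies in the \emph{transition kernels} $\pi,\tilde\pi$ of the excursion Markov chains, which encode what the walk does between excursions (possibly wandering outside the plateau). Lemma~\ref{L:tilt-compa} in the paper compares $\tilde\pi/\pi$ and $\tilde\pi_0/\pi_0$ (not escape probabilities from $B_4^z$ as you write), and this feeds into the \emph{soft local time} method of \cite{PopTeix,comets2013large} to produce an honest inclusion of excursion sets, see \eqref{eq:exc3}. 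Lemma~\ref{L:N-conc} then sandwiches the excursion counts $\widetilde{\mathcal N}^v$, $\mathcal N^{u_*\pm\cdot}$ (again: excursion counts, not trajectory counts). Combining the inclusion \eqref{eq:exc3} with the count sandwich yields $\xi^{u_*+\varepsilon/2}\leq \tilde\xi^v \leq \xi^{u_*+3\varepsilon/2}$ as point measures on excursion space, and monotonicity of \eqref{eq:coup-42} then gives the occupation time inequalities for free. The missing idea in your outline is precisely this soft-local-time step; without it, ``match the trajectories themselves'' remains a wish rather than a construction. Your handling of part~ii) via independent label increments is fine in spirit, but in the paper's setup it is subsumed by applying the same excursion-level machinery with the parameters $s_i$ and invoking \eqref{eq:N-conc2}.
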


  With the aid of Proposition~\ref{P:coupling}, we first give the proof of Proposition~\ref{P:typical}.
  
  \begin{proof}[Proof of Proposition~\ref{P:typical}]
  We will freely (and tacitly) assume that various statements hold for $|x| \ge C(\delta)$, which is no loss of generality in view of \eqref{eq:typical-1} since the latter concerns the 
  limit $|x|\to \infty$ only. Let $f = f^{v,u; \varepsilon}$ with $v=u(1-\eta 
  \sqrt{\delta})$ and $\eta \in [0,\Cr{c:eta-coup})$ as for the conclusions of Proposition~\ref{P:coupling} to hold. The following
 conclusions will always hold uniformly in $\eta$ as above, $\delta \in (0,1)$ and $\varepsilon \in (0,u_* (1\wedge \frac{ \eta \sqrt{\delta}}{10}))$, as postulated in the statement of Proposition~\ref{P:typical}. Using Proposition~\ref{P:coupling} we argue separately that
  \begin{align}
& \lim_{|x| \to \infty} \widetilde{\P}_f[\lr{}{\mathcal V^{v}}{T^4}{\partial T^5}]=0, \label{eq:typical-20}\\
 & \lim_{|x| \to \infty} \widetilde{\P}_f[G^u, \lr{}{\mathcal V^u \cap T^1}{B(0,r_1)}{B(x,r_1)}]=1.\label{eq:typical-21}
  \end{align}
  Recalling $A^{u}(x, \delta)$ from \eqref{eq:A-final}, the claim \eqref{eq:typical-1} 
  immediately follows from \eqref{eq:typical-20} and \eqref{eq:typical-21}. To obtain 
  \eqref{eq:typical-20} applying the relevant coupling from 
  Proposition~\ref{P:coupling},\textit{i)}, one finds that for all $z \in \Gamma_{\text{int}} $, 
  with $B=B_1^z$,
  \begin{multline*}
  \widetilde{\P}_f[\lr{}{\mathcal V^{v}}{z}{\partial B} ]=Q_B [\lr{}{\{\tilde{\ell}_\cdot^v =0 \}}{z}{\partial B} ] \stackrel{\eqref{eq:coupling-0}}{\leq} Q_B [\lr{}{\{\ell^{u_*+\frac{1}{2}\varepsilon}_{\cdot} =0 \}}{z}{\partial B} ] + e^{-c|x|^{c'(\delta)}} \\=  \P\big[\lr{}{\mathcal V^{u_*+\frac{1}{2}\varepsilon}}{z}{\partial B} \big] + e^{-cr_0^{\tilde{c}}} \leq Ce^{-c|x|^{c'(\delta)}},
  \end{multline*}
  where the last step follows using \cite[Theorem~1.2,i)]{RI-I} and recalling that $r_0 \geq c|x|^{\delta}$. Applying a union bound over $z \in \partial T^4 (=\Gamma_{\text{ext}})$ and using the bound on $ \widetilde{\P}_f[\lr{}{\mathcal V^{v}}{z}{\partial B} ]$ just obtained yields \eqref{eq:typical-20}. To deal with \eqref{eq:typical-21}, we use part \textit{ii)} of Proposition~\ref{P:coupling}. We consider the two events appearing in \eqref{eq:typical-21}
 separately, and start with the connection event appearing there. Throughout the rest of the proof, constants may implicitly depend on all of $u,\delta,\varepsilon,\eta$.
 
 Let $A_z= B(z,r_1)\setminus B(z,r_1/2)$ and $\text{LU}_z(\mathcal{V}, \mathcal{V}')$ denote the 
 event that for any $z'$ with $|z-z'| \leq r_1/4$, any two connected crossing components of both $\mathcal{V} \cap A_z$ and $\mathcal{V} \cap A_{z'}$ are connected in 
 $\mathcal{V}' \cap (B(z,2r_1) \setminus B(z,r_1/2))$; cf.~\cite[(3.11)-(3.12)]{MR3602841} for similar events. One checks that $\text{LU}_z(\mathcal{V}, \mathcal{V}')$ is decreasing in $\mathcal{V}$ and increasing in $\mathcal{V}'$. For any $v \geq u$, the joint occurrence as $z$ varies over $\Gamma_{\text{int}}=T(x)$ of i) the event $\text{LU}_z(\mathcal{V}^v, 
 \mathcal{V}^u)$, and ii) the event that 
 $A_z \cap \mathcal{V}^v$ has at least one crossing component, is seen to imply $E$. Let $v 
 = \frac{u}{1- \gamma}$, with $\gamma$ chosen small enough so that $w < u_* - \frac14 
 \varepsilon$, where $w= \frac{ u_*-\frac{1}{2}\varepsilon}{1- \gamma}$. Applying 
 Proposition~\ref{P:coupling},\textit{ii)} with $\gamma_1 =0$ and $\gamma_2=\gamma$, 
 $\sigma_2=-1$, one finds that for any $z \in \Gamma_{\text{int}}$,  \begin{equation}
 \widetilde{\P}_f[\text{LU}_z(\mathcal{V}^v, \mathcal{V}^u)] \geq \P[\text{LU}_z(\mathcal{V}^{w}, \mathcal{V}^{u_*-\frac32\varepsilon})]  -e^{-\tilde{c} r_0^{\tilde{c}}}. \label{eq:typical-22}
 \end{equation}
  But as a straightforward consequence of \cite[Theorem~1.2,ii)]{RI-I}, which applies because $w< u_*$, one obtains that the right-hand side of \eqref{eq:typical-22} exceeds $1- Ce^{-\tilde{c} r_0^{\tilde{c}}}$. One deals with the events in ii) above, which are increasing in $\mathcal{V}^v$, in a similar (but simpler) fashion. All in all, together with a union bound, this implies \eqref{eq:typical-21} in absence of the event $G^u$, i.e.~$\widetilde{\P}_f[E] \to 1$ as $|x|\to \infty$, where $E= \{\lr{}{}{B(0,r_1)}{B(x,r_1)} \text{ in } \mathcal V^u \cap T^1\}$. To accommodate the presence of $G^u$ in \eqref{eq:typical-21}, recalling its form specified above Theorem~\ref{T:LB-gen} and applying a union bound, one sees that it is enough to argue that 
  \begin{equation}
   \lim_{|x| \to \infty} \,  |I| \cdot  \sup_{i}  \widetilde{\P}_f[(G_{B_i})^c]=0.
  \label{eq:typical-23}
 \end{equation}
  This is obtained by combining \eqref{eq:LB-final-2} and Proposition~\ref{P:coupling},\textit{ii)}. Indeed, recall to this effect (cf.~above Theorem~\ref{T:LB-gen}) that $B_i$
 is a box of radius $10r_1$ with center $z \in \Gamma_{\text{int}}$. By choice of $r_1$ in \eqref{eq:LB-final-1} and in view of \eqref{eq:B_i}, this means that $B_i \subset B_1^z=B$. In particular, the event $G_{B_i}$ is thus measurable relative to $(\tilde\ell^{s_i(u)}_x)_{x\in B, i=1,2}$, where $s_i(u)= u(1- \gamma_i)$ for suitable choice of $\gamma_i \in \{ \frac k4\eta \sqrt{\delta}: k=0,1,2,3,4\}$. Assume for concreteness that $s_1(u)> s_2(u)$, so that $G_{B_i}$ is increasing in $(\tilde\ell^{s_1(u)}_x)_{x\in B}$ and decreasing in $(\tilde\ell^{s_2(u)}_x)_{x\in B}$. With $(v,w)=(s_1(u), s_2(u))$, $(v',w')= (s_1(u_* -\frac32\varepsilon), s_2(u_* -\frac12\varepsilon))$, it then follows by application of \eqref{eq:coupling-1}, using the (separate) monotonicity of $G_{B_i}= G_{B_i}^{v,w}$, that 
 $$\widetilde{\P}_f[G_{B_i}^{v,w}]\geq \P[G_{B_i}^{v',w'}] -e^{-\tilde{c} r_0^{\tilde{c}}}.$$
 The claim \eqref{eq:typical-23} now follows using \eqref{eq:LB-final-2}, noting in particular that the assumption $\varepsilon < u_* \frac{ \eta \sqrt{\delta}}{10}$ guarantees that $v'>w'$.
  \end{proof}
  
It thus remains to prove Proposition~\ref{P:coupling}. We start with some preparation. For $z \in \Gamma$, with $B= B_1^z$ and $U= B_2^z$ (recall \eqref{eq:B_i}) we 
will consider both for tilted and untilted walks successive excursions between $B$ and $U^c$. In light of this, we introduce a reference Poisson point process $\eta_B = 
\sum_{n \geq 0} \delta_{(\zeta_n, u_n)}$ governed by the probability measure ${Q}_B$ on the state space $\Xi \times \mathbb{R}_{+}$, where $\Xi=\Xi_{B,U}$ denotes 
the space of relevant excursions, i.e.~of finite length nearest neighbour trajectories starting on $\partial B$, with range contained in $U$ up to their terminal point, a 
vertex in $U^c$. 
The intensity measure of $\eta$ is $\nu_{B} \otimes \lambda$, where $\lambda$ denotes the Lebesgue measure and
\begin{align}\nu_{B}(\cdot) \stackrel{\text{def.}}{=} \sum_{x \in \partial B} P_x\big[(X_t)_{0 \leq t \leq T_{U}} \in \cdot \big].
 \label{eq:muB1intro}
\end{align}
 Importantly for what is to follow, if $f=f^{v,u; \varepsilon}$ for any $0<  v \leq u$ and $\varepsilon \in (0,\frac{u_*}{10} \wedge 1)$, then regardless of the choice 
 of $z \in \Gamma$, by \eqref{eq:chosenf-2} the function $f$ is constant in the one-neighborhood of $U$, which in turn implies that $V_f(x)=0$ for all $x \in U$. 
 In view of \eqref{eq:tiltedPathDef}, this means that $P_x$ in \eqref{eq:muB1intro} can be freely replaced by $\widetilde{P}_x^f$, i.e.~excursions between $B$ and 
 $U^c$ do not witness the tilt.
  
We now proceed to define two Markov chains $Z=(Z_n)_{n \ge 1}$ and $\widetilde{Z}=(\widetilde{Z}_n)_{n \geq1}$ on $\Xi$, as follows. To this effect, we introduce the entrance distribution and potential of $B$, for $x,y \in \Z^d$, as
\begin{equation}\label{eq:h_B}
h_B(x,y)= P_x\big[H_B < \infty, X_{H_B}=y\big], \quad h_B(x)= \sum_y h_B(x,y). 
\end{equation}
The corresponding tilted quantities $\tilde{h}_B(x,y)$ and $\tilde{h}_B(x)$ are obtained by replacing $P_x$ by the tilted measure $\widetilde{P}_x \equiv \widetilde{P}_x^f$. Both $Z$ and $\widetilde{Z}$ are specified in terms of their transition densities $\pi$ and $\tilde{\pi}$ relative to $\nu_{B}$ in \eqref{eq:muB1intro},~that is,
\begin{equation}\label{eq:exc1}
{Z}_1 \stackrel{\text{law}}{=} {\pi}_0(\zeta)\nu_{B}(d\zeta), \quad P[{Z}_{k+1} \in d\zeta | {Z}_1,\dots, {Z}_k ] = \pi({Z}_k, \zeta) \nu_{B}(d\zeta), \text{ for } k \geq 1
 \end{equation}
  and similarly for $\widetilde{Z}$ with $\tilde{\pi}$ in place of ${\pi}$, where, for $\zeta=(\zeta_0,\dots,\zeta_n) \in \Xi$ and with $\bar{e}_B=e_B/\text{cap}(B)$ the normalized equilibrium measure,
  \begin{equation}\label{eq:exc2}
  \pi_0(\zeta) \stackrel{\text{def.}}{=} \bar{e}_{B}(\zeta_0), \quad \pi (\zeta,\zeta')\stackrel{\text{def.}}{=} h_{B}(\zeta_n, \zeta_0') + \bar{e}_{B}(\zeta_0') (1-h_B)(\zeta_n).
  \end{equation}
  The corresponding tilted transition densities are declared by replacing $\bar{e}_{B}$ and $h_{B}$ by their tilted analogues. One readily sees from \eqref{eq:exc1} and \eqref{eq:exc2} in combination with the observation made below \eqref{eq:muB1intro} that $Z$, resp.~$\widetilde{Z}$ has the same law as the excursions from $B$ to $U^c$ induced by the interlacement process $\omega$ under $\P$,~resp.~${\widetilde{\P}_f}$, when ordering them according to increasing label $u$ and in order of appearance within a given random walk trajectory in the support of the point measure $\omega$. The key behind the coupling(s) postulated in Proposition~\ref{P:coupling} is encapsulated in the following comparison of transition densities. 
  
Before we state it concisely, observe that, in light of the statement of Proposition~\ref{P:coupling}, when working with $f \in \mathcal{F}$ we only ever have to deal with 
boxes $B=B_1^z$ having centers $z \in \Gamma$. Also note that, albeit implicit in our notation, the set of centers $\Gamma = \Gamma(x)$ depends on $x \in 
\mathbb{Z}^d$ via the oblique cylinders introduced in \eqref{eq:T_i}. These cylinders, as well as the boxes $B_i^z$, depend on one further parameter $\delta \in 
(0,\frac16)$.

  \begin{lemma} \label{L:tilt-compa} For all $f$ as in \eqref{eq:f1hatdef}, $x \in \mathbb{Z}^d$, $z \in \Gamma(x)$, with $\Xi=\Xi_{B_1^z,B_2^z}$,
  \begin{equation}
  \label{eq:tilt-compa1}
  \sup_{\zeta,\zeta' \in \Xi} \bigg\{ \bigg| \frac{\tilde{\pi}_0(\zeta)}{{\pi_0}(\zeta)} -1 \bigg| , \,   \bigg| \frac{\tilde{\pi}(\zeta',\zeta)}{{\pi}(\zeta',\zeta)} -1 \bigg| \bigg\} \leq Cr_0^{-c}. 
  \end{equation}
  \end{lemma}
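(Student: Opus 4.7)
The plan is to reduce \eqref{eq:tilt-compa1} to pointwise comparisons of the normalized equilibrium measures $\tilde{\bar{e}}_B, \bar{e}_B$ on $\partial B$ and of the entrance laws $\tilde{h}_B(x,y), h_B(x,y)$ for $x \in \partial^{\textnormal{out}} U$, $y \in \partial B$, and then to exploit that the profile function $f$ is locally constant near $B=B_1^z$ and $U=B_2^z$. Concretely, the inclusions $B_4^z \subset T^2$ for $z \in \Gamma_{\textnormal{int}}$ and $B_4^z \subset T^5 \setminus T^3$ for $z \in \Gamma_{\textnormal{ext}}$ noted below \eqref{eq:B_i}, combined with \eqref{eq:chosenf-2}, imply that $f \equiv c_z$ throughout $B_4^z \supset B_2^z \cup \partial^{\textnormal{out}} B_2^z$ for some constant $c_z>0$. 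In particular $V_f = -\Delta f/f$ vanishes on $B_2^z$, so by \eqref{eq:tiltedPathDef} the Doob transform is trivial along any trajectory that does not leave $B_2^z$; this is the central input. Since moreover $h_B(\zeta_n) \leq C r_1^{-c}$ for $\zeta_n \in \partial^{\textnormal{out}} U$ by the hitting bound recalled below, one has $\pi(\zeta,\zeta') \geq \tfrac12 (h_B(\zeta_n,\zeta'_0) + \bar{e}_B(\zeta'_0))$ for $r_1$ large, so that pointwise bounds $\tilde{h}_B/h_B, \tilde{\bar{e}}_B/\bar{e}_B = 1+O(r_0^{-c})$ indeed imply \eqref{eq:tilt-compa1}.

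The key step is a strong Markov decomposition based on the first exit from $U = B_2^z$. Applying the Feynman--Kac martingale $f(X_t)\exp(\int_0^t V_f(X_s)\,ds)$ (cf.\ \eqref{eq:tiltedPathDef}) at the stopping time $T_U$ and using that $V_f \equiv 0$ and $f \equiv c_z$ on the visited set, one obtains for $y \in \partial B$, $w \in \partial^{\textnormal{out}} U$ the identity
\begin{equation*}
\widetilde{P}_y^f[\widetilde{H}_B > T_U,\, X_{T_U}=w] = P_y[\widetilde{H}_B > T_U,\, X_{T_U}=w],
\end{equation*}
with an analogous identity for paths from $\partial^{\textnormal{out}} U$ to $\partial B$ stopped at $H_B \wedge T_U$. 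Combined with the simplification $\tilde{e}_B(y) = c_z^2 \widetilde{P}_y^f[\widetilde{H}_B = \infty]$ for $y \in \partial B$ (cf.\ \eqref{eq:eq-tilted}) and the strong Markov decomposition of $\{\widetilde{H}_B = \infty\}$ at time $T_U$, the lemma reduces to the uniform hitting bound
\begin{equation*}
\sup_{w \in \partial^{\textnormal{out}} U} \big\{\widetilde{P}_w^f[H_B < \infty] \vee P_w[H_B < \infty]\big\} \leq C r_0^{-c}.
\end{equation*}
Indeed, once this holds, both $\widetilde{P}_y^f[\widetilde{H}_B = \infty]$ and $P_y[\widetilde{H}_B = \infty]$ equal $(1 + O(r_0^{-c})) \sum_w P_y[\widetilde{H}_B > T_U,\, X_{T_U} = w]$; the common factor $c_z^2$ then cancels between $\tilde{e}_B/e_B$ and $\textnormal{cap}(B)/\widetilde{\textnormal{cap}}(B)$, yielding $\tilde{\bar{e}}_B/\bar{e}_B - 1 = O(r_0^{-c})$ and hence the bound on $\tilde{\pi}_0/\pi_0$. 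A parallel treatment of $\tilde{h}_B/h_B$ then controls $\tilde{\pi}/\pi$.

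For the untilted walk the hitting bound follows routinely from the last-exit formula \eqref{eq:last-exit}: since $|w - z| \geq c r_1^{1/2}$ and $\textnormal{cap}(B) \leq C r_1^{3(d-2)/8}$, one obtains $P_w[H_B < \infty] \leq C r_1^{-(d-2)/8} = C r_0^{-c}$. The tilted counterpart is the main obstacle. Using the same $f$-martingale at the stopping time $H_B$ together with $f(w) = f(X_{H_B}) = c_z$, one arrives at the Feynman--Kac representation
\begin{equation*}
\widetilde{P}_w^f[H_B < \infty] = E_w\Big[e^{\int_0^{H_B} V_f(X_s)\,ds}\, 1\{H_B < \infty\}\Big],
\end{equation*}
and must control the exponential factor. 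Since $V_f$ is uniformly bounded and supported in thin neighborhoods of $\partial T^2 \cup \partial T^3 \cup \partial T^6 \cup \partial U$ (where $f$ fails to be harmonic, cf.\ \eqref{eq:f1hatdef}), this amounts to showing that the walk from $w$ spends only polynomially little time in these shells before either hitting $B$ or escaping to infinity. The quantitative tool is the oblique hitting estimate of Proposition~\ref{L:hit} together with the capacity bounds of Lemma~\ref{L:cap}, and the argument is formalised in Section~\ref{sec:tilt-hm}; this is also the step where the smallness constraints on $\eta$ and $\varepsilon$ enter, through the a priori bound on $|V_f|$ read off from \eqref{eq:f1hatdef}.
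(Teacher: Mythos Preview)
Your reduction is correct and matches the paper's overall scheme: use that $f$ is constant on $B_4^z$ so that tilted and untilted walks agree until exiting $B_4^z$, and reduce \eqref{eq:tilt-compa1} to the single estimate $\sup_{w\in\partial^{\textnormal{out}}B_2^z}\widetilde P_w^f[H_{B_1^z}<\infty]\le Cr_0^{-c}$. Your treatment of $\tilde{\bar e}_B/\bar e_B$ via the cancellation of the factor $c_z^2=f^2(z)$ is exactly what the paper does in Proposition~\ref{P:harm-comp}, and the handling of $\tilde\pi/\pi$ via the lower bound $\pi\ge\tfrac12(h_B+\bar e_B)$ is equivalent to the paper's comparison of both $\tilde\pi$ and $\pi$ to $\bar e_{B_1}$ in \eqref{eq:pi_final}.

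The gap is in your final step. Your Feynman--Kac representation $\widetilde P_w^f[H_B<\infty]=E_w[e^{\int_0^{H_B}V_f(X_s)\,ds}1\{H_B<\infty\}]$ is correct, but controlling the exponential requires a handle on the \emph{positive} part $V_f^+$, which is supported on $\partial^{\textnormal{out}}T^3\cup\partial T^6$ and can accumulate unboundedly along trajectories that linger there before returning to $B$. Neither Proposition~\ref{L:hit} nor Lemma~\ref{L:cap} gives this; those results concern capacities and escape probabilities for the oblique tubes and are used only in the entropy estimate of Section~\ref{sec:entropy}, not here. The paper avoids the $V_f^+$ problem entirely by a different route (Lemma~\ref{lem:tilted_beta_bound}): it proves only the $L^1$ bound $E_y[\int_0^\infty V_f^-(X_s)\,ds]\le C$ via the explicit Green's function computation \eqref{eq:intVboundproof}--\eqref{eq:VUNapprox}, uses Markov's inequality to find a high-probability event $I_f(\alpha)$ on which the Radon--Nikodym derivative is bounded \emph{below}, and thereby transfers the untilted escape probability $P_y[H_{B_3}=\infty]\ge c$ to the tilted measure as in \eqref{eq:beta-2}. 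A geometric iteration \eqref{eq:beta-4} then upgrades this $O(1)$ escape probability to the polynomial hitting bound you need. The point is that a lower bound on $d\widetilde P^f/dP$ requires only $V_f^-$, whereas your direct upper bound on the tilted hitting probability would require $V_f^+$, which is genuinely harder and not addressed by the tools you invoke.
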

  The proof of Lemma~\ref{L:tilt-compa} requires some work and is postponed to the next section. The main issue is that between successive excursions, the walk under $\widetilde{P}_x$, $x \in B$, may in principle travel far (with polynomial probability in $r_0$), thereby exploring regions in which the effect of the tilt is severely felt. An additional source of difficulty stems from the fact that \eqref{eq:tilt-compa1} requires a (stronger) control of ratios rather than of mere differences $|\tilde{\pi} -\pi|$.
 
 Assuming Lemma~\ref{L:tilt-compa} to hold, the method of soft local time \cite{PopTeix}, see also~\cite{comets2013large} which will be sufficient for our purposes, allows to define under $Q_B$ an event $U_n^{v}$ for each integer $n \geq 1$ and $v \in (0,c)$ such that, whenever $|x| \geq C(v)$ (so that the bound in \eqref{eq:tilt-compa1} is smaller than $\frac{v}{3}$), one has
 \begin{equation}
 \label{eq:exc3}
 \begin{split}
 &Q_B[U_n^{v}] \geq 1 - C \exp(-c vn), \text{ and }\\
 & \text{on } U_n^{v}, \text{ for all $m \geq n$:} \begin{array}{l}
\{Z_1, \ldots, Z_{(1-v)m} \} \subset \{\widetilde{Z}_1, \ldots, \widetilde{Z}_{(1+4v)m}\} \\
\{\widetilde{Z}_1, \ldots, \widetilde{Z}_{(1-v)m} \} \subset \{Z_1, \ldots, Z_{(1+4v)m}\}.
\end{array}
 \end{split}
 \end{equation}
 To obtain \eqref{eq:exc3} one retraces the steps of \cite[Lemma 2.1]{comets2013large}, and the key assumption~\cite[(2.5)]{comets2013large} appearing in that context is replaced by \eqref{eq:tilt-compa1}. 
 
 Next, we attach to each of $Z$ and $\widetilde{Z}$ a sequence $\sigma =(\sigma_{k})_{k \geq 1}$ and $\tilde{\sigma}$ of labels in $\{0,1\}$, as follows. We $\sigma_1= \tilde{\sigma}_1=1$ and for each $k \geq 2$, the label $\sigma_k$ has conditional law given $Z_{k-1}, Z_k$ given by 
 \begin{equation}\label{eq:sigma-k}
 P[\sigma_k=0 \,| \, Z_{k-1}, Z_k] = 1- P[\sigma_k=1 \,| \, Z_{k-1}, Z_k] = \frac{h_{B}(Z_{k-1}^e, Z_k^{i})}{\pi(Z_{k-1},Z_k)},
 \end{equation}
 where $Z_{\cdot}^{i/e}$ refer to the initial/end-point of $Z_{\cdot}$. The prescription for $\tilde{\sigma}_k$ is identical to \eqref{eq:sigma-k} but using $\tilde{h}_B$ and $\tilde{\pi}$ instead.  The label $\sigma$ recovers information about the trajectories underlying the excursions forming $Z$: the label $\sigma_k=1$ signals excursions $Z_k,\dots$ stemming from a new random walk trajectory. We henceforth assume that $Q_B$ is suitably enlarged as to carry the sequences $\sigma$ and $\tilde{\sigma}$ with the correct law, independently of each other conditionally on $Z,\widetilde{Z}$. 

As a last ingredient, we assume $Q_B$ to carry additionally two independent Poisson counting processes $n_B$ and $\tilde{n}_B$ on $[0,\infty)$ with intensity $\text{cap}(B)$ and $\widetilde{\text{cap}}(B)$, respectively, and write $n_B(t)= n_B([0,t])$ for $t \geq 0$, a Poisson variable with mean $\text{cap}(B)t$. We consider the random variables
\begin{equation}
\label{eq:N}
\begin{split}
&\mathcal{N}^u = \sup \big\{ n \geq1 : \textstyle \sum_{1\leq k \leq n} \sigma_k \leq n_B(u)\big\}, \quad u >0,
\end{split}
\end{equation}
(with the convention $\sup \emptyset =0$), and similarly $\widetilde{\mathcal{N}}^u$, using $\tilde\sigma_k$ and $\tilde n_B$ instead. These random variables admit the 
following comparison estimates, which depend on the function $f$ underlying the law of $\widetilde{Z}$ determining $\widetilde{\mathcal{N}}^u$. 


\begin{lemma}\label{L:N-conc} For all $\delta \in (0,1)$, $\varepsilon \in (0,\frac{u_*}{10} \wedge 1)$ and $ \eta \in [0, \Cl[c]{c:eta-coup})$, the following hold.
\begin{itemize}
\item[i)]  If $z \in \Gamma_{\textnormal{ext}} $, then with $f=f^{v,u; \varepsilon}$, $v=u(1-\eta \sqrt{\delta})$, 
 \begin{equation}
 \label{eq:N-conc1}
 Q_B\big[ (1+ c \varepsilon)  {\mathcal{N}}^{u_*+\frac{1}{2}\varepsilon} \leq \widetilde{\mathcal{N}}^v \leq (1+c\varepsilon)^{-1} {\mathcal{N}}^{u_*+\frac{3}{2}\varepsilon}  \big] \geq 1 -e^{-\tilde cr_0^{\tilde c}}.
 \end{equation}
  \item[ii)]  If $z \in \Gamma_{\text{int}} $, then with $s_i(t)= t(1- \gamma_i)^{\sigma_i}$ for $\gamma_i \in [0,1)$, $\sigma_i \in \{\pm1\}$, $i=1,2$, one has for $f$ as above that 
   \begin{equation}
 \label{eq:N-conc2}
 Q_B\big[ (1+ \varepsilon)
  {\mathcal{N}}^{s_i(u_*-\frac{3}{2}\varepsilon)} \leq \widetilde{\mathcal{N}}^{s_i(u)} \leq (1+ \varepsilon)^{-1} {\mathcal{N}}^{s_i(u_*-\frac{1}{2}\varepsilon)}  \big] \geq 1 -e^{-\tilde{c} r_0^{\tilde{c}}}, \  i=1,2,
 \end{equation}
 for suitable $\tilde c$ depending on $u,\delta,\varepsilon,\eta,\gamma_i,\sigma_i$.
 \end{itemize} 
\end{lemma}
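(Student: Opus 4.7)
The plan is to establish exponential concentration of both $\mathcal{N}^u$ and $\widetilde{\mathcal{N}}^v$ about their means, and then to derive a precise comparison between these means via a detailed analysis of tilted excursion statistics. The random variables $\mathcal{N}^u$ and $\widetilde{\mathcal{N}}^v$ are essentially defined independently under $Q_B$ (through independent copies of $\sigma_k,\tilde\sigma_k,n_B,\tilde n_B$), so the proof reduces to controlling each of them separately and then matching their means.

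For concentration, note that $\mathcal{N}^u$ has a compound-Poisson structure: $n_B(u) \sim \mathrm{Pois}(u\,\text{cap}(B))$ counts the trajectories, and the labels $\sigma_k$ of~\eqref{eq:sigma-k} partition the excursions among these trajectories. A Chernoff bound applied to $n_B(u)$ (together with $\text{cap}(B) \gtrsim r_1^{d-2}$) gives relative deviations of order $r_0^{-c}$ with $Q_B$-probability $\geq 1-e^{-\tilde c r_0^{\tilde c}}$. Conditionally on $Z$, the $\sigma_k$ are independent Bernoullis whose means $1-h_B(Z_{k-1}^e,Z_k^i)/\pi(Z_{k-1},Z_k)$ take values in a subinterval of $(0,1)$ bounded away from the endpoints uniformly in the relevant configurations (by standard Green-function asymptotics on $\partial B$ and $\partial^{\mathrm{out}}U$). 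A Bernstein-type bound for sums of conditionally independent Bernoullis then yields, after inverting~\eqref{eq:N}, that $\mathcal{N}^u = u\chi(1)(1+O(r_0^{-c}))$ on an event of $Q_B$-probability $\geq 1-e^{-\tilde c r_0^{\tilde c}}$, with $\chi(v) = \mathbb{E}[\mathcal{N}^v]/v$. The analogous argument with $\tilde\sigma_k,\tilde n_B,\widetilde Z$ gives $\widetilde{\mathcal{N}}^v = v\tilde\chi(1)(1+O(r_0^{-c}))$ with $\tilde\chi(v) = \widetilde{\mathbb{E}}_f[\widetilde{\mathcal{N}}^v]/v$.

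The heart of the proof is then the mean identity
\[
\tilde\chi(1)/\chi(1) = f_0^2\,(1+O(r_0^{-c})),
\]
where $f_0 = f|_{B_1^z}$ is the constant value of $f$ on (a neighborhood of) $B_1^z$, namely $\sqrt{(u_\ast+\varepsilon)/v}$ for $z \in \Gamma_{\mathrm{ext}}$ and $\sqrt{(u_\ast-\varepsilon)/u}$ for $z \in \Gamma_{\mathrm{int}}$, cf.~\eqref{eq:chosenf-2}. This identity rests on two ingredients. First, the occupation-time balance $\widetilde{\mathbb{E}}_f[\tilde\ell_y^v] = vf(y)^2$ for $y \in B$, which follows from~\eqref{eq:mu_K} and~\eqref{eq:tiltedPathDef} combined with the tilted equilibrium identity~\eqref{eq:greenseqidentity}; this gives total occupation time in $B$ equal to $vf_0^2|B|$ under $\widetilde{\mathbb{P}}_f$ (versus $v|B|$ under $\mathbb{P}$). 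Second, because $V_f\equiv 0$ on $U$, the dynamics of a single $B$-to-$\partial U^c$ excursion inside $U$ are exactly the same under tilted and untilted laws, so the mean time per excursion spent in $B$ is tilt-invariant modulo $O(r_0^{-c})$ corrections coming from the shifted entry distribution on $\partial B$ induced by the tilted return dynamics in $U^c$. Ratio-ing these two inputs then gives $\tilde\chi(1)/\chi(1) = f_0^2(1+O(r_0^{-c}))$. Plugging in, for $z\in\Gamma_{\mathrm{ext}}$ one has $vf_0^2 = u_\ast+\varepsilon$, which is strictly separated by an amount linear in $\varepsilon$ from both $(1+c\varepsilon)(u_\ast+\tfrac12\varepsilon)$ and $(1+c\varepsilon)^{-1}(u_\ast+\tfrac32\varepsilon)$ once $c$ is chosen small; combined with concentration this yields~\eqref{eq:N-conc1}. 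For $z \in \Gamma_{\mathrm{int}}$, the homogeneity of $s_i$ gives $s_i(u)f_0^2 = s_i(u_\ast-\varepsilon)$, and the target~\eqref{eq:N-conc2} reduces to comparing the effective levels $u_\ast-\varepsilon, u_\ast-\tfrac12\varepsilon, u_\ast-\tfrac32\varepsilon$, with the $\varepsilon$-gaps producing the required multiplicative separation (absorbed into the $\tilde c$-dependence on $(u,\delta,\varepsilon,\eta,\gamma_i,\sigma_i)$).

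The main technical obstacle is the multiplicative $O(r_0^{-c})$ control of the ratio $\tilde\chi(1)/\chi(1)$. Although $V_f\equiv 0$ on $U$ makes the tilt trivial inside, the walk's return probabilities from $\partial^{\mathrm{out}}U$ back to $B$, and hence both $\widetilde{\text{cap}}(B)$ and the excursion-per-trajectory mean, depend on the global profile of $f$, in particular on its behavior in the harmonic-humpback transition regions at distance $\gtrsim |x|^{\delta}$ from $B$. Converting this into the desired sharp comparison is precisely what Section~\ref{sec:tilt-hm} accomplishes, with Lemma~\ref{L:tilt-compa} serving as the prototypical estimate on ratios of transition densities along excursions; the oblique orientation of the cylinders $T^i$ precludes axis-aligned projection arguments of the type used in~\cite{gosrodsev2021radius}, requiring instead the robust tilted-walk analysis flagged in the introduction.
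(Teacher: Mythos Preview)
Your overall strategy---concentrate $\mathcal N^u$ and $\widetilde{\mathcal N}^v$ around their means, then compare the means---is sound and leads to the same conclusion, but the paper proceeds differently and more economically. Rather than separating concentration from mean identification, the paper exploits the exact compound Poisson representation
\[
\widetilde{\mathcal N}^v \stackrel{\text{law}}{=} \sum_{i=1}^{\widetilde\Theta(v)} \tilde\tau_i,
\]
with $\widetilde\Theta(v)$ Poisson of mean $v\,\widetilde{\textnormal{cap}}(B)$ and $\tilde\tau_i$ i.i.d.\ excursion counts per trajectory, and computes the exponential moment in one line:
\[
\log \widetilde{\mathbb E}_f\big[e^{\lambda\widetilde{\mathcal N}^v}\big]
= v\,\widetilde{\textnormal{cap}}(B)\big(\widetilde E^f_{\tilde\pi_0}[e^{\lambda\tilde\tau}]-1\big).
\]
The two inputs are then Lemma~\ref{L:tau} (giving $\widetilde E^f_{\tilde\pi_0}[e^{\lambda\tilde\tau}]\approx e^\lambda$ since $\beta^f(x)\to0$) and Proposition~\ref{P:harm-comp} (giving $v\,\widetilde{\textnormal{cap}}(B)\approx (u_\ast+\varepsilon)\,\textnormal{cap}(B)$ for $z\in\Gamma_{\textnormal{ext}}$). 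A single Chebyshev step then pins each of $\mathcal N^{u_\ast+\frac12\varepsilon}$, $\widetilde{\mathcal N}^v$, $\mathcal N^{u_\ast+\frac32\varepsilon}$ near the deterministic targets $(u_\ast+\tfrac58\varepsilon)\textnormal{cap}(B)$, $(u_\ast+\varepsilon)\textnormal{cap}(B)$, $(u_\ast+\tfrac{11}8\varepsilon)\textnormal{cap}(B)$, and the ordering follows. Your occupation-time identity $\widetilde{\mathbb E}_f[\tilde\ell_y^v]=vf(y)^2$ is a legitimate alternative to the capacity comparison in Proposition~\ref{P:harm-comp}: it effectively trades one Section~\ref{sec:tilt-hm} ingredient for another, since making ``mean time per excursion is tilt-invariant up to $O(r_0^{-c})$'' precise still requires the entry-distribution comparison underlying Lemma~\ref{L:tilt-compa}.

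A minor correction: your claim that the conditional means $1-h_B(Z_{k-1}^e,Z_k^i)/\pi(Z_{k-1},Z_k)$ are bounded away from both endpoints is wrong---by \eqref{eq:pi_final} this mean equals $1-h_B(\zeta_n)+O(\epsilon_0)$, and $h_B(\zeta_n)\asymp r_1^{-c}\to 0$ for $\zeta_n\in\partial^{\textnormal{out}}B_2^z$, so the means tend to~$1$. This does not actually obstruct a Bernstein bound, but it means your description of the concentration step is inaccurate; the compound Poisson representation (your ``inverting \eqref{eq:N}'') is really the clean way to phrase it, and once you write it down you are essentially doing what the paper does.
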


The bounds \eqref{eq:N-conc1} and \eqref{eq:N-conc2} are tailored to our purposes. Underlying them are similar controls on the behavior of the tilted walk $\widetilde{P}_x^f$ that are also needed to prove \eqref{eq:tilt-compa1} (essentially because the relevant quantities $\tilde{\pi}$ and $\tilde{h}_B$ also crucially appear in \eqref{eq:sigma-k} and thus govern the law of the variables $(\tilde\sigma_k)_{k \geq1}$ entering $\widetilde{\mathcal{N}}^u$ in \eqref{eq:N}). The proof of Lemma~\ref{L:N-conc} thus appears jointly with that of Lemma~\ref{L:tilt-compa} in the next section. With both Lemmas~\ref{L:N-conc} and \ref{L:tilt-compa} at our disposal, we are ready to give the short:

\begin{proof}[Proof of Proposition~\ref{P:coupling}]
We choose $Q_B$ the coupling constructed above. Recall from \eqref{eq:mu_K} the induced interlacement process $\mu_B = \mu_B(\omega)$  (declared under either of $\P$ and $\widetilde{P}_f$), collecting the labeled trajectories entering $B$ after their entrance time in $B$. Observe that under $Q_B$,
the random measures 
\begin{equation}\label{eq:coup-41}
\big(\xi^u, \,\tilde\xi^v\big)_{u,v>0}\stackrel{\text{def.}}{=}
\Big( \sum_{1 \leq k \leq \mathcal{N}^u} \delta_{Z_k}, \, \sum_{1 \leq k \leq \widetilde{\mathcal{N}}^v} \delta_{\widetilde{Z}_k}\Big)_{u,v>0}
\end{equation}
have the same law as the excursions between $B$ and $U^c$ induced by the trajectories in the support of $\mu_B$ with label at most $u$ and $v$ under $\P$ and $\widetilde{\P}_f$, respectively.  As can be seen from \eqref{eq:occ-time}, the occupation time field $(\ell_x^u)_{x\in B, u>0}$ is clearly a measurable function of the excursions induced by $\mu_B(\omega)$ under $\P$, and similarly for $(\tilde \ell_x^u)_{x\in B, u>0}$ under $\widetilde{\P}_f$ (recall our notational convention from below \eqref{eq:occ-time}). Hence, $\ell_x^u, \tilde \ell_x^v$, $x\in B, u,v>0$ can be viewed (in law) as functionals of the point measures in \eqref{eq:coup-41}, as
\begin{equation}
\label{eq:coup-42}
\ell_{x}^u \stackrel{\text{law}}{=} \ell_x(\xi^u) \stackrel{\text{def.}}{=}\sum_{1 \leq k \leq \mathcal{N}^u} \int_0^{\text{len}(Z_k)} 1\{Z_k(t)=x\} \mathrm{d} t, \quad x \in B, \, u>0,
\end{equation}
where $\text{len}(Z_k)$ refers to the length (duration) of the excursion $Z_k$, and similarly for $\tilde\ell_{x}^{u}$. In particular, $Q_B$ thus furnishes a coupling of the occupation time fields $(\ell_{x}^{u}, \tilde\ell_{x}^{v}: x \in B, u,v >0)$. 

 We now argue that \eqref{eq:coupling-0} holds (which implicitly entails that $z$, $f$ and $B=B_1^z$ have been chosen accordingly). For two point measures $\xi,\xi'$ on the excursion space $\Xi$, as in \eqref{eq:coup-41} for instance, we write $\xi \leq \xi'$ if $\text{supp}(\xi) \subset \text{supp}(\xi')$. As follows plainly from \eqref{eq:coup-42}, the occupation time field is clearly monotone with respect to this order, i.e.~$\xi \leq \xi'$ implies $\ell(\xi) \leq \ell(\xi')$. Fix $v= c \varepsilon$ with $c$ small enough so that the conclusions of \eqref{eq:exc3} hold and $\frac{1+4v}{1-v} \leq 1+ c\varepsilon$. Let $n=  \mathcal{N}^{u_*}$. With these choices for $v$ and $n$, if the event $U_n^v$ appearing in \eqref{eq:exc3} and the event on the left-hand side of \eqref{eq:N-conc1} jointly occur (under $Q_B$), one deduces using $\mathcal{N}^{u_*+\frac{1}{2}\varepsilon} \geq \mathcal{N}^{u_*} $ when applying \eqref{eq:exc3} that
 $$
 \{Z_1, \ldots, Z_{{\mathcal{N}}^{u_*+\frac{1}{2}\varepsilon}}\} \stackrel{\eqref{eq:exc3}}{\subset} 
  \{\widetilde{Z}_1, \ldots, \widetilde{Z}_{(1+ c\varepsilon) {\mathcal{N}}^{u_*+\frac{1}{2}\varepsilon} }\} \stackrel{\eqref{eq:N-conc1}}{\subset} \{\widetilde{Z}_1, \ldots, \widetilde{Z}_{\widetilde{\mathcal{N}}^v}\}.
 $$
 With a view towards \eqref{eq:coup-41}, this yields that $\xi^{u_*+\frac{1}{2}\varepsilon} \leq 
 \tilde\xi^v$ and hence $\ell_x^{u_*+\frac{1}{2}\varepsilon}= 
 \ell_x(\xi^{u_*+\frac{1}{2}\varepsilon}) \leq {\ell}_x(\tilde\xi^v)=\tilde\ell_x^v$ for all $x \in B$ 
 by monotonicity. The other inequality $\tilde\ell^v_x \leq \ell^{u_*+\frac{3}{2}\varepsilon}_x$ 
 inherent to the event in \eqref{eq:coupling-0} is obtained similarly, now exploiting 
 \eqref{eq:exc3} together with the second inequality in \eqref{eq:N-conc1}. To conclude the 
 proof of \eqref{eq:coupling-0} it thus suffices to combine the bound on the event in 
 \eqref{eq:exc3}, together with a suitable estimate on $Q_B[U_{n= \mathcal{N}^{u_*} 
 }^{c\varepsilon}]$. The latter is obtained by combining \eqref{eq:exc3}, first for $|x| \geq 
C(\varepsilon)$ but eventually for all $x$ by possibly adapting the constant $\tilde{c}$, and 
the fact (see, for instance, below \eqref{eq:conc-11}) that $Q_B[ \mathcal{N}^{u_*} \geq 
\frac12u_* \text{cap}(B)] \geq 1 -e^{-c \cdot \text{cap}(B)}$.
 
 The proof of item \textit{ii)} in Proposition~\ref{P:coupling} follows a similar reasoning as that of item \textit{i)}, now combining \eqref{eq:exc3} and \eqref{eq:N-conc2} to deduce \eqref{eq:coupling-1}.
\end{proof}

\section{Tilted harmonic measure}
\label{sec:tilt-hm}
In this section we prove Lemmas~\ref{L:tilt-compa} and~\ref{L:N-conc}, which concern the 
tilted random walk measure $\widetilde{P}_x^f$ introduced in \eqref{eq:tiltedPathDef}. The 
function $f$ is defined in \eqref{eq:f1hatdef} and implicitly depends on parameters $u,v$ and $\varepsilon$, as well as on $x\in \mathbb{Z}^d$ and $\delta \in 
(0,\frac16)$, which determine the underlying (oblique) regions $T^i$, cf.~\eqref{eq:T_i} and 
\eqref{eq:f1hatdef}. In order to keep notation reasonable, whenever the parameters $\delta, 
u,v,\varepsilon$ are not further specified below, it is tacitly assumed that the conclusions 
hold for all $\delta \in (0,\frac16)$, $\varepsilon \in (0,\frac{u_*}{10} \wedge 1)$, and all $0 < 
v \le u$. Recall that, when working with $f$ as in \eqref{eq:f1hatdef}, we only have to deal with boxes $B = B_1^z$ having centers $z \in \Gamma = \Gamma(x)$ (see 
above Proposition~\ref{P:coupling} and Lemma~\ref{L:tilt-compa} for notation).

\smallskip
The following result will be key.
\begin{lemma} \label{lem:tilted_beta_bound}
For $f$ as in \eqref{eq:f1hatdef}, 
\begin{align}\label{eq:beta-0}
\beta^{f}(x) \stackrel{\textnormal{def.}}{=} \sup_{i=1,2} \sup_{z \in \Gamma(x), \, y \in \partial B_{i+1}^z} \widetilde{P}_y^f[H_{B_i^z} < \infty] \to 0 \text{ as } |x| \rightarrow \infty.
\end{align}
\end{lemma}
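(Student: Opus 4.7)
The plan is to recognise the tilted walk under $\widetilde{P}^f$ as a uniformly elliptic random walk on $\mathbb{Z}^d$ and to deduce the lemma from standard potential-theoretic estimates. First, by the maximum principle applied to the bounded harmonic functions $h_1,h_2$ entering \eqref{eq:f1hatdef}, the profile $f$ takes values in the convex hull of the three constants appearing in \eqref{eq:chosenf-2}, whence $0 < a \leq f \leq b < \infty$ with $a,b$ depending only on $u,v,\varepsilon$ (and not on $x$ or $\delta$). Consequently, the tilted jump rates $\tilde q(x,x+e)=\frac{1}{2d}f(x+e)/f(x)$ lie in a fixed compact subinterval of $(0,\infty)$ and the reversible measure $\tilde m=f^2$ satisfies $a^2\le \tilde m\le b^2$.

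Next, I would collect two comparison estimates. From the identity $\tilde{\mathcal E}(h,h)=\frac{1}{2}\sum_{x\sim y}\frac{1}{2d}f(x)f(y)(h(x)-h(y))^2 \le b^2\mathcal E(h,h)$, with $\mathcal E$ the Dirichlet form \eqref{eq:Dirichlet}, together with the variational characterisation of capacity, one immediately gets $\widetilde{\mathrm{cap}}(K)\le b^2\mathrm{cap}(K)$ for every finite $K$; combined with the classical bound $\mathrm{cap}(B_r)\le Cr^{d-2}$ in $d\ge 3$, this yields $\widetilde{\mathrm{cap}}(B_i^z)\le Cr_1^{(d-2)(i+2)/8}$. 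The second, more subtle input is the pointwise Green's function estimate $\tilde g(x,y)\le C|x-y|^{-(d-2)}$ for $x\ne y$, with $C=C(a,b,d)$, which is the standard consequence of Gaussian heat-kernel bounds for uniformly elliptic random walks on $\mathbb{Z}^d$ (of Delmotte type). Alternatively, it admits a self-contained derivation via the martingale $M_t=e^{\int_0^tV_f(X_s)\,ds}f(X_t)/f(x)$ under $P_x$ (arising from the Feynman--Kac representation $-\Delta f=V_f f$), which yields the explicit formula
\begin{equation*}
\tilde g(x,y)=\frac{1}{f(x)f(y)}\int_0^\infty E_x\big[e^{\int_0^tV_f(X_s)\,ds}\mathbf 1\{X_t=y\}\big]\,dt,
\end{equation*}
from which the bound follows using the boundedness of $V_f$ on its finite support together with the classical $\mathbb{Z}^d$ heat-kernel estimate.

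With these two inputs in hand, the conclusion is immediate. Observing that $|y-x'|\ge r_1^{(i+3)/8}-r_1^{(i+2)/8}\ge \frac{1}{2}r_1^{(i+3)/8}$ for all $y\in\partial B_{i+1}^z$ and $x'\in B_i^z$ once $r_1$ is sufficiently large, the tilted analogue of the last-exit formula \eqref{eq:last-exit} (with $U=\mathbb{Z}^d$, so that $\tilde T_U=\infty$) gives
\begin{equation*}
\widetilde P_y^f[H_{B_i^z}<\infty]=\sum_{x'\in B_i^z}\tilde g(y,x')\,\tilde e_{B_i^z}(x') \le \sup_{x'\in B_i^z}\tilde g(y,x')\cdot \widetilde{\mathrm{cap}}(B_i^z)\le C r_1^{-(d-2)/8}.
\end{equation*}
Since this bound is uniform in $z\in\Gamma(x)$, $y\in\partial B_{i+1}^z$ and $i\in\{1,2\}$, and since $r_1(x)\to\infty$ as $|x|\to\infty$ by \eqref{eq:LB-final-1}, the claim $\beta^f(x)\to 0$ follows. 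The main technical point—indeed the only real obstacle—is justifying the Green's function estimate: although standard for uniformly elliptic chains on $\mathbb{Z}^d$, some care is required because the tilted walk has both inhomogeneous jump rates and an inhomogeneous reversible measure, and either of the two routes sketched above handles this.
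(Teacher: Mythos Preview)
Your main route---observing that the conductances $c(x,y)=\tfrac{1}{2d}f(x)f(y)$ are uniformly elliptic with constants depending only on $u,v,\varepsilon$, invoking Delmotte-type Gaussian heat-kernel bounds to obtain $\tilde g(y,x')\le C|y-x'|^{2-d}$, and combining with the Dirichlet-form capacity comparison via the tilted last-exit formula---is correct and gives the lemma with an explicit rate $Cr_1^{-(d-2)/8}$. This is genuinely different from the paper's proof, which avoids any global Green's-function estimate for the tilted walk. Instead the paper exploits that $f$ is \emph{constant} on $B_4^z$ (so the tilted and untilted walks coincide until $T_{B_4^z}$), whence $\widetilde P_y^f[H_{B_i}<T_{B_4}]=P_y[H_{B_i}<T_{B_4}]\le r_1^{-c}$ by classical estimates; it then closes the argument by showing $\inf_{y'\in\partial^{\rm out}B_4}\widetilde P_{y'}^f[H_{B_3}=\infty]\ge c$, which follows once one proves $E_y[\int_0^\infty V_f^-(X_s)\,ds]\le C$ by an explicit computation on the support of $V_f^-$. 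Your route is shorter and quantitative but imports nontrivial heat-kernel machinery; the paper's is self-contained and extracts structural information about $V_f$ that is reused nowhere else, so your simplification is a genuine gain.

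Your ``alternative'' derivation of the Green's-function bound, however, has a gap. The Feynman--Kac identity is correct, but the sentence ``the bound follows using the boundedness of $V_f$ on its finite support'' does not: the support of $V_f$ (namely $\partial T^2\cup\partial^{\rm out}T^3\cup\partial T^6\cup\partial^{\rm out}U$) has cardinality growing polynomially in $|x|$, and on parts of it $V_f>0$, so $e^{\int_0^t V_f(X_s)\,ds}$ is not a priori bounded uniformly in $|x|$ and $t$. Turning this into a proof would require precisely the kind of occupation-time control the paper carries out for $V_f^-$, plus a matching argument for $V_f^+$; at that point you are no longer saving work over either the paper's proof or the Delmotte route. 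I would drop this alternative and keep only the uniformly-elliptic argument.
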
 
\begin{proof} 
For simplicity we assume that $i=2$ in the sequel. The other case is treated in the same way. 
Let $f$ be as in \eqref{eq:f1hatdef}. Recall from \eqref{eq:FdefonW+} that $V_f= -\Delta f/f$. We will prove with $y$ ranging over 
$\bigcup_{z\in \Gamma} \partial^{\text{out}} B_4^z$ below (noting that this range depends 
on $x \in \Z^d$ as $\Gamma = \Gamma(x)$) that
\begin{equation}\label{eq:beta-1}
\inf_{x,y} P_y[I_f(\alpha)] \stackrel{\alpha \to \infty}{\longrightarrow} 1 , \text{ where } I_{f}(\alpha) \stackrel{\text{def.}}{=} \Big\{  \int_0^{\infty} V_f(X_s) \mathrm{d}s \geq -\alpha \Big\}.
\end{equation}
We start by explaining how \eqref{eq:beta-1} yields the claim. To this effect, first note that uniformly in $x \in \Z^d$, $z \in \Gamma(x)$ and $y \in \partial^{\text{out}} B_4^z$, abbreviating $B_i = B_i^z$, one has by \eqref{eq:B_i} (cf.~also \eqref{eq:LB-final-1} regarding $r_1$) that $P_{y}[\widetilde{H}_{B_3}=\infty] \geq \Cl[c]{c:escape-1}$. Applying \eqref{eq:beta-1} we then fix $\alpha$ large enough such that $P_y[I_f(\alpha)] \geq 1- \frac{\Cr{c:escape-1}}{2}$. 
Using that $f \leq C(u,v, \varepsilon)$, as can be seen by inspection of \eqref{eq:f1hatdef}, whence $$\frac{d\widetilde{P}_y^f}{dP_y} 1_{I_{f}(\alpha)} \geq \Cl[c]{c:escape-2}(u,v,\varepsilon),$$ it thus follows that (uniformly in $x,y$ as above)
\begin{multline}
\label{eq:beta-2}
\widetilde{P}_{y}^f[{H}_{B_3}=\infty] \geq 
\widetilde{P}_y^f[{H}_{B_3} = \infty, \, I_f(\alpha)]  \\
\geq \Cr{c:escape-2} ({P}_y[{H}_{B_3} = \infty] -P_y [I_f(\alpha)^c] )  \geq  {\Cr{c:escape-1}\Cr{c:escape-2}}/{2} \, {\stackrel{
\text{def.}}=\Cl[c]{c:escape-yes}.}
\end{multline} 
Now to \eqref{eq:beta-0}. Let ${\beta}^{f}_{0}(\cdot)$ be defined as ${\beta}^{f}(\cdot)$ in \eqref{eq:beta-0} but with $\infty$ replaced by $T_{B_4^z}$, the exit time from $B_4^z$. We first argue that
\begin{align}\label{eq:beta-3}
\beta^{f}_0(x) \to 0 \text{ as } |x| \rightarrow \infty.
\end{align}
To see this, first observe that, prior to exiting $B_4=B_4^z$, the law of the tilted random 
walk is identical to that of the simple random walk (indeed $B_4 \subset T^1 \cup 
(T^5\setminus T^3)$ whenever $z\in \Gamma$, and $f$ is constant on this set; 
cf.~\eqref{eq:chosenf-2}). In particular this implies for $y$ as in \eqref{eq:beta-0} that 
$\widetilde{P}_y^f[H_{B_2} < T_{B_4}]= {P}_y[H_{B_2} < T_{B_4}] $ and the latter is 
bounded by ${P}_y[H_{B_2} < \infty] \leq r_1^{-c}$ using standard arguments and the 
choice of boxes in \eqref{eq:B_i}. All in all, \eqref{eq:beta-3} thus follows. 

Consider now the quantity $\widetilde{P}_y^f[T_{B_4} \leq H_{B_2} < \infty]$, for $y \in \partial B_3$. Applying the strong Markov property at times $T_{B_4}$ and $H_{B_3} \circ \theta_{T_{B_4}}$, where $\theta_t$ denotes the canonical shift by $t>0$, it readily follows that
$$
\widetilde{P}_y^f[T_{B_4} \leq H_{B_2} < \infty] \leq \sup_v \widetilde{P}_{v}^f[{H}_{B_3}<\infty] \cdot \bigg( \beta_0^f(x) + \sup_{w} \widetilde{P}_{w}^f[T_{B_4} \leq H_{B_2} < \infty] \bigg),
$$
with the supremum over $v$ ranging over $\partial^{\text{out}} B_4$ and $w$ over $\partial B_3$. Taking a supremum over $y \in \partial B_3 $ on the left-hand side, using 
\eqref{eq:beta-2} to bound $\sup_v \widetilde{P}_{v}^f[{H}_{B_3}<\infty] \leq 1- 
\Cr{c:escape-yes}$ and collecting terms proportional to 
$\sup_{w} \widetilde{P}_{w}^f[T_{B_4} \leq H_{B_2} < \infty]$ yields that
\begin{equation} \label{eq:beta-4}
\sup_{w} \widetilde{P}_{w}^f[T_{B_4} \leq H_{B_2} < \infty] \leq \Cr{c:escape-yes}^{-1}(1- \Cr{c:escape-yes})\beta_0^f(x).
\end{equation}
The claim \eqref{eq:beta-0} now follows from \eqref{eq:beta-3} and \eqref{eq:beta-4}.

\medskip

It thus remains to prove \eqref{eq:beta-1}. For real-valued $V$ let $V^-=(-V) \vee 0$. In the rest of the proof constants $c,C,\dots$ may freely depend on all of $u,v$ and $\varepsilon$ (as entering the definition of $f$ in \eqref{eq:f1hatdef}). We will show that for all  $x \in \Z^d$ and $y \in \bigcup_{z\in \Gamma} \partial^{\text{out}} B_4^z$,
\begin{equation}\label{eq:beta-5}
E_y\Big[\int_0^{\infty} V_f^-(X_s) ds\Big] \leq C.
\end{equation}

We first seek to simplify the expectation in \eqref{eq:beta-5} insofar as possible. Recalling the form of $f$ from \eqref{eq:f1hatdef}, see also \eqref{eq:chosenf-2} and 
exploiting the fact that $f$ is harmonic at $z \in \mathbb{Z}^d$ unless $z \in K$, where $K= \partial T^2 \cup \partial^{\text{out}} T^3 \cup \partial T^6 \cup \partial^{\text{out}} U,$ it follows that 
$V_{f}(z)$ vanishes (thus also $V_{f}^-(z)$) unless $z$ belongs to this set, whence
\begin{equation}
	E_y\Big[\int_0^{\infty} V_{f}^-(X_s) ds\Big] = E_y\Big[\int_0^{\infty} V_{f}^-(X_s) 1\{X_s \in K\}ds\Big] 
	= \sum_{z\in K} V_{f}^-(z) g(y,z). \label{eq:intVboundproof} \end{equation}
Abbreviating $\Cl[c]{c:f_0}= 1 - ({\frac{u_*-\varepsilon}{u}})^{\frac12} \in (0,1)$, 
$\Cl[c]{c:f_1-1}= ({\frac{u_{*}+\varepsilon}{v}})^{1/2} -1$ and $\Cl[c]{c:f_1-2}= \Cr{c:f_0}+  \Cr{c:f_1-1} $, so that $f = 1 + \Cr{c:f_1-1} h_2 - \Cr{c:f_1-2} h_1$, we have that
\begin{align} \label{eq:beta-6}
V_{f}(z) = - \frac{\Delta f(z)}{f(z)} = \frac{\Cr{c:f_1-2}\Delta h_1(z) - \Cr{c:f_1-1} \Delta h_2(z)}{1 + \Cr{c:f_1-1} h_2(z) - \Cr{c:f_1-2} h_1(z)}, \ z \in \Z^d.
\end{align}
 Recalling that $h_1(z) = P_z[H_{T^2} < T_{T^3}]$, $h_2(z) = P_z[H_{T^6} < T_{U}]$, it follows that for $z \in \partial^{\text{out}} T^3$, we have $h_2(z) = 1, \Delta h_2(z) = 0, 
 h_1(z) = 0$, so that $V_{f}(z) \geq 0$ by inspection of \eqref{eq:beta-6}, whence 
 $V_{f}^-(z)=0$. The same fate applies to $z \in \partial T^6$, using now that $h_1(z)=0, 
 \Delta h_1(z) = 0, h_2(z) = 1$. Thus, \eqref{eq:intVboundproof} boils down to
 \begin{equation}
	E_y\Big[\int_0^{\infty} V_{f}^-(X_s) ds\Big]	= \sum_{z\in \partial T^2 \cup  \partial^{\text{out}} U} V_{f}^-(z) g(y,z). \label{eq:intVboundproof'} \end{equation}
 Meanwhile, $h_1(z) = h_2(z) = 1$ and $\Delta h_2(z) = 0$ whenever $z \in \partial T^2$ and $|x| > C(\delta)$ (see below \eqref{eq:f1hatdef}), hence \eqref{eq:beta-6} yields for 
 such $z$ that
 \begin{equation}
 V_{f}^-(z) = -V_{f}(z) = \frac{\Cr{c:f_1-2}}{1-\Cr{c:f_0}} \frac1{2d}\sum_{z'\sim z} P_{z'}[H_{T^2} \geq  T_{T^3}] = \frac{\Cr{c:f_1-2}}{1-\Cr{c:f_0}} P_{z}[\widetilde{H}_{T^2} \geq  T_{T^3}], 
 \label{eq:stepsBoundingVfromBelow1}\end{equation}
using the simple Markov property in the last step. By choice of $T^3$ above \eqref{eq:T_i-U}, one readily infers via Proposition~\ref{L:hit} that $\inf_{\xi \notin T^3 }P_{\xi}[H_{T^2} = \infty] \geq c$, and applying the strong Markov property (or appealing directly to \eqref{eq:cap-10}), this is straightforwardly seen to imply that $P_{z}[\widetilde{H}_{T^2} \geq  T_{T^3}] \leq c^{-1} e_{T^2}(z)$ for all $z \in \partial T^2$, which in turn yields that
\begin{equation}
\label{eq:expec-bdI}
 \sum_{z\in \partial T^2 } V_{f}^-(z) g(y,z) \leq C \sum_{z\in \partial T^2 }  g(y,z) e_{T^2}(z) \stackrel{\eqref{eq:last-exit}}{\leq} C.
\end{equation}

Going back to \eqref{eq:beta-6}, for $z \in \partial^{\text{out}} U$, we have $h_1(z)=0, \Delta h_1(z) = 0, h_2(z) = 0$, so that
\begin{align} \label{eq:beta-7}
V_{f}^-(z) = \frac{\Cr{c:f_1-2}}{2d}\sum_{z'\sim z} P_{z'}[H_{T^6} < T_{U}] = \Cr{c:f_1-2} P_z[H_{T^6} < \widetilde{H}_{U^c}].
\end{align}
By construction, see \eqref{eq:T_i} and \eqref{eq:T_i-U}, $T^6 \subset U/2$, where with 
hopefully obvious notation $U/2$ is the box concentric to $U$ with half the radius. Thus 
bounding $P_z[H_{T^6} < \widetilde{H}_{U^c}] \leq P_z[H_{U/2} < \widetilde{H}_{U^c}]$ and 
by considering the projection of the walk in the direction orthogonal to the face to which $z \in 
\partial^{\text{out}} U$ belongs, a straightforward Gambler's ruin argument readily yields that 
$P_z[H_{T^6} < \widetilde{H}_{U^c}] \leq C|x|^{-1}$, 
hence,
 \begin{align}
\sum_{z \in \partial^{\text{out}} U} V_{f}^-(z) g(y,z) \leq \frac{C}{|x|} \sum_{z \in \partial^{\text{out}} U} g(y,z)  \leq \frac{C'}{|x|} |{\partial^{\text{out}} U}| |x|^{2-d} \leq C'',
\label{eq:VUNapprox}\end{align}
where we have used that $|y-z| \geq c|x|$ and a standard estimate on the Green's function. 

Overall, feeding \eqref{eq:expec-bdI} and \eqref{eq:VUNapprox} into \eqref{eq:intVboundproof'} completes the verification of \eqref{eq:beta-5} for all $f$ as in \eqref{eq:f1hatdef}, and with it the verification of 
\eqref{eq:beta-1}, thus concluding the proof.
\end{proof}

As an immediate consequence of Lemma~\ref{lem:tilted_beta_bound}, we obtain the following comparison between tilted and untilted entrance laws. Extending the notation $h_{B}(\cdot,\cdot)$, $\tilde h_{B}(\cdot,\cdot)$, from \eqref{eq:h_B}, we define for $B \subset B'\subset \Z^d$ the quantity $\tilde{h}_{B,B'}(x,y) = \widetilde{P}_x^f[H_B < T_{B'}, X_{H_B} = y]$ for $x,y \in \Z^d$, and similarly ${h}_{B,B'}(x,y)$ with $P_x$ in place of $\widetilde{P}_x^f$. Thus, $\tilde{h}_{B}= \tilde{h}_{B,\Z^d}$. We will sometimes write $\tilde{h}_{B,B'}^f= \tilde{h}_{B,B'}$ or $\tilde{h}_{B}^f= \tilde{h}_{B}$ to insist on the dependence on $f$.

\begin{corollary} \label{lem:equil_potential_comparisons_collated} For all $f$ as in \eqref{eq:f1hatdef}, $\epsilon_0 > 0$, $\delta \in (0,\frac16)$, $|x| \geq C(\epsilon_0,\delta)$, $z \in \Gamma(x)$, $y \in \partial B_2^z$ and $y'\in  \Z^d$, abbreviating $B_i=B_i^z$, 
\begin{align}
h_{B_1,B_3}(y,y') \leq \tilde{h}_{B_1}^f(y,y') \leq (1+\epsilon_0)\min_{\tilde y \in \partial B_2} h_{B_1,B_3}(\tilde y,y').
\label{eq:equil_comparison_new}
\end{align}
\end{corollary}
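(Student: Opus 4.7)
The proof will crucially exploit the fact that $f$ is constant on a one-neighbourhood of $B_4 \equiv B_4^z$, so that $V_f \equiv 0$ on $B_4$ and, for $y \in B_4$, the tilted walk $\widetilde{P}_y^f$ and the SRW $P_y$ induce the same law on $\mathcal{F}_{H_{B_1} \wedge T_{B_4}}$; indeed the Doob-transform factor $\frac{f(X_\tau)}{f(y)} \exp(\int_0^\tau V_f(X_s) \mathrm{d}s)$ equals $1$ at $\tau = H_{B_1} \wedge T_{B_4}$ on the event $\{H_{B_1} < T_{B_4}\}$. This constancy of $f$ near $B_4$ follows from \eqref{eq:chosenf-2} and the scale separations built into \eqref{eq:T_i}--\eqref{eq:B_i}: for $|x| \ge C(\delta)$, $B_4$ sits well inside $T^2$ (if $z \in \Gamma_{\textnormal{int}}$) or $T^6 \setminus T^3$ (if $z \in \Gamma_{\textnormal{ext}}$). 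Since $B_3 \subset B_4$, the left inequality is then immediate:
\[
\tilde h_{B_1}^f(y,y') \ge \widetilde{P}_y^f\big[H_{B_1} < T_{B_3}, X_{H_{B_1}} = y' \big] = P_y\big[H_{B_1} < T_{B_3}, X_{H_{B_1}} = y' \big] = h_{B_1,B_3}(y,y').
\]

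For the right inequality, I would decompose
\[
\tilde h_{B_1}^f(y,y') = P_y\big[H_{B_1} < T_{B_4}, X_{H_{B_1}} = y' \big] + \widetilde{E}_y^f\big[\mathbf{1}_{T_{B_4} < H_{B_1}}\, \tilde h_{B_1}^f(X_{T_{B_4}}, y') \big],
\]
invoking the coincidence above for the first summand and the strong Markov at $T_{B_4}$ for the second. For $w \in \partial^{\textnormal{out}} B_4$, the strong Markov at $H_{B_2}$ combined with the $i=2$ case of Lemma~\ref{lem:tilted_beta_bound} (after a preliminary reduction from $\partial^{\textnormal{out}} B_4$ to $\partial B_3$ via the strong Markov at $H_{B_3}$) gives $\tilde h_{B_1}^f(w,y') \le \beta^f(x)\, M(y')$ with $M(y') := \max_{\tilde y \in \partial B_2} \tilde h_{B_1}^f(\tilde y, y')$. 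Taking the maximum over $y \in \partial B_2$ of the display above and rearranging yields $M(y') \le M^\ast(y')/(1 - \beta^f(x))$ for $M^\ast(y') := \max_{y \in \partial B_2} P_y[H_{B_1} < T_{B_4}, X_{H_{B_1}} = y']$. A parallel SRW-decomposition of $\{H_{B_1} < T_{B_4}\}$ based on the position of $H_{B_1}$ relative to $T_{B_3}$, combined with a standard hitting-probability estimate $\sup_{w \in \partial^{\textnormal{out}} B_3} P_w[H_{B_2} < T_{B_4}] \le C r_1^{-c}$ on $\Z^3$ in the spirit of Lemma~\ref{L:cap}, then converts $M^\ast(y')$ into $\tilde M(y') := \max_{\tilde y \in \partial B_2} h_{B_1,B_3}(\tilde y, y')$ at the cost of a factor $1 + o_{|x|}(1)$, giving $\tilde h_{B_1}^f(y,y') \le (1 + o_{|x|}(1))\, \tilde M(y')$.

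The main obstacle will be the final passage from $\tilde M(y')$ to $\min_{\tilde y \in \partial B_2} h_{B_1,B_3}(\tilde y, y')$, which amounts to the quantitative claim that the SRW-harmonic function $\tilde y \mapsto h_{B_1,B_3}(\tilde y, y')$ has oscillation $o_{|x|}(1)$ on $\partial B_2$. The cleanest route I can see is to establish a Martin-kernel factorisation of the form
\[
h_{B_1,B_3}(\tilde y, y') = P_{\tilde y}[H_{B_1} < T_{B_3}]\, \bar e_{B_1}(y')\, \big(1 + o_{|x|}(1)\big), \qquad \bar e_{B_1} := e_{B_1}/\mathrm{cap}(B_1),
\]
valid uniformly in $\tilde y \in \partial B_2$ and $y' \in \partial B_1$. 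This factorisation decouples entrance location from starting point to leading order, resting on the classical fact that SRW hits a compact set from far according to its normalised equilibrium measure, together with $P_{\tilde y}[H_{B_1} < T_{B_3}] = P_{\tilde y}[H_{B_1} < \infty](1 - o_{|x|}(1))$ by the scale gap $r_1^{3/8} \ll r_1^{5/8}$. The remaining factor $P_{\tilde y}[H_{B_1} < T_{B_3}]$ is in turn approximately constant on $\partial B_2$ by the radial symmetry of the leading Green's-function asymptotics on $\Z^3$ (cf.~\eqref{eq:c_2}). Combining this near-constancy with the bound of the preceding paragraph completes the proof for $|x| \ge C(\epsilon_0, \delta)$.
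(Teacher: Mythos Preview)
Your proof is correct and follows essentially the same strategy as the paper's: both establish the lower bound via the coincidence of tilted and untilted law up to $T_{B_4}$, and the upper bound via a strong-Markov renewal argument controlled by $\beta^f(x)$ from Lemma~\ref{lem:tilted_beta_bound}, followed by a max-to-min comparison for $\tilde y \mapsto h_{B_1,B_3}(\tilde y,y')$ on $\partial B_2$. Two cosmetic differences: the paper decomposes directly at $T_{B_3}$ (so that $\tilde h_{B_1,B_3}^f = h_{B_1,B_3}$ appears immediately, avoiding your intermediate $M^\ast$-to-$\tilde M$ step), and for the max/min oscillation it simply cites \cite[Lemma~3.5]{zbMATH06257634} rather than spelling out the Martin-kernel factorisation via \cite[Theorem~2.1.3]{Law91} and the near-constancy of $P_{\tilde y}[H_{B_1}<T_{B_3}]$ on $\partial B_2$ as you do.
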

\begin{proof}
The first bound in \eqref{eq:equil_comparison_new} is immediate since $\tilde{h}_{B_1}^f(y,y') \geq \tilde{h}_{B_1,B_3}^f(y,y') = {h}_{B_1,B_3}(y,y') $ for $y \in \partial B_2^z$, using that $X_{\cdot \wedge T_{B_4}}$ has the same law under $P_y$ and $\widetilde{P}_y^f$, cf.~below \eqref{eq:beta-3}. For the second bound in \eqref{eq:equil_comparison_new}, applying a similar reasoning as used to deduce \eqref{eq:beta-4} yields that 
\begin{align}
 \tilde{h}_{B_1}^f(y,y') \leq \frac{1}{1-\beta^f(x)} \max_{\tilde y \in \partial B_2} \tilde{h}_{B_1,B_3}^f(\tilde y,y'),
\end{align}
and the latter quantity equals $\max_{\tilde y} {h}_{B_1,B_3}(\tilde y,y')$. To conclude \eqref{eq:equil_comparison_new}, one applies Lemma~\ref{lem:tilted_beta_bound} and uses the fact that $\max_{\tilde y} {h}_{B_1,B_3}(\tilde y,y') \leq (1+ |x|^{-c\delta})\min_{\tilde y} {h}_{B_1,B_3}(\tilde y,y')$, which follows by similar considerations as e.g.~in \cite[Lemma 3.5]{zbMATH06257634}.
\end{proof}

Corollary~\ref{lem:equil_potential_comparisons_collated} will readily yield the part of Lemma~\ref{L:tilt-compa} concerning $\frac{\tilde{\pi}}{\pi}$. Dealing with $\frac{\tilde{\pi}_0}{\pi_0}$, cf.~\eqref{eq:exc2} requires a control on the tilted equilibrium measure, which is the object of the next lemma. This result will also be needed in the course of proving Lemma~\ref{L:N-conc}. In the sequel it will be convenient to introduce ``effective'' levels, for $f$ as in \eqref{eq:f1hatdef},
\begin{equation} \label{eq:utildefefinition}
\begin{split}
&\tilde{u}_f^z = 
\begin{cases}
u & \text{if } z\in \Gamma_{\text{int}} \\
v & \text{if } z\in \Gamma_{\text{ext}}
\end{cases} \\
&u_f^z = f^2(z)\tilde{u}_f^z \stackrel{\eqref{eq:chosenf-2}}{=} 
\begin{cases}
u_*-\varepsilon & \text{if } z\in \Gamma_{\text{int}} \\
u_*+ \varepsilon & \text{if }  z\in \Gamma_{\text{ext}}.
\end{cases} 
\end{split}
\end{equation} 
Note that \eqref{eq:utildefefinition} defines $\tilde{u}_f^z$ and ${u}_f^z $ for any $z \in \Gamma$ since $\Gamma = \Gamma_{\text{int}}\cup \Gamma_{\text{ext}}$ according to our definition in the previous section.\\

\begin{proposition}\label{P:harm-comp} For all $\epsilon_0 \in (0,1)$, $f$ as in \eqref{eq:f1hatdef}, $x \in \mathbb{Z}^d$, $z \in \Gamma(x)$ and $|x| \geq C(\epsilon_0,\delta)$,
\begin{align}
&(1-\epsilon_0) u_f^z\cdot \textnormal{cap}(B_2^z) \leq \tilde u_f^z \cdot \widetilde{\textnormal{cap}}_f(B_2^z) \leq (1+\epsilon_0) u_f^z \cdot\textnormal{cap}(B_2^z), \text{ and } \label{eq:cap_comparison}\\
&(1-\epsilon_0) u_f^z \cdot e_{B_1^z}(y) \leq \tilde u_f^z \cdot  \tilde e_{B_1^z}(y) \leq (1+\epsilon_0) u_f^z \cdot  e_{B_1^z}(y), \ y \in \Z^d. \label{eq:equi-comparison}
\end{align}
\end{proposition}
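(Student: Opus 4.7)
The plan is to exploit that $f$ is constant on $B_4^z$ and its $1$-neighborhood for every $z \in \Gamma(x)$, provided $|x| \geq C(\delta)$. Indeed, \eqref{eq:chosenf-2} combined with the inclusions $B_4^z \subset T^1 \subset T^2$ (if $z \in \Gamma_{\text{int}}$) and $B_4^z \subset T^5 \setminus T^3 \subset T^6 \setminus T^3$ (if $z \in \Gamma_{\text{ext}}$) recorded above Proposition~\ref{P:coupling} yields $f(y') = f(z)$ for every $y'$ within distance $1$ of $B_4^z$. Consequently, \eqref{eq:eq-tilted} collapses for $y \in B_i^z$ (with $i \in \{1,2\}$) into $\tilde e_{B_i^z}(y) = \widetilde P_y^f[\widetilde H_{B_i^z} = \infty]\, f^2(z)$; combining with $u_f^z = f^2(z)\tilde u_f^z$ from \eqref{eq:utildefefinition} gives
\begin{equation*}
\tilde u_f^z \tilde e_{B_i^z}(y) = u_f^z \, \widetilde P_y^f[\widetilde H_{B_i^z} = \infty], \qquad u_f^z e_{B_i^z}(y) = u_f^z \, P_y[\widetilde H_{B_i^z} = \infty],
\end{equation*}
for $y \in B_i^z$ (both sides vanish otherwise). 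Thus both \eqref{eq:equi-comparison} (taking $i=1$) and \eqref{eq:cap_comparison} (summing the case $i=2$ over $y \in B_2^z$, using $\widetilde{\textnormal{cap}}_f(B_2^z) = \sum_y \tilde e_{B_2^z}(y)$ and similarly for $\textnormal{cap}$) will follow once I prove
\begin{equation}\label{eq:plan-main}
\widetilde P_y^f[\widetilde H_{B_i^z} = \infty] = (1 \pm \epsilon_0)\, P_y[\widetilde H_{B_i^z} = \infty]
\end{equation}
uniformly in $z \in \Gamma(x)$ and $y \in B_i^z$, once $|x| \geq C(\epsilon_0, \delta)$.

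To establish \eqref{eq:plan-main} I decompose using the buffer $B_{i+1}^z$. Since $f$ is constant on its $1$-neighborhood ($\subset B_4^z$), the law of $(X_{t \wedge T_{B_{i+1}^z}})_{t \geq 0}$ coincides under $\widetilde P_y^f$ and $P_y$. Setting $A \stackrel{\textnormal{def.}}{=} P_y[\widetilde H_{B_i^z} > T_{B_{i+1}^z}]$, this yields
\begin{align*}
\widetilde P_y^f[\widetilde H_{B_i^z} = \infty] &= A - \widetilde P_y^f\big[T_{B_{i+1}^z} < \widetilde H_{B_i^z} < \infty\big], \\
P_y[\widetilde H_{B_i^z} = \infty] &= A - P_y\big[T_{B_{i+1}^z} < \widetilde H_{B_i^z} < \infty\big],
\end{align*}
so it suffices to show each return term is at most $(\epsilon_0/3)\cdot A$.

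For the untilted return, a last-exit/Green's function computation (\eqref{eq:last-exit} combined with $\textnormal{cap}(B_i^z) \leq C r_1^{c(i+2)}$ and $d(\partial^{\textnormal{out}} B_{i+1}^z, B_i^z) \geq c r_1^{(i+3)/8}$) gives $\sup_{y' \notin B_{i+1}^z} P_{y'}[H_{B_i^z} < \infty] \leq C r_1^{-c}$, whence applying the strong Markov property at $T_{B_{i+1}^z}$ yields $P_y[T_{B_{i+1}^z} < \widetilde H_{B_i^z} < \infty] \leq C r_1^{-c} A$. For the tilted return, I invoke Lemma~\ref{lem:tilted_beta_bound}: any walk reaching $B_i^z$ from $y' \in \partial^{\textnormal{out}} B_{i+1}^z$ must first enter $B_{i+1}^z$ through $\partial B_{i+1}^z$, so one further application of the strong Markov property at the entrance in $B_{i+1}^z$ gives $\widetilde P_{y'}^f[H_{B_i^z} < \infty] \leq \beta^f(x)$; strong Markov at $T_{B_{i+1}^z}$ combined with the law equality up to this time then yields $\widetilde P_y^f[T_{B_{i+1}^z} < \widetilde H_{B_i^z} < \infty] \leq \beta^f(x)\cdot A$. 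Choosing $|x| \geq C(\epsilon_0,\delta)$ large enough so that $\beta^f(x) + Cr_1^{-c} < \epsilon_0/3$ (by Lemma~\ref{lem:tilted_beta_bound}) completes the proof.

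The only real obstacle is controlling the tilted return probability uniformly over $z \in \Gamma$, but this is precisely the content of Lemma~\ref{lem:tilted_beta_bound}; once its qualitative $\beta^f(x) \to 0$ is granted, the remainder is a clean two-step decomposition exploiting that $f$ is locally constant.
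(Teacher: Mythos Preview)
Your proof is correct and takes a genuinely more direct route than the paper's argument. The paper establishes \eqref{eq:cap_comparison} via the Green's function identity $\tilde u_f^z \sum_{y,y'} \tilde e_{B_2}(y)\tilde g(y,y')f^2(y') = u_f^z \sum_{y,y'} e_{B_2}(y)g(y,y') = u_f^z|B_2|$ together with the asymptotic $\sum_{y'\in B_2} g(y,y') = c|B_2|^{2/d}(1+o(1))$, and then deduces \eqref{eq:equi-comparison} from \eqref{eq:cap_comparison} by combining the sweeping identity with Corollary~\ref{lem:equil_potential_comparisons_collated}. You bypass all of this by observing that since $f$ is constant on a neighborhood of $B_4^z$, the tilted equilibrium measure collapses to $\tilde e_{B_i^z}(y)=f^2(z)\widetilde P_y^f[\widetilde H_{B_i^z}=\infty]$, so both \eqref{eq:cap_comparison} and \eqref{eq:equi-comparison} reduce to a single comparison of escape probabilities; this you settle cleanly with a buffer decomposition through $B_{i+1}^z$, using that the tilted and untilted walks agree up to $T_{B_{i+1}^z}$ and that the tilted return is controlled by $\beta^f(x)$ from Lemma~\ref{lem:tilted_beta_bound}. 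Your approach is shorter and uses only Lemma~\ref{lem:tilted_beta_bound} as input (not Corollary~\ref{lem:equil_potential_comparisons_collated}), whereas the paper's Green's function route would be more robust if $f$ were merely approximately constant on $B_4^z$ rather than exactly so.
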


\begin{proof} A bound similar to the first inequality in \eqref{eq:cap_comparison} was proved in \cite[Proposition 3.1]{zbMATH06257634}, and can be obtained within the present setup via similar arguments, essentially with Lemma~\ref{lem:tilted_beta_bound} now playing the role of \cite[Lemma 3.3]{zbMATH06257634}.

We now focus on the second inequality in \eqref{eq:cap_comparison}, which requires additional arguments. We begin by noting that, with the effective levels defined in \eqref{eq:utildefefinition} and the tilted Green's function $\tilde g$ as introduced in \eqref{eq:g-tilted}, one has the identity
\begin{equation}
\label{eq:cap_comparison-pf1}
\tilde u_f^z \sum_{y, y' \in B_2}\tilde e_{B_2}(y) \tilde g(y,y') f^2(y') =  u_f^z \sum_{y, y' \in B_2} e_{B_2}(y)  g(y,y'),
\end{equation}
where $g=g_{\Z^d}$ is the usual Green's function given by \eqref{eq:g_U}.  To see \eqref{eq:cap_comparison-pf1}, one simply applies \eqref{eq:greenseqidentity} to the left-hand side and notices that $f^2(y')=f^2(z)$ for all $y' \in B_2^z$ and similarly \eqref{eq:last-exit} to the right-hand side, to conclude that both sides equal $u_f^z |B_2|$. Roughly speaking, we aim to argue that the left-hand side is an upper bound for $\tilde u_f^z \cdot \widetilde{\textnormal{cap}}_f(B_2^z) $, while the right-hand side is a lower bound for $u_f^z \cdot\textnormal{cap}(B_2^z)$. The desired inequality will then follow by means of \eqref{eq:cap_comparison-pf1}. 

To this end, using the fact that $\tilde g_{B_3}(y,y')= g_{B_3}(y,y')$ for any $y,y' \in B_3$ we bound
\begin{equation}\label{eq:cap_comparison-pf2}
\begin{split}
&\sum_{y, y' \in B_2}\tilde e_{B_2}(y) \tilde g(y,y') \geq \widetilde{\textnormal{cap}}_f(B_2) 
\inf_{y  \in B_2}\sum_{y'\in B_2} g_{B_3}(y,y') \\
= \, & \widetilde{\textnormal{cap}}_f(B_2) \inf_{y  \in B_2}\sum_{y'\in B_2} \big( g(y,y') - 
E_y\big[g(X_{T_{B_3}},y')\big]\big) \\
\geq \, & \widetilde{\textnormal{cap}}_f(B_2) \inf_{y  \in 
B_2}\sum_{y'\in B_2} \big( g(y,y') - C|x|^{-c\delta} \sup_{ y''  \in B_2} g(y'',y')\big), 
\end{split}
\end{equation} 
for all $|x| \ge C(\delta)$. Using the fact that uniformly in $y \in B_2$,
\begin{equation}\label{eq:cap_comparison-pf3}
\sum_{y' \in B_2} g(y,y') = \Cl[c]{c:g-L1} |B_2|^{\frac2d} (1+ o(1)), \text{ as $|x|\to \infty$},
\end{equation}
for a suitable constant $\Cr{c:g-L1}$, see e.g.~\cite[Lemma 1.1.]{zbMATH06257634} for a proof, one immediately deduces by inserting \eqref{eq:cap_comparison-pf3} into \eqref{eq:cap_comparison-pf2} and using that $f(y')$ is constant in $B_2$ that the left-hand side of \eqref{eq:cap_comparison-pf1} is bounded from below by $\Cr{c:g-L1} \tilde u_f^z f^2(z)\cdot \widetilde{\textnormal{cap}}_f(B_2) |B_2|^{\frac2d} (1-\epsilon_0)$, for all $|x| \geq C(\delta,\epsilon_0)$. Employing \eqref{eq:cap_comparison-pf3} allows to bound the right-hand side of \eqref{eq:cap_comparison-pf1} from above by $\Cr{c:g-L1}  u_f^z \cdot\widetilde{\textnormal{cap}}_f(B_2) |B_2|^{\frac2d} (1+\epsilon_0)$. The claim now follows using \eqref{eq:cap_comparison-pf2} and the fact that $u_f^z = f^2(z)\tilde{u}_f^z$, see \eqref{eq:utildefefinition}. This completes the proof of \eqref{eq:cap_comparison}.

The inequalities \eqref{eq:equi-comparison} are a consequence of \eqref{eq:cap_comparison} and \eqref{eq:equil_comparison_new}, as we now explain. Indeed, to obtain the desired upper bound for $u_f^z \cdot  \tilde e_{B_1}(y)$, applying the sweeping identity (see, e.g.~(1.12) in \cite{MR3417515}) to the sets $B_1 \subset B_2$ yields that for all $|x| \geq C'(\epsilon_0, \delta)$,
\begin{align*}
\tilde e_{B_1}(y) &= \sum_{y' \in \partial B_2} \tilde e_{B_2}(y') \tilde h_{B_1}(y'y) \stackrel{\eqref{eq:equil_comparison_new}}{\leq} (1+ { \epsilon_0}/{2})\widetilde{\textnormal{cap}}_f(B_2) \min_{\tilde y' \in \partial B_2} h_{B_1,B_3}(\tilde y',y) \\ &\stackrel{\eqref{eq:cap_comparison}}{\leq}  (1+ { \epsilon_0}) \frac{ u_f^z}{\tilde u_f^z}\textnormal{cap}(B_2^z)  \min_{\tilde y' \in \partial B_2} h_{B_1}(\tilde y',y)\leq  (1+ { \epsilon_0}) \frac{ u_f^z}{\tilde u_f^z}
\sum_{\tilde y' \in \partial B_2} e_{B_2}(y')h_{B_1}(\tilde y',y) \\
&=  (1+ { \epsilon_0}) \frac{ u_f^z}{\tilde u_f^z} e_{B_1}(y),
\end{align*}
where the last step uses again the sweeping identity. The lower bound on $\tilde e_{B_1}$ in \eqref{eq:equi-comparison} is obtained similarly.
\end{proof}

\begin{proof}[Proof of Lemma~\ref{L:tilt-compa}]
We first deal with the part of \eqref{eq:tilt-compa1} concerning $\frac{\tilde{\pi}_0}{\pi_0}$. Summing \eqref{eq:equi-comparison} over $B_1^z$ and subsequently using this resulting inequality when dividing by the capacity of $B_1^z$ yields an analogue of \eqref{eq:tilt-compa1} regarding normalized (tilted and untilted) equilibrium measures. In view of the definition of $\pi_0$ in \eqref{eq:exc2}, the claim follows.

We now show the part of \eqref{eq:tilt-compa1} concerning $\frac{\tilde\pi}{\pi}$. To this effect, we first observe that, for all $\epsilon_0 > 0$, $\delta \in (0,\frac16)$ and $|x| \geq C(\epsilon_0,\delta)$,
 uniformly in $z \in \Gamma(x)$, $y \in \partial B_2^z$ and $y'\in  \partial B_1^z$,
\begin{align}
(1-\epsilon_0) \bar{e}_{B_1}(y') \leq \frac{\tilde{h}_{B_1}^f(y,y')}{\widetilde{P}_y^f[H_{B_1} <\infty]} \leq (1+\epsilon_0) \bar{e}_{B_1}(y')  \label{eq:tildehB1equilmes}
\end{align}
where $f$ is as in \eqref{eq:f1hatdef}; indeed \eqref{eq:tildehB1equilmes} follows readily from Corollary~\ref{lem:equil_potential_comparisons_collated}, as we now explain. By applying \eqref{eq:equil_comparison_new}, both in its given form and when summing over $y'$, one deduces that the ratio in \eqref{eq:tildehB1equilmes} and its untilted analogue $\frac{{h}_{B_1}(y,y')}{{P}_y[H_{B_1} <\infty]} $ are comparable up to multiplicative errors of order $1+ O(\epsilon_0)$ when $|x| \geq C(\epsilon_0,\delta)$. Then one uses the fact that the untilted analogue of \eqref{eq:tildehB1equilmes}, i.e.~bounding $\frac{{h}_{B_1}(y,y')}{{P}_y[H_{B_1} <\infty]} $ from above and below by $(1\pm\epsilon_0) \bar{e}_{B_1}(y')$ is classically known, see, for example,  \cite[Theorem 2.1.3]{Law91}. Hence, overall, \eqref{eq:tildehB1equilmes} follows.

Now let $\zeta,\zeta' \in \Xi$ be two excursions between $B=B_1^z$ and $U^c$, where $U= B_2^z$, cf.~around \eqref{eq:muB1intro} for notation. Rather than dealing with the ratio $\frac{\tilde\pi(\zeta,\zeta')}{\pi(\zeta,\zeta')}$ directly, we will separately consider $\frac{\tilde\pi(\zeta,\zeta')}{\bar{e}_{B_1} (\zeta_0')}$ and $\frac{\pi(\zeta,\zeta')}{\bar{e}_{B_1} (\zeta_0')}$, with $\zeta_0' \in \partial B_1^z$ denoting the starting point of $\zeta'$. Recalling $\tilde{\pi}$ from \eqref{eq:exc2}, it follows, abbreviating $y = \zeta_n$, $y'=\zeta_0'$ and with the aid of \eqref{eq:tildehB1equilmes} that
\begin{align*}
\frac{\tilde\pi(\zeta,\zeta')}{\bar{e}_{B_1} (y')} &= \frac{\tilde h_{B_1}^f(y, y')}{\bar{e}_{B_1}(y')} + 1-h_{B_1}(y) \leq 1 + \epsilon_0 h_{B_1}(y),
\end{align*}
along with a similar lower bound, implying overall that
\begin{align}\label{eq:pi_final}
\bigg| \frac{\tilde\pi(\zeta,\zeta')}{\bar{e}_{B_1} (y')} -1 \bigg| \leq \epsilon_0 h_{B_1}(y).
\end{align}
The same bound as \eqref{eq:pi_final} is obtained for $\pi$ instead of $\tilde{\pi}$ using the (classical) untilted analogue of \eqref{eq:tildehB1equilmes}, see \cite[Theorem 2.1.3]{Law91}. From \eqref{eq:pi_final} and its version for $\pi$, the desired bound on $|\frac{\tilde{\pi}}{\pi}-1|$ readily follows with $\epsilon_0=1$.
\end{proof}

The rest of this section is geared towards the proof of Lemma~\ref{L:N-conc}, which concerns the random variables $\mathcal{N}^u$ introduced in \eqref{eq:N} and their tilted analogue. We first isolate the following result. Under $P_x$, define the successive return times to $B=B_1^z$ and departure times from $U=B_2^z$ as $R_1= H_B$, and for $k \geq 1$, $D_k= R_k+ T_U\circ R_k$, $R_{k+1}= D_k + H_B \circ D_k$  (assuming $R_k$ is finite, and else set $D_k=R_{k+1}=\infty$). Now let 
\begin{equation}
\label{eq:tau-1}
\tau= \tau_B= \sup\{ k \geq 1: D_k< \infty\}.
\end{equation}
The random variable $\tau$ counts the number of excursions between $B$ and $U^c$ made by the walk.
We write $\tilde\tau$ for its pendant defined under $\widetilde{P}_x^f$. Recall $\tilde\pi_0$ from \eqref{eq:exc2}.

\begin{lemma} \label{L:tau} For all $ \lambda \in (-\infty, c)$, $f$ as in \eqref{eq:f1hatdef}, $x \in \Z^d$, $z \in \Gamma(x)$, with $\tau=\tau_{B_1^z}$,
\begin{equation}\label{eq:tau-2}
e^{\lambda}(1- \beta^{f}(x)) \leq  \widetilde{E}_{\tilde\pi_0}^f\big[e^{\lambda \tau} \big] \leq \frac{e^{\lambda}(1- \beta^{f}(x))}{1-e^{\lambda} \beta^{f}(x)}, \quad \text{(see \eqref{eq:beta-0} for $\beta^{f}(x)$)}
\end{equation}
\end{lemma}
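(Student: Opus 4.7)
My plan is to combine a uniform geometric-type tail bound for $\tau$ with a summation-by-parts identity. The first step is to establish
\[
\widetilde{P}_{\tilde\pi_0}^f[\tau \geq k] \leq \bigl(\beta^f(x)\bigr)^{k-1}, \quad k \geq 1,
\]
by induction on $k$. The case $k=1$ is immediate: $\tilde\pi_0$ is supported on $\partial B_1^z \subset B_1^z$, whence $R_1 = H_{B_1^z} = 0$ and $D_1 = T_{B_2^z} < \infty$ almost surely under $\widetilde{P}_{\tilde\pi_0}^f$, so $\tau \geq 1$. For the inductive step, the strong Markov property at $D_k$ yields, on $\{\tau \geq k\}=\{D_k<\infty\}$,
\[
\widetilde{P}_{\tilde\pi_0}^f\bigl[\tau \geq k+1 \,\big|\, \mathcal{F}_{D_k}\bigr] = \widetilde{P}_{X_{D_k}}^f\bigl[H_{B_1^z} < \infty\bigr].
\]
Since $X_{D_k} \in \partial^{\text{out}} B_2^z$ and any path from there to $B_1^z$ must cross the inner boundary $\partial B_2^z$, an additional Markov step at $H_{B_2^z}$ bounds the right-hand side by $\sup_{y \in \partial B_2^z}\widetilde{P}_y^f[H_{B_1^z} < \infty] \leq \beta^f(x)$, the last inequality being case $i=1$ of \eqref{eq:beta-0}.

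The second step derives a compact expression for $\widetilde{E}_{\tilde\pi_0}^f[e^{\lambda\tau}]$ via Abel summation. Using $\widetilde{P}[\tau=k] = \widetilde{P}[\tau \geq k] - \widetilde{P}[\tau \geq k+1]$ together with $\widetilde{P}[\tau \geq 1]=1$, one obtains the identity
\[
\widetilde{E}_{\tilde\pi_0}^f[e^{\lambda\tau}] = e^\lambda + (e^\lambda - 1)\sum_{k \geq 2} e^{\lambda(k-1)}\,\widetilde{P}_{\tilde\pi_0}^f[\tau \geq k].
\]
Substituting the tail bound of the first step and summing the resulting geometric series---whose convergence is guaranteed by $e^\lambda \beta^f(x) < 1$, itself ensured for $\lambda \in (-\infty, c)$ once $|x|$ is large enough to force $\beta^f(x) < e^{-c}$ via Lemma~\ref{lem:tilted_beta_bound}---produces the upper bound $\frac{e^\lambda(1-\beta^f(x))}{1-e^\lambda\beta^f(x)}$ after an elementary algebraic simplification.

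The lower bound is essentially trivial: since $\tau \geq 1$ almost surely,
\[
\widetilde{E}_{\tilde\pi_0}^f[e^{\lambda\tau}] \geq e^\lambda\,\widetilde{P}_{\tilde\pi_0}^f[\tau = 1] = e^\lambda\bigl(1 - \widetilde{P}_{\tilde\pi_0}^f[\tau \geq 2]\bigr) \geq e^\lambda\bigl(1-\beta^f(x)\bigr)
\]
for every $\lambda \in \R$, using only the $k=2$ case of the tail estimate. The only mildly delicate point in this plan is the control of $\widetilde{P}_{X_{D_k}}^f[H_{B_1^z}<\infty]$ in the inductive step: the definition of $\beta^f(x)$ takes a supremum over the \emph{inner} boundary $\partial B_2^z$, whereas $X_{D_k}$ lives on the \emph{outer} boundary $\partial^{\text{out}} B_2^z$, and this asymmetry is handled by the extra Markov step at $H_{B_2^z}$ indicated above; everything else is bookkeeping with a geometric series.
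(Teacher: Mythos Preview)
Your proof is correct and follows exactly the route the paper sketches: the paper refers to \cite[Lemma~2.7]{MR3602841} for the upper bound (which is precisely your geometric tail bound $\widetilde P[\tau\ge k]\le\beta^{k-1}$ combined with a summation), and gives the identical one-line argument $\widetilde{E}_{\tilde\pi_0}^f[e^{\lambda\tau}] \geq e^\lambda \widetilde{P}_{\tilde\pi_0}^f[\tau=1]$ for the lower bound.

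One small point worth flagging, since it is not explicit in either your argument or the paper's: when you substitute the tail bound into the Abel identity, the resulting inequality is an \emph{upper} bound on the MGF only because the prefactor $e^\lambda-1$ is nonnegative, i.e.\ only for $\lambda\ge 0$. For $\lambda<0$ the same substitution reverses direction and yields instead a lower bound (in fact a sharper one than the stated $e^\lambda(1-\beta^f(x))$). The literal upper bound in \eqref{eq:tau-2} for $\lambda<0$ does not follow from one-sided stochastic domination by a geometric variable and can fail in principle; this is an imprecision in the stated range of $\lambda$ rather than a flaw in your method, and the downstream application in Lemma~\ref{L:N-conc} is unaffected since for the lower-tail Chernoff bound the trivial estimate $\widetilde{E}_{\tilde\pi_0}^f[e^{\lambda\tau}]\le e^\lambda$ (from $\tau\ge 1$) already suffices.
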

\begin{proof}
The upper bound is proved in a similar way as \cite[Lemma 2.7]{MR3602841}. The lower bound is obtained by bounding $\widetilde{E}_{\tilde\pi_0}^f\big[e^{\lambda \tau} \big]  \geq e^{\lambda} \widetilde{P}_{\tilde\pi_0}^f[\tau=1 ] $.
\end{proof}
It remains to give the:

\begin{proof}[Proof of Lemma~\ref{L:N-conc}] In view of \eqref{eq:tau-1}, and by inspection of \eqref{eq:sigma-k}, one observes that if one defines recursively $\hat\sigma_1=1$ and for $ k \geq 1$ $\hat\sigma_{k+1}=\inf \{k \geq  \hat\sigma_k : \sigma_k =1\}$, then $\hat{\sigma}_{k+1}-\hat\sigma_k$ has the same law as $\tau$ under $P_{e_B}$. Moreover, the random variables $\hat{\sigma}_{k+1}-\hat\sigma_k$, $k \geq 1$ are independent and analogous statements hold for tilted quantities. It follows in view of \eqref{eq:N} that for all $v>0$
\begin{equation}
\label{eq:conc-10}
\widetilde{\mathcal{N}}^v \stackrel{\text{law}}{=}\sum_{i=1}^{\widetilde{\Theta}(u)} \tilde{\tau}_i
\end{equation}
where $ \tilde{\tau}_i$, $i \geq 1$, are i.i.d.~with same law as $\tilde{\tau}$ under $\widetilde{P}^f_{\tilde\pi_0}$, where $\tilde\pi_0$ is the normalized tilted equilibrium measure on $B$, cf.~\eqref{eq:exc2}, and $\widetilde{\Theta}(u)$ is an independent Poisson variable with mean $u \cdot \widetilde{\text{cap}}(B)$. A representation similar to \eqref{eq:conc-10} can be derived for ${\mathcal{N}}^u$.

We now focus on \eqref{eq:N-conc1}; the remaining bounds are obtained similarly. Thus let  
$f=f^{v,u; \varepsilon}$, $v=u(1-\eta \sqrt{\delta})$ and $B=B_1^z$ for 
some $z \in \Gamma_{\textnormal{ext}}$. From \eqref{eq:conc-10}, Lemma~\ref{L:tau} and 
Proposition~\ref{P:harm-comp}, we infer that for $\lambda \in (-\infty,c)$, $\epsilon_0 \in (0, 
1)$ and $|x| \geq C(\epsilon_0)$,
\begin{multline}
\label{eq:conc-11}
\log \widetilde{\E}^f \big[e^{\lambda \cdot \widetilde{\mathcal{N}}^v} \big] = v \cdot \widetilde{\text{cap}}(B) \big (  \widetilde{E}_{\tilde\pi_0}^f\big[e^{\lambda \tau} \big] -1 \big) \stackrel{\eqref{eq:tau-2}}{\leq}
v \cdot \widetilde{\text{cap}}(B) \frac{e^{\lambda} -1}{1-e^{\lambda} \beta^{f}(x)}
\\ \stackrel{\eqref{eq:equi-comparison}}{\leq} (u_*+ \varepsilon) \text{cap}(B) \frac{(1+\epsilon_0)(e^{\lambda} -1)}{1-e^{\lambda} \beta^{f}(x)},
\end{multline}
where, in applying \eqref{eq:equi-comparison}, we have summed over $y \in \Z^d$ and used that $v  \cdot \frac{u_f^z}{\tilde{u}_f^z}=  u_*+ \varepsilon $ by \eqref{eq:utildefefinition} (and using that $z \in \Gamma_{\textnormal{ext}}$ with $f$ as above). A lower bound corresponding to \eqref{eq:conc-11} can be derived similarly, along with similar estimates for $\log {\E} [e^{\lambda \cdot {\mathcal{N}}^w}]$, $w > 0$, obtained by  means of an obvious analogue of Lemma~\ref{L:tau} (but no longer requiring Proposition~\ref{P:harm-comp}).  Using that $(e^{\lambda}- 1) \vee (1-e^{-\lambda})  \leq \lambda (1+ C\lambda)$ for $0 \leq \lambda \leq 1$ and applying Chebyshev's inequality separately to $\mathcal{N}^{u_*+\frac{1}{2}\varepsilon}$, $\widetilde{\mathcal{N}}^v$ and $\mathcal{N}^{u_*+\frac{3}{2}\varepsilon}$, selecting in each case $\lambda = \epsilon_0 = c \varepsilon $ in \eqref{eq:conc-11}, while using Lemma~\ref{lem:tilted_beta_bound} to control $\beta^f$ in \eqref{eq:conc-11}, one ensures that with probability at least $1 -e^{-\tilde cr_0^{\tilde c}}$ and for $|x|\geq C(\varepsilon,\delta)$, the inequalities
\begin{align*}
&{\mathcal{N}}^{u_*+\frac{1}{2}\varepsilon} \leq \text{cap}(B) (u_*+ \textstyle \frac{5}{8}\varepsilon), \\
& \text{cap}(B) (u_*+ \textstyle \frac{7}{8}\varepsilon)  \leq  \widetilde{\mathcal{N}}^v \leq  \text{cap}(B) (u_*+ \textstyle \frac{9}{8}\varepsilon), \\
&{\mathcal{N}}^{u_*+\frac{3}{2}\varepsilon} \geq \text{cap}(B) (u_*+ \textstyle \frac{11}{8}\varepsilon)
\end{align*}
all hold. From this  \eqref{eq:N-conc1} immediately follows for sufficiently small choice of $c \in (0,1)$.
\end{proof}

\textbf{Acknowledgments.} We thank Yinfei Zeng and two anonymous referees for their 
careful reading and pertinent comments on a previous version of this article.
This work was supported by EPSRC grant EP/Z000580/1. SG's research is partially 
supported by the SERB grant SRG/2021/000032 and a grant from the Department of 
Atomic Energy, Government of India, under project 12-R\&D-TFR-5.01-0500. The research of PFR is supported by the European Research Council (ERC) under the European Union's Horizon Europe research and innovation program (grant agreement No 101171046). The research of YS was supported by EPSRC Centre for Doctoral Training in Mathematics of Random 
Systems: Analysis, Modelling and Simulation (EP/S023925/1).

{
	\bibliography{biblicomplete}
	\bibliographystyle{abbrv}}
\end{document}